\theoremstyle{definition}
\newtheorem{theorem}{Theorem}[section]
\newtheorem{lemma}[theorem]{Lemma}
\newtheorem{corollary}[theorem]{Corollary}
\newtheorem{proposition}[theorem]{Proposition}
\theoremstyle{definition}
\newtheorem{definition}[theorem]{Definition}
\newtheorem{example}[theorem]{Example}
\theoremstyle{remark}
\newtheorem{remark}[theorem]{Remark}
\numberwithin{equation}{section}
\numberwithin{equation}{section}
\begin{document}

\title{Pretzel Knots and q-Series}

\author{Mohamed Elhamdadi}
\address{Department of Mathematics, University of South Florida, 
Tampa, FL 33647 USA}
\email{emohamed@math.usf.edu }

\author{Mustafa Hajij}
\address{Department of Mathematics, University of South Florida, 
Tampa, FL 33647 USA}
\email{mhajij@usf.edu}




\begin{abstract}
The tail of the colored Jones polynomial of an alternating link is a $q$-series invariant whose first $n$ terms coincide with the first $n$ terms of the $n$-th colored Jones polynomial. Recently, it has been shown that the tail of the colored Jones polynomial of torus knots give rise to Ramanujan type identities. In this paper, we study $q$-series identities coming from the colored Jones polynomial of pretzel knots. We prove a false theta function identity that goes back to Ramanujan and we give a natural generalization of this identity using the tail of the colored Jones polynomial of Pretzel knots. Furthermore, we compute the tail for an infinite family of Pretzel knots and relate it to false theta function-type identities.
\end{abstract}

 \maketitle

 \tableofcontents
\section{Introduction}
The discovery of the Jones polynomial using Von Neumann algebras \cite{J1, J2} and its generalizations \cite{HOMFLY} and \cite{PT} lead to quantum invariants of knots and $3$-manifolds. The Kauffman bracket polynomial \cite{Kauffman} is the simplest interpretation of the Jones polynomial using 
knot diagrams. Reshetikhin and Turaev \cite{RT} gave the first rigorous construction of quantum invariants as linear sums of quantum invariants of framed links.  Soon after, various approaches of constructing quantum invariants were developed using different methods such as using surgery along links \cite{BHMV92, Lic92, TuraevWenzl93} and simplicial complexes \cite{TuraevViro92}.\\
The colored Jones polynomial $J_{n,L}(q)$ of a link $L$ can be understood as a sequence of polynomials with integer coefficients 
that take values in $\mathbb{Z}[q, q^{- 1} ] $. The label $n$ stands for the coloring. The polynomial $J_{2,L}(q)$ is the original Jones polynomial. Recently, there has been a growing interests in the coefficient of the colored Jones polynomial. Dasbach and Lin \cite{DL} used the definition of the colored Jones polynomial coming from Kauffman bracket skein theory to show that for an alternating link $L$ the absolute value of the first and the last three leading coefficients of $J_{n,L}(q)$ are independent of the color $n$, for large values of $n$.  As a consequence, they obtained lower and upper bounds for the volume of the knot complement for an alternating prime non-torus knot $K$ in terms of the leading two and
last two coefficients of $J_{2,K}(q)$ extending their previous result from \cite{DL1}. In \cite{DL} it was conjectured that the first $n$ coefficients of $J_{n,L}(q)$ agree with the first $n$ coefficients of $J_{n+1,L}(q)$ for any alternating link $L$.   This gives rise to a $q$-power series called the tail of the colored Jones polynomial of the alternating link $L$ with many interesting properties. Using skein theory, Armond gave a proof in \cite{Armond}  for the existence of the tail of the colored Jones polynomial of adequate links, hence alternating links and also for closures of positive braids in \cite{Armond1}. Garoufalidis and L\^e \cite{GL} used $R$-matrices to prove the existence of the tail of the colored Jones polynomial of alternating links and proved that higher order stabilization also occur. An alternative proof for the stability was also given in \cite{Hajij3}. In \cite{Hajij1}, the second author investigated certain skein element in the relative Kauffman bracket skein module of the disk with some marked points in order to compute the head and the tail of the colored Jones polynomial obtaining a simple $q$-series for the tail of the knot $8_5$, the first knot in the knot table that is not directly obtained from the work in \cite{CodyOliver}. This investigation was generalized to the study of tail of quantum spin networks in \cite{Hajij2}.
\\
One of the earliest connection between the colored Jones polynomial and Ramanujan type $q$-series was made in \cite{Hikami} in which the author investigated the asymptotic behaviors of the colored Jones polynomials of torus knots. However, the point of view in \cite{Hikami} is different from the point of view of \cite{Hajij1,Hajij2} that we shall adopt here. This point of view allows us to prove more $q$-series identities in a structured manner. Among many interesting properties that the tail of the colored Jones polynomial enjoys as $q$-series is that it is equal to theta functions or false theta functions for many knots with small crossing numbers. For instance all knots in the knots table up to $8_4$, the tail of their colored Jones polynomial are Ramanujan theta, false theta functions or a product of these functions as demonstrated in \cite{CodyOliver}. This does not seem to be the case of knot $8_5$ whose tail is computed in \cite{Hajij1}. More interestingly, the study of the tail has been used to prove  Andrews-Gordon identities for the two variable Ramanujan theta function in \cite{CodyOliver} and a corresponding identities for the false theta function in \cite{Hajij2}. These two families of $q$-series identities were obtained from investigating $(2,p)$-torus knots. For $q$-series techniques proving these identities refer to \cite{Robert}. \\
 In this paper we show that similar observations hold for other natural family of knots, namely Pretzel knots. In particular, we show that pretzel knots give rise to a natural family of $q$-series identities.  
 The paper is organized as follows. In section \ref{sec2} we review the basics of skein theory, some number theory relevant to our work, and some review of the colored Jones polynomial. In section \ref{sec2.5} we list the main results of this paper. Section \ref{sec3} is devoted to Ramanujan type identities that were recovered in the literature using the tail of the colored Jones polynomial and we show how our contribution here fits in this literature. In section \ref{sec4} we give an explicit formula for the tail of colored Jones polynomial of the Pretzel knots $P(2u+1,2,2k+1)$ where $k,u \geq 1$ . In section \ref{sec5} we use two skein theoretic techniques to compute the tail of the colored Jones polynomial of a certain family of pretzel knots and we show that these computations give rise to a Ramanujan type identities. 


\section{Review of Skein Theory and Colored Trivalent Graphs}\label{sec2}
%
%
%
%
%
%
%
%
%
%
%
%
%
\subsection{Skein Theory}
Let $\tilde{\mathbb{Z}}[A,A^{-1}]$ denotes the set of rational functions $\frac{P}{Q}$ where $P,Q \in \mathbb{Z}[A,A^{-1}]$. Let $M$ be an orientable $3$-manifold. A framed link in $M$ is a disjoint
union of oriented annuli embedded into $M.$ Let $\mathcal{L}_M$ be the set of all isotopy classes of framed links in $M$.  We consider the empty link to be an element of $\mathcal{L}_M$. Denote by $\tilde{\mathbb{Z}}[A,A^{-1}] \mathcal{L}_M$ the free $\tilde{\mathbb{Z}}[A,A^{-1}]$-module generated by $\mathcal{L}_M$. Three framed links $L$, $L_{0}$, and $L_{\infty}$ are said to be \textit{Kauffman skein related} if they can be embedded in $M$ identically except in a ball where they appear as in the Figure \ref{kauffmanrelated}
below.

\begin{figure}[H]
  \centering
   {\includegraphics[scale=0.2]{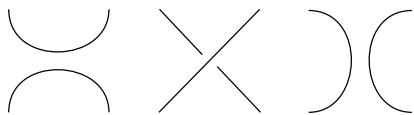}
     \caption{From left to right, $L_{\infty}$, $L$ and $L_{0}$.}
  \label{kauffmanrelated}}
\end{figure}

 If $L$, $L_{0}$ and $L_{\infty}$ are skein related then an expression of the form $L-AL_{0}-A^{-1}L_{\infty}$ is called a \textit{skein relation}. On the other hand, an expression of the form \begin{eqnarray*}\hspace{3 mm} L\sqcup
   \begin{minipage}[h]{0.05\linewidth}
        \vspace{0pt}
        \scalebox{0.02}{\includegraphics{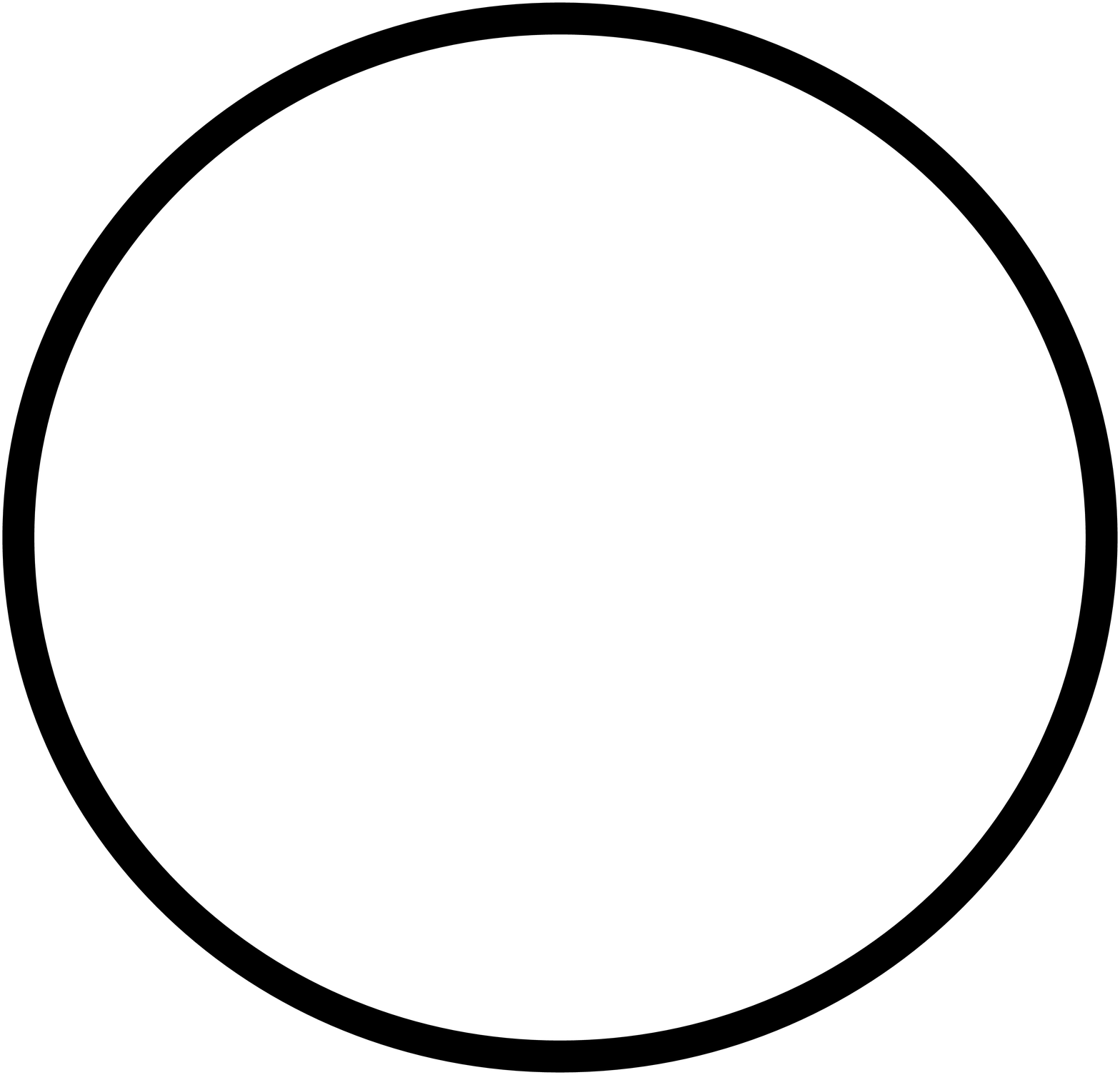}}
   \end{minipage}
  +
  (A^{2}+A^{-2})L,
  \end{eqnarray*} where $L\sqcup$ \begin{minipage}[h]{0.05\linewidth}
        \vspace{0pt}
        \scalebox{0.02}{\includegraphics{simple-circle}}
   \end{minipage} consists of an element $L$ in $\mathcal{L}_M$ and the trivial framed unknot, is called a \textit{weight relation}. Let denote by $R(M)$ the smallest submodule of $\tilde{\mathbb{Z}}[A,A^{-1}]$ that is generated by all possible skein relations and weight relations. The \textit{Kauffman bracket skein module} of $M$ is defined to be the quotient module
$\mathcal{S}(M)=\tilde{\mathbb{Z}}[A,A^{-1}]\mathcal{L}_M/R(M).$
The definition of the Kauffman bracket skein module can be extended to include $3$-manifolds with boundary. In this case we call the resulting module the \textit{relative Kauffman bracket skein module}. More precisely, the definition goes as follows. Specify a finite set of marked points on the boundary of $M.$ A band is
a surface that is homeomorphic to $I\times I$. An element in the set $\mathcal{L}_M$ is an isotopy class of an oriented surface embedded into $M$ and decomposed into a union of finite number of framed links and bands joining the
designated boundary points. The relative Kauffman bracket skein module is the quotient module $\mathcal{S}(M)=\tilde{\mathbb{Z}}[A,A^{-1}]\mathcal{L}_M/R(M).$ In this paper we are interested in
the case when $M$ is $F\times I$ where $F$ is an orientable surface. When this is the case, one thinks of a framed link in $M$ as a link diagram in $F$ with framings
being determined by parallel curves in $F$. The Kauffman bracket skein module of $F \times I$ will be denoted by $\mathcal 
{S}(F).$\\
In this paper we will use the Kauffman bracket skein module of $S^2$. This module is isomophic to $\tilde{\mathbb{Z}}[A,A^{-1}]$. To see this, let $D$ be a diagram in $S^2$. Using the definition of the normalized Kauffman bracket we can write $D=  <D>  \emptyset$ where $\emptyset$ is the empty link. This defines an isomorphism between $\mathcal{S}(S^2)$ and  $\tilde{\mathbb{Z}}[A,A^{-1}]$ induced by sending $D$ to $<D>$. The second module that we will use is the relative skein module $\mathcal{S}(I\times I,2 n)$ of the disk $I\times I$ with $n$ marked points on the top and $n$ points on the bottom. Two diagrams in $\mathcal{S}(I \times I,2n)$ $\ $can be concatenated to produce another diagram in $\mathcal{S}(I \times I, 2n).$ This defines a multiplication on $\mathcal{S}(I \times I,2n)$ that makes this module an associative unital algebra over $\tilde{\mathbb{Z}}[A,A^{-1}]$. This algebra is called the \textit{Temperley-Lieb} algebra and is denoted usually by $TL_n$.

For each $n \geq 1$ there exists a unique idempotent $f^{(n)}\in TL_n$ called the Jones-Wenzl idempotent. We will use a graphical notation for $f^{(n)}$ which is due to Lickorish \cite{Lic92}. In this graphical notation one thinks of $f^{(n)}$ as an empty box with $n$ strands coming in the top and $n$ strands leaving the bottom. The Jones-Wenzl idempotent enjoys a recursive formula that is due to Wenzl \cite{Wenzl}.  The recursive formula can be stated graphically as follows :
\begin{align}
\label{recursive}
  \begin{minipage}[h]{0.05\linewidth}
        \vspace{0pt}
        \scalebox{0.12}{\includegraphics{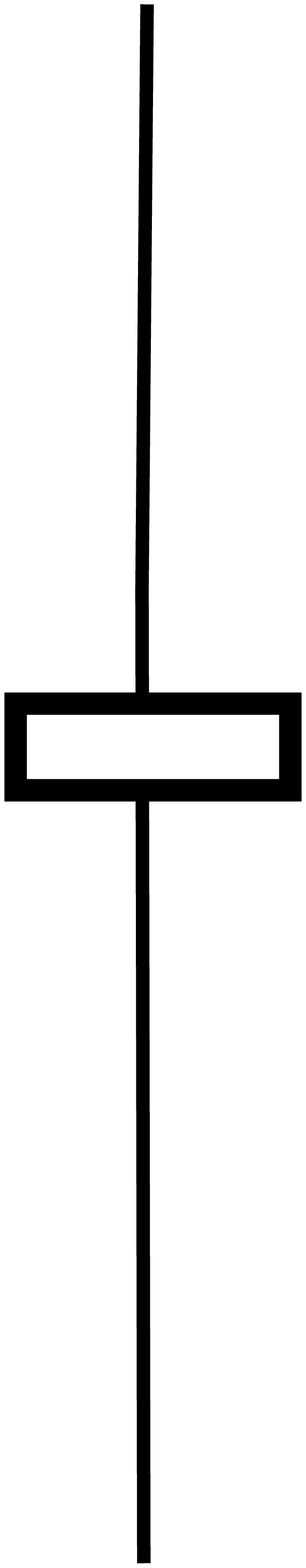}}
         \put(-20,+70){\footnotesize{$n$}}
   \end{minipage}
   =
  \begin{minipage}[h]{0.08\linewidth}
        \hspace{8pt}
        \scalebox{0.12}{\includegraphics{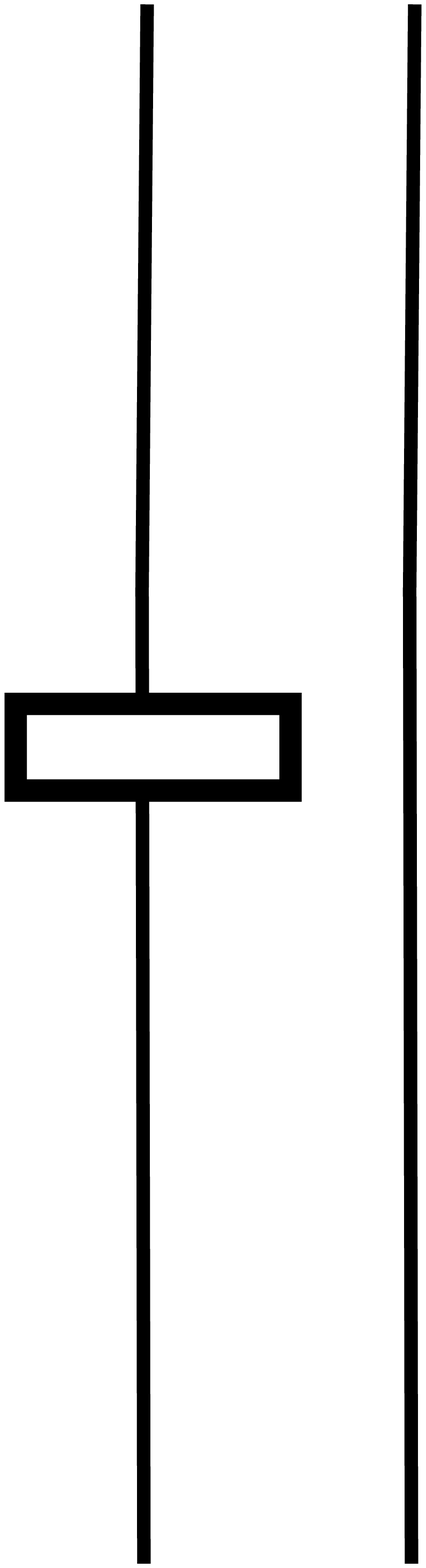}}
        \put(-42,+70){\footnotesize{$n-1$}}
        \put(-8,+70){\footnotesize{$1$}}
   \end{minipage}
   \hspace{9pt}
   -
 \Big( \frac{\Delta_{n-2}}{\Delta_{n-1}}\Big)
  \hspace{9pt}
  \begin{minipage}[h]{0.10\linewidth}
        \vspace{0pt}
        \scalebox{0.12}{\includegraphics{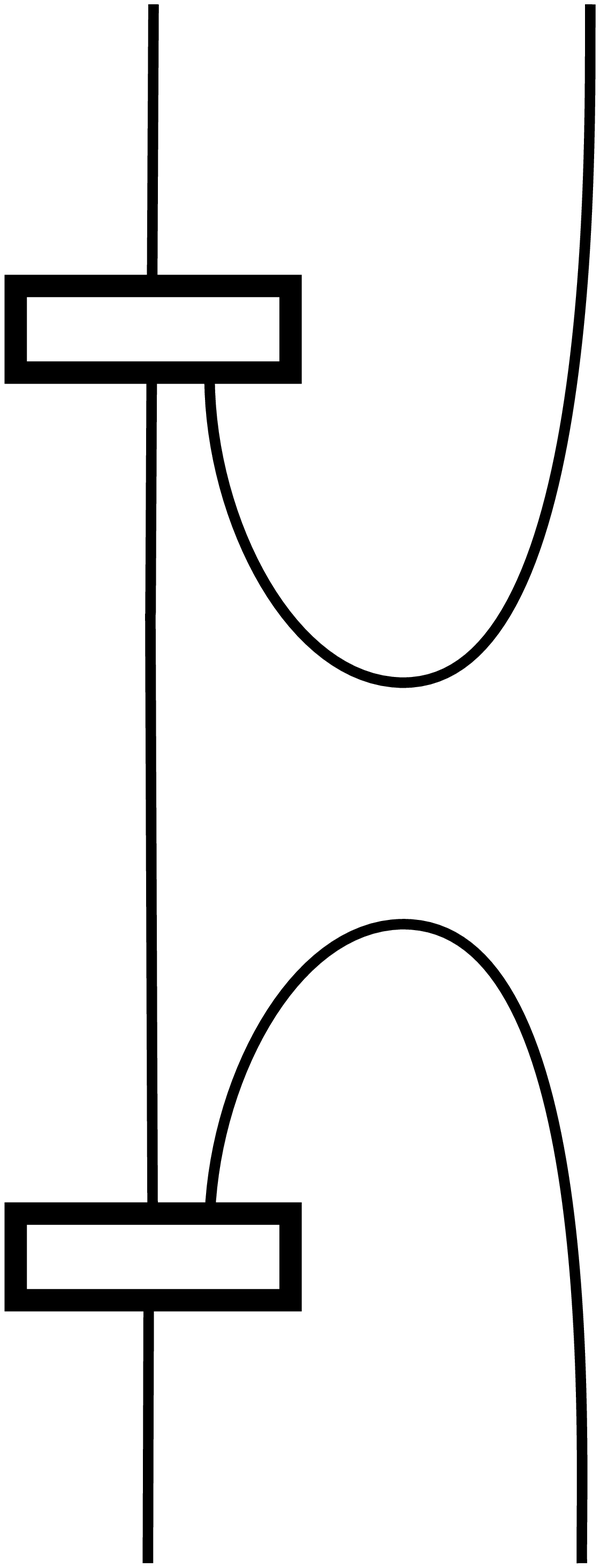}}
         \put(2,+85){\footnotesize{$1$}}
         \put(-52,+87){\footnotesize{$n-1$}}
         \put(-25,+47){\footnotesize{$n-2$}}
         \put(2,+10){\footnotesize{$1$}}
         \put(-52,+5){\footnotesize{$n-1$}}
   \end{minipage}
  , \hspace{20 mm}
    \begin{minipage}[h]{0.05\linewidth}
        \vspace{0pt}
        \scalebox{0.12}{\includegraphics{nth-jones-wenzl-projector}}
        \put(-20,+70){\footnotesize{$1$}}
   \end{minipage}
  =
  \begin{minipage}[h]{0.05\linewidth}
        \vspace{0pt}
        \scalebox{0.12}{\includegraphics{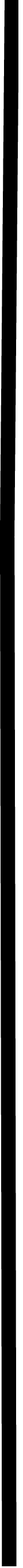}}
   \end{minipage}   
  \end{align}
  where 
\begin{equation*}
 \Delta_{n}=(-1)^{n}\frac{A^{2(n+1)}-A^{-2(n+1)}}{A^{2}-A^{-2}}.
\end{equation*} 

Furthermore, the idempotent $f^{(n)}$ has the following properties: 

\begin{eqnarray}
\label{properties}
\hspace{0 mm}
    \begin{minipage}[h]{0.21\linewidth}
        \vspace{0pt}
        \scalebox{0.115}{\includegraphics{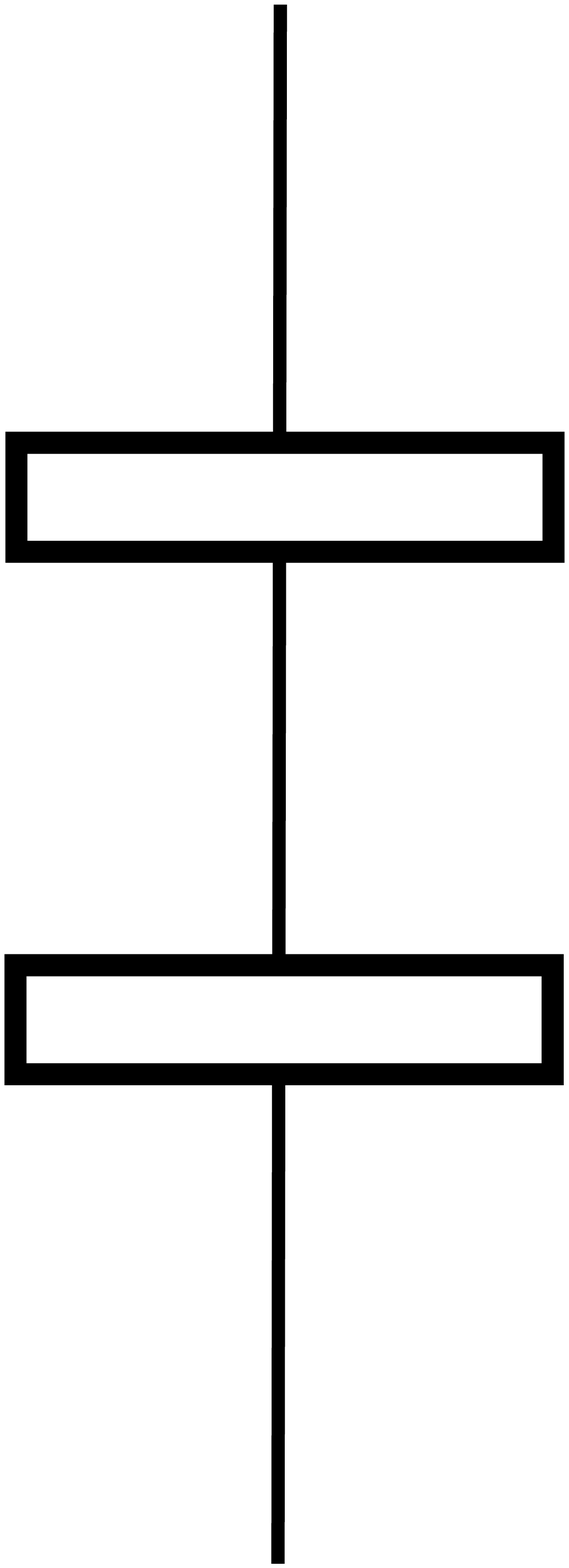}}
        \put(0,+80){\footnotesize{$n$}}
       
   \end{minipage}
  = \hspace{5pt}
     \begin{minipage}[h]{0.1\linewidth}
        \vspace{0pt}
         \hspace{50pt}
        \scalebox{0.115}{\includegraphics{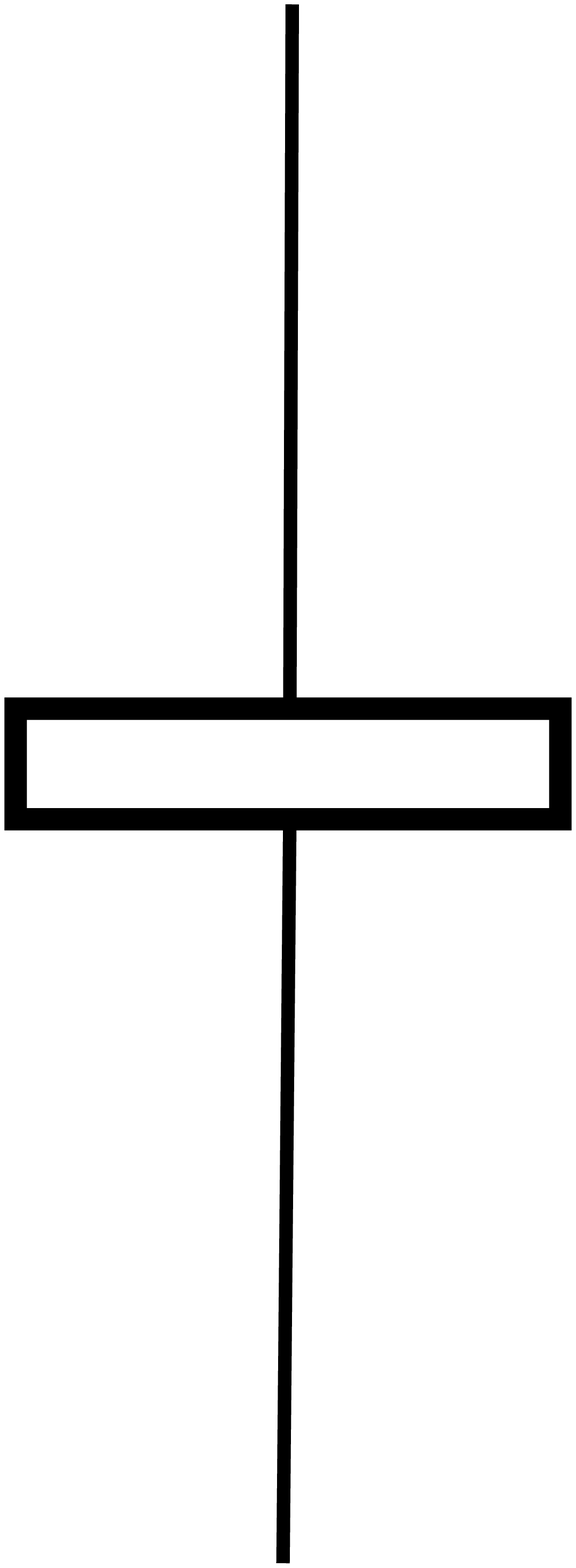}}
        \put(-60,80){\footnotesize{$n$}}
   \end{minipage}
    , \hspace{15 mm}
    \begin{minipage}[h]{0.09\linewidth}
        \vspace{0pt}
        \scalebox{0.115}{\includegraphics{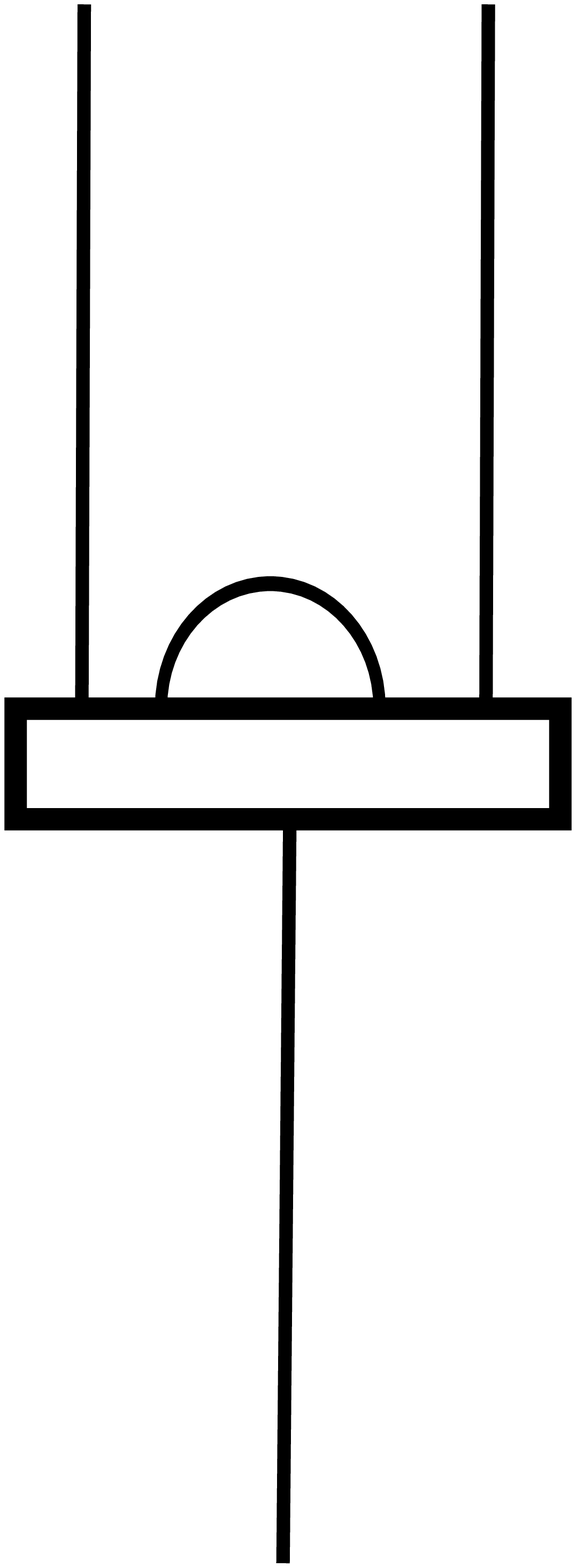}}
         \put(-70,+82){\footnotesize{$n-i-2$}}
         \put(-20,+64){\footnotesize{$1$}}
        \put(-2,+82){\footnotesize{$i$}}
        \put(-28,20){\footnotesize{$n$}}
   \end{minipage}
   =0,
   \label{AX}
  \end{eqnarray}
And:
\begin{eqnarray}
\label{properties}
\hspace{0 mm}
\Delta_{n}=
  \begin{minipage}[h]{0.1\linewidth}
        \vspace{0pt}
        \scalebox{0.12}{\includegraphics{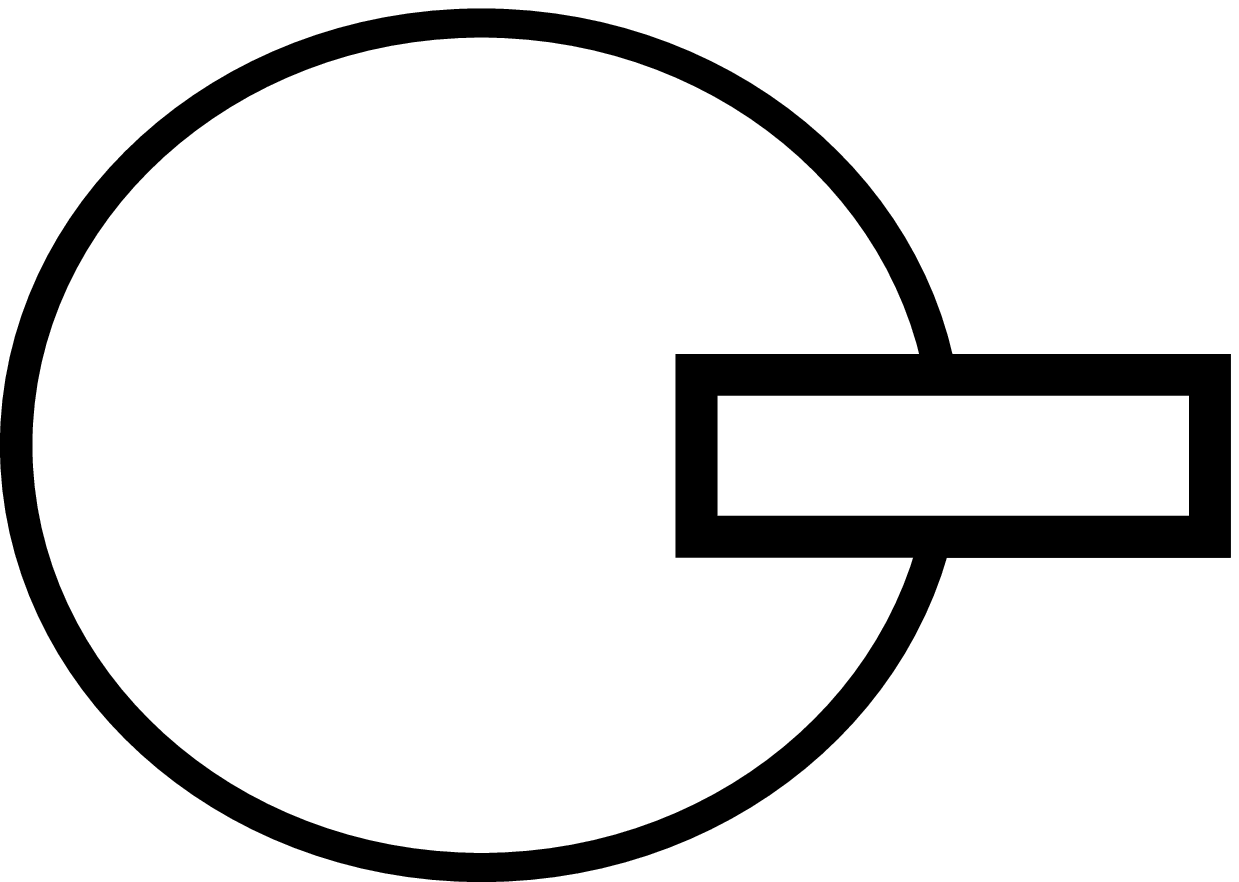}}
        \put(-29,+34){\footnotesize{$n$}}
   \end{minipage}
   , \hspace{14 mm}
     \begin{minipage}[h]{0.08\linewidth}
        \vspace{0pt}
        \scalebox{0.115}{\includegraphics{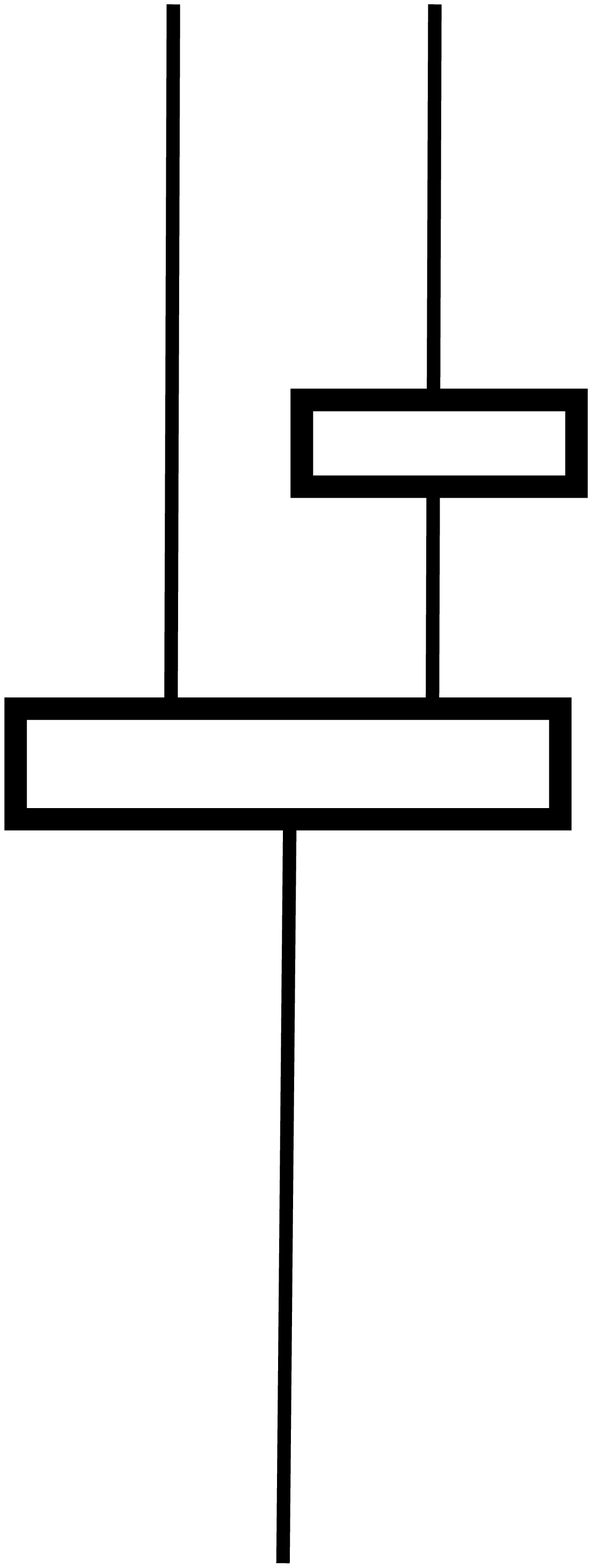}}
        \put(-34,+82){\footnotesize{$n$}}
        \put(-19,+82){\footnotesize{$m$}}
        \put(-46,20){\footnotesize{$m+n$}}
   \end{minipage}
  =
     \begin{minipage}[h]{0.09\linewidth}
        \vspace{0pt}
        \scalebox{0.115}{\includegraphics{idempotent2}}
        \put(-46,20){\footnotesize{$m+n$}}
   \end{minipage}
  \end{eqnarray}

If $A_n$ is a diagram in $TL_n$ and $B_n$ is a diagram in $TL_m$ then we define $A_n \otimes B_n $ to be the diagram in $TL_{(m+n)}$ obtained by joining the diagrams $A_n$ and $B_m$ as follows: 
\begin{eqnarray*}
   \begin{minipage}[h]{.5\linewidth}
         \vspace{0pt}
                  \scalebox{0.21}{\includegraphics{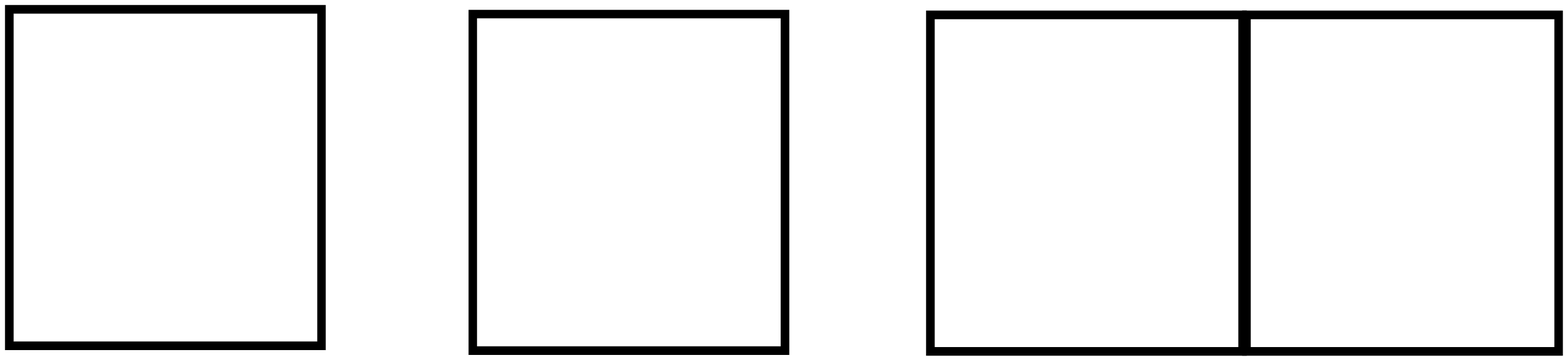}}         
    \put(-65,18){$A_n$}
         \put(-115,18){$B_m$}
          \put(-157,18){$A_n$}
          \put(-35,18){$B_m$}
          \put(-134,18){$\otimes$}
          \put(-88,18){$=$}
         \end{minipage}
  \end{eqnarray*}  
  
\noindent  
Before we introduce other skein modules we need the concept of \textit{wiring maps} to deal with linear maps between skein modules.
\subsubsection{Wiring Maps}  
We can relate various skein modules by linear maps induced from maps between surfaces. Let $F$ and $F^{\prime}$ be two oriented surfaces with marked points on their boundaries. A \textit{wiring} is an orientation preserving embedding of $F$ into $F^{\prime}$ along with a fixed \textit{wiring diagram} of arcs and curves in $F^{\prime}- F$ such that the boundary points of the arcs consists of all the marked points of $ F$ and $F^{\prime} $. Any diagram $D$ in $F$ induces a diagram $\mathcal{W}(D)$ in $F^{\prime}$ by extending $D$ by a wiring diagram. A wiring $W$ of $F$ into $F ^{\prime}$ induces a module homomorphism 
\begin{equation*}
\mathcal{S}(W):\mathcal{S}(F)\longleftrightarrow \mathcal{S}(F ^{\prime})
\end{equation*}
defined by $D \mapsto \mathcal 
{W}(D)$ for any $D$ diagram in $F$. More details about skein wiring can be found in the paper of Morton \cite{Morton}.  The following is an example which induces a map between the Temperley-Lieb algebra and the Kauffman skein module of the $2$-sphere.

\begin{example}
Consider the square $I \times I$ with $n$ marked points on the
top edge and $n$ marked points on the bottom edge. Embed $I \times I$ in $S^2$ and join the $n$ points on the top edge to the $n$ points on the bottom edge by parallel arcs as follows: 
\begin{eqnarray*}
   \begin{minipage}[h]{.5\linewidth}
         \vspace{0pt}
                  \scalebox{0.25}{\includegraphics{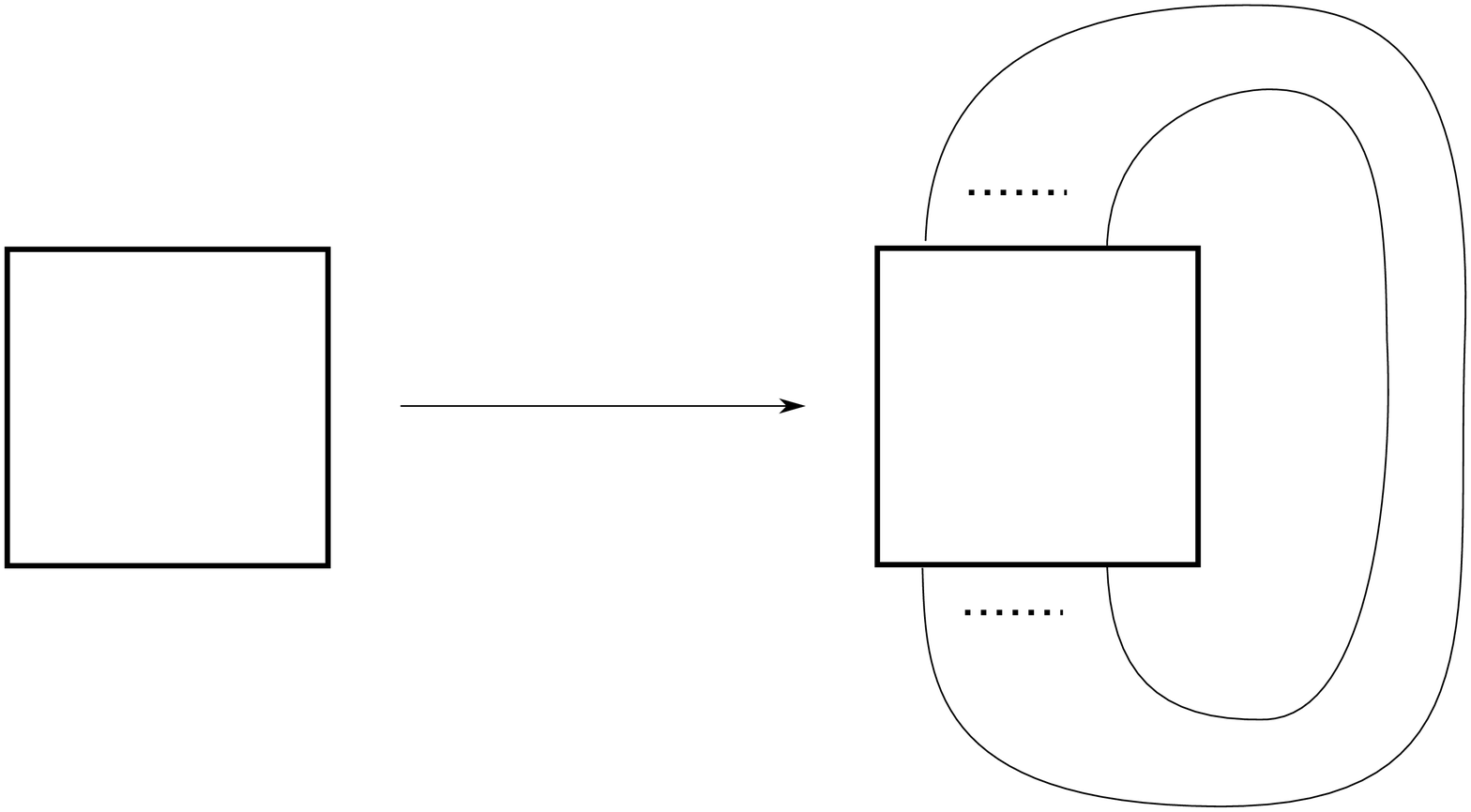}}
         \end{minipage}
  \end{eqnarray*}
 For each $n$, this wiring induces a module homomorphism:
  \begin{equation*}
	tr_n:TL_n \longrightarrow \mathcal{S}(S^2)  
  \end{equation*}
  This map is usually called the \textit{Markov trace} on $TL_n$.
\end{example}
\subsection{Colored Trivalent Graphs}
Now we will define certain submodules of the skein module of the disk $I \times I$ with marked points on the boundary. These modules will be useful in the computation of the tail of the colored Jones polynomial. Consider the skein module of $I \times I$ with $a+b+c$ specified points on the boundary. Partition the set
of the $a+b+c$ points on the boundary of the disk into $3$ sets of  $a$, $b$ and  $c$  
points respectively and at each cluster of points we place an appropriate idempotent, i.e. the one whose color matches the cardinality of this cluster. The skein module constructed in this method will be denoted by $T_{a,b,c}.$ The skein module $T_{a,b,c}$ is either zero dimensional or one dimensional. The skein module $T_{a,b,c}$ is one dimensional if and only if the element shown in Figure \ref{taw} exists. For this element to exist it is necessary to find
non-negative integers $x,y$ and $z$ such that $a=x+y$, $b=x+z$ and $c=y+z$. 
\begin{figure}[H]
  \centering
   {\includegraphics[scale=0.2]{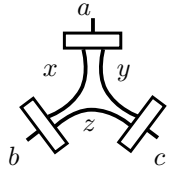}
   \small{
    \put(-60,-4){$b$}
         \put(-34,53){$a$}
          \put(-47,30){$x$}
          \put(-19,30){$y$}
          \put(-32,9){$z$}
         \put(-5,-4){$c$}
         }
     \caption{The skein element $\tau_{a,b,c}$ in the space $T_{a,b,c\text{ }}$ }
  \label{taw}}
\end{figure}
The following definition characterizes the existence of this skein element in terms of the integers $a$, $b$ and $c$. 
\begin{definition}
\label{admi}
A triple of non-negative integers $(a,b,c)$ is \textit{admissible} if $a+b+c$ is even and $a+b\geq
c\geq |a-b|.$
\end{definition}

When the triple $(a,b,c)$ is admissible, one can write  
$x=\frac{a+b-c}{2}$,
$y=\frac{a+c-b}{2}$, and 
$z=\frac{b+c-a}{2}$.  In this case we will denote the skein element that generates the space by $\tau_{a,b,c}$. Note that when the triple $(a,b,c)$ is not admissible then the space $T_{a,b,c}$ is zero dimensional.  The fact that the inside colors are determined by the outside colors allows us to replace $\tau _{a,b,c}$ by a trivalent graph as follows:
\begin{figure}[H]

  \centering
   {\includegraphics[scale=0.25]{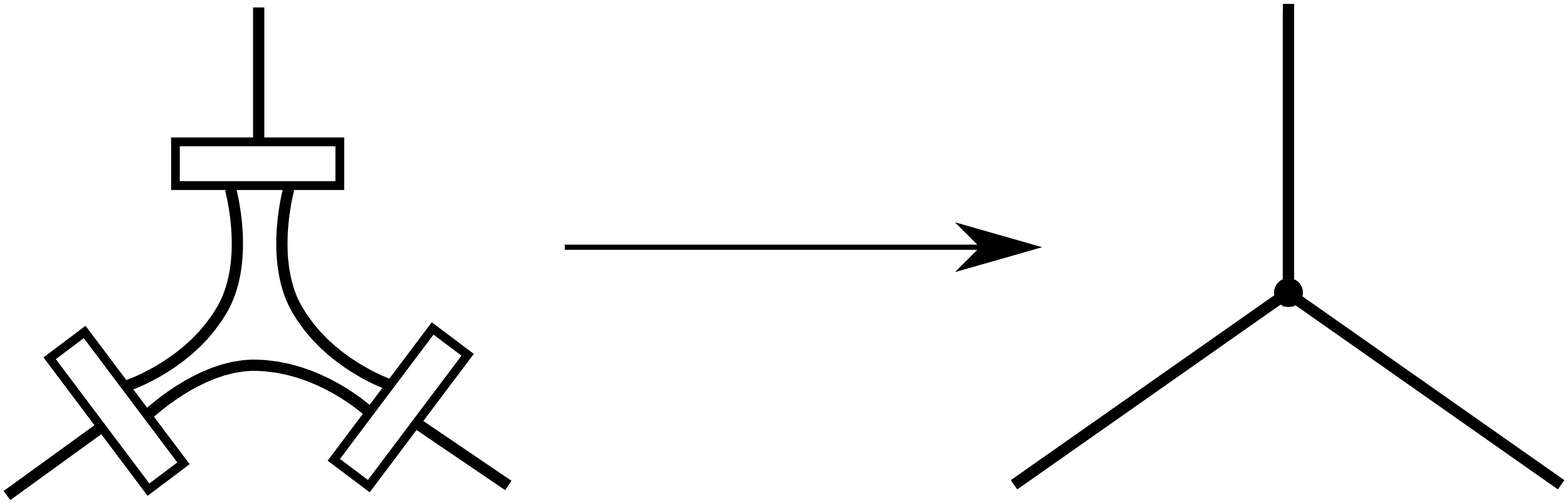}
    \put(-90,-8){$b$}
         \put(-38,68){$a$}
          \put(-227,30){$x$}
          \put(-199,30){$y$}
          \put(-212,10){$z$}
         \put(-5,-8){$c$}
            \put(-260,-8){$b$}
         \put(-205,68){$a$}
               \put(-169,-8){$c$}
     \caption{}
  \label{taw1}}
\end{figure}

Similarly, we define the module of the disk $\mathscr{D}^{a,b}_{c,d}$. Precisely the skein module $\mathscr{D}^{a,b}_{c,d}$ is a submodule of the skein module of the disk with $a+b+c+d$ marked points on the boundary and we place the idempotents $f^{(a)}$, $f^{(b)}$, $f^{(c)}$, and $f^{(d)}$ on the appropriate set of points as we did for $T_{a,b,c}$. See Figure \ref{disk}.
   \begin{figure}[H]
  \centering
   {\includegraphics[scale=0.13]{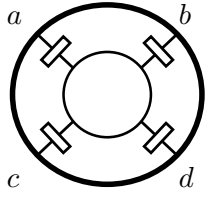}
    \put(-75,+62){$a$}
          \put(-10,+62){$b$}
          \put(-75,0){$c$}
          \put(-10,0){$d$}
            \caption{The relative skein module $\mathscr{D}^{a,b}_{c,d}$}
  \label{disk}}
\end{figure}

In order to perform our computation for the tail of the colored Jones polynomial it is important to understand the evaluation of certain skein elements in $\mathcal{S}(S^2)$. Th evaluation of these skein elements can be understood as the evaluation of certain colored trivalent graphs in $\mathcal{S}(S^2)$. A \textit{colored trivalent graph} is a planer trivalent graph with edges labeled by non-negative integers. One usually uses the word \textit{color} to refer to a label of the edge of a trivalent graph. A colored trivalent graph is called \textit{admissible} if the three edges meeting at a vertex satisfy the admissibility condition of the definition \ref{admi}. If $D$ is an admissible colored trivalent graph then the Kauffman bracket evaluation of $D$ is defined to be the evaluation of $D$ as an element in $\mathcal{S}(S^{2})$ after replacing each edge colored $n$ by the projector $f^{(n)}$ and each admissible vertex colored $(a,b,c)$ by the skein element $\tau_{a,b,c}$, as in Figure \ref{taw1}. If a colored trivalent graph has an inadmissible vertex then we will consider its evaluation in $\mathcal{S}(S^{2})$ to be zero.  We will need the evaluation of the following important colored trivalent graphs shown in Figure \ref{graphs}.

   \begin{figure}[H]
  \centering
   {\includegraphics[scale=0.13]{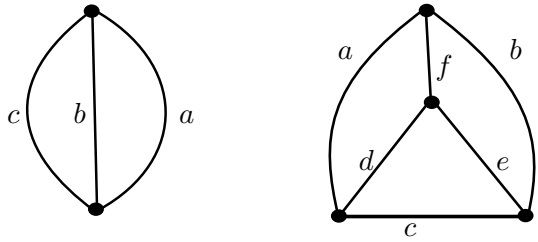}
    \put(-75,+62){$a$}
          \put(-10,+62){$b$}
          \put(-50,-5){$c$}
          \put(-67,20){$d$}
          \put(-15,20){$e$}
          \put(-38,56){$f$}
          \put(-135,38){$a$}
          \put(-175,38){$b$}
          \put(-200,38){$c$}
            \caption{The theta graph on the left and the tetrahedron graph on the right.}
  \label{graphs}}
\end{figure}
For an admissible triple $(a,b,c)$, an explicit formula for the \textit{theta coefficient}, denoted $\Theta(a,b,c)$, was computed in \cite{MV} and is given by:
\begin{equation}
  \begin{minipage}[h]{0.1\linewidth}
       \scalebox{0.10}{\includegraphics{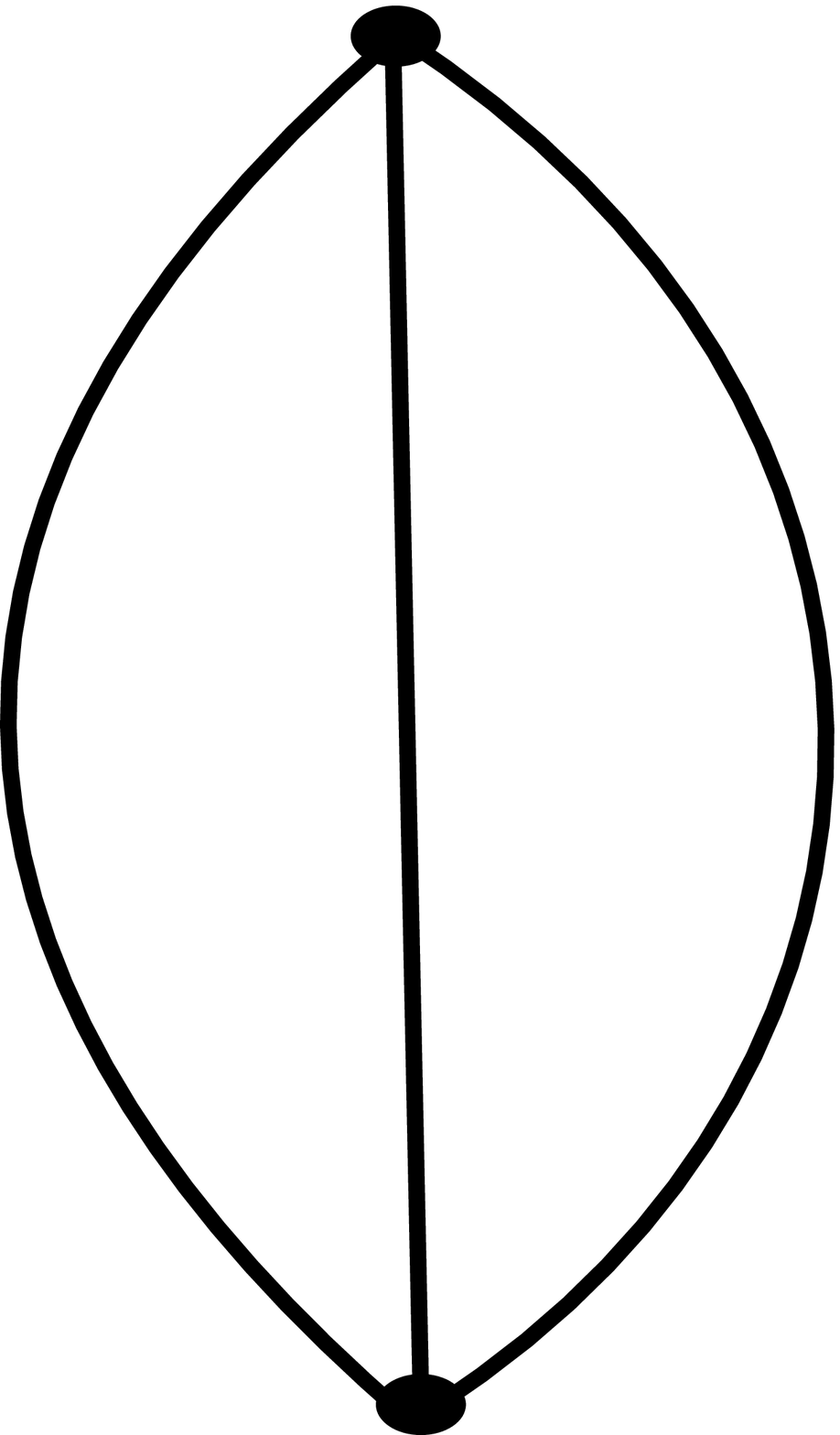}}
       \put(-42,25){$a$}
       \put(-24,25){$b$}
       \put(-10,25){$c$}
   \end{minipage}
   =(-1)^{i+j+k}\frac{[i+j+k+1]![i]![k]![j]!}{[i+j]![i+k]![j+k]!}
   \end{equation}
where $i,j$ and $k$ are the interior colors of the vertex $(a,b,c)$. The  \textit{tetrahedron coefficient} is defined to be the evaluation of the graph appearing on the right handside of Figure \ref{graphs} and a formula of it can be found in \cite{MV}. The tetrahedron graph in Figure \ref{graphs} is denoted by $Tet\left[ 
\begin{array}{ccc}
a & d & e \\ 
f & c & b%
\end{array}%
\right]$. The following identity holds in $T_{a,b,c}$:

\begin{eqnarray}
\label{firsty}
    \begin{minipage}[h]{0.21\linewidth}
         \vspace{-0pt}
         \scalebox{0.35}{\includegraphics{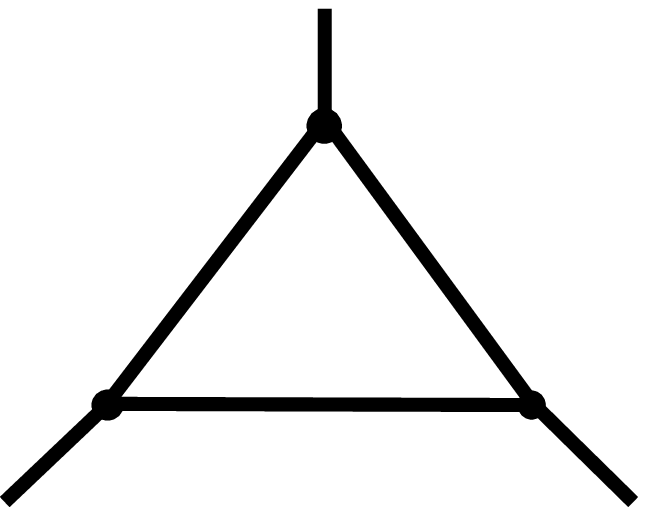}}
     \put(-69,-10){$b$}
          \put(-1,-10){$c$}
         \put(-26,50){$a$}
         \put(-59,25){$d$}
         \put(-16,25){$e$}
          \put(-36,0){$f$}
                    \end{minipage}&=&\frac{Tet\left[ 
\begin{array}{ccc}
a & d & e \\ 
f & c & b%
\end{array}%
\right]}{\Theta(a,b,c)}
   \begin{minipage}[h]{0.16\linewidth}
        \vspace{-0pt}
        \scalebox{0.25}{\includegraphics{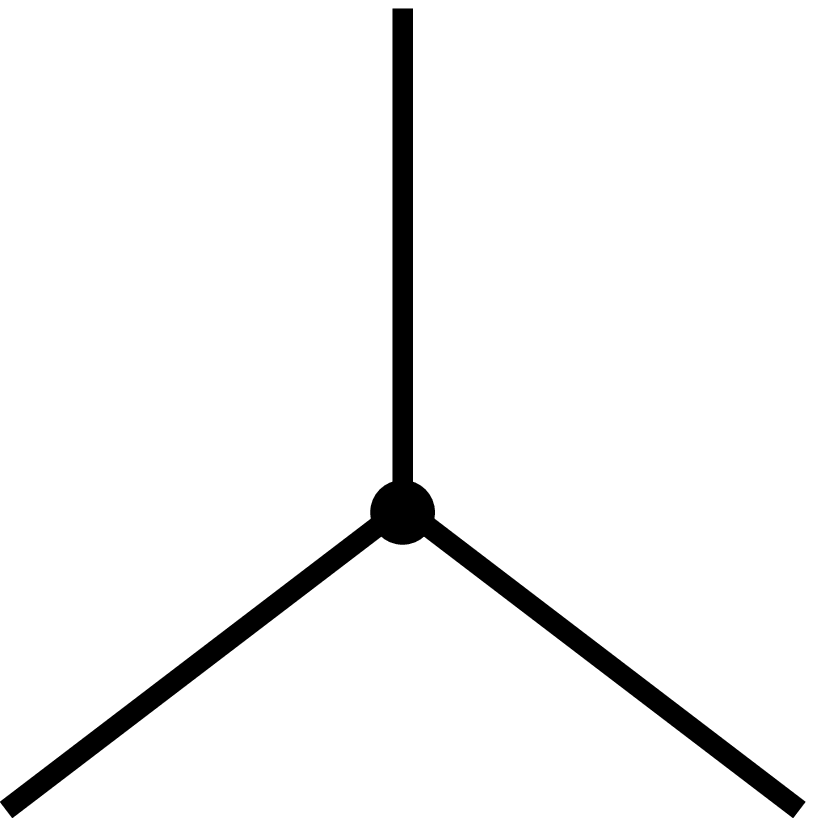}}
         \put(-60,-10){$b$}
          \put(-1,-10){$c$}
         \put(-26,50){$a$}
           \end{minipage}
  \end{eqnarray}

Recall that, for any integers $l,i$ such that $ 0\leq i \leq l$, the quantum binomial coefficients are defined by :
\begin{equation*}
{l \brack i}_{q}=\frac{(q;q)_l}{(q;q)_i(q;q)_{l-i}}.
\end{equation*}
where $(a;q)_n$ is $q$-Pochhammer symbol which is defined as 
\begin{equation*}
(a;q)_n=\prod\limits_{j=0}^{n-1}(1-aq^j).
\end{equation*}
 We will need the following identity \cite{Hajij1}.  
\begin{theorem}(The bubble expansion formula)
\label{main111}
Let $m,n,m^{\prime},n^{\prime} \geq 0$, and $k\geq l$;  $k,l \geq 1$. Then\\
{\small
\begin{eqnarray}
\label{bubble expansion formula121}
  \begin{minipage}[h]{0.17\linewidth}
        \vspace{0pt}
        \scalebox{0.11}{\includegraphics{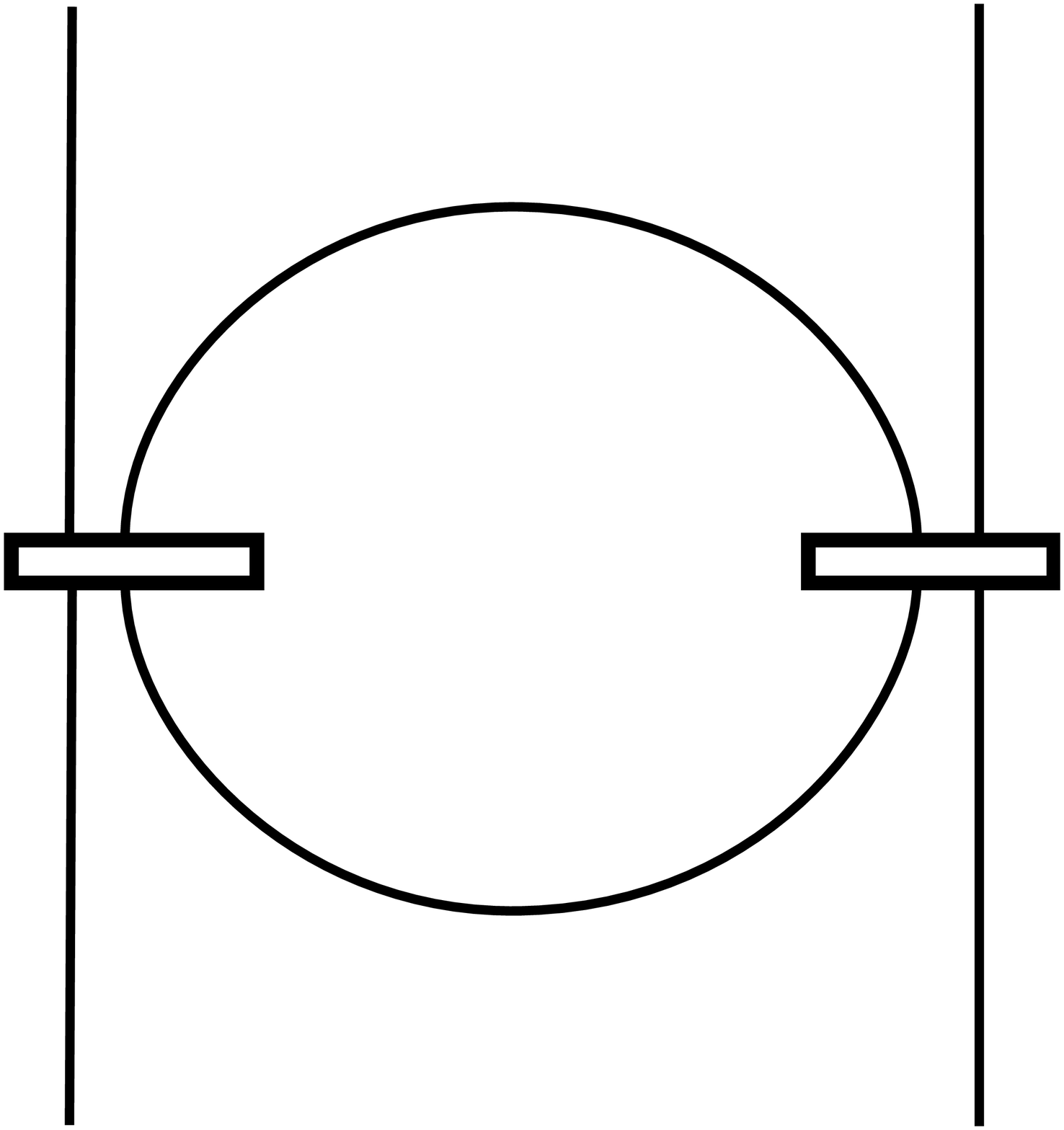}}
        \put(-68,+80){$m$}
          \put(-8,+80){$n$}
          \put(-68,-7){$m^{\prime}$}
          \put(-8,-7){$n^{\prime}$}
           \put(-37,+67){$k$}
         \put(-37,+3){$l$}
   \end{minipage}
   =\displaystyle\sum\limits_{i=0}^{\min(m,n,l)}
   \left\lceil 
\begin{array}{cc}
m & n \\ 
k & l%
\end{array}%
\right\rceil _{i}
    \begin{minipage}[h]{0.15\linewidth}
        \vspace{0pt}
        \scalebox{0.11}{\includegraphics{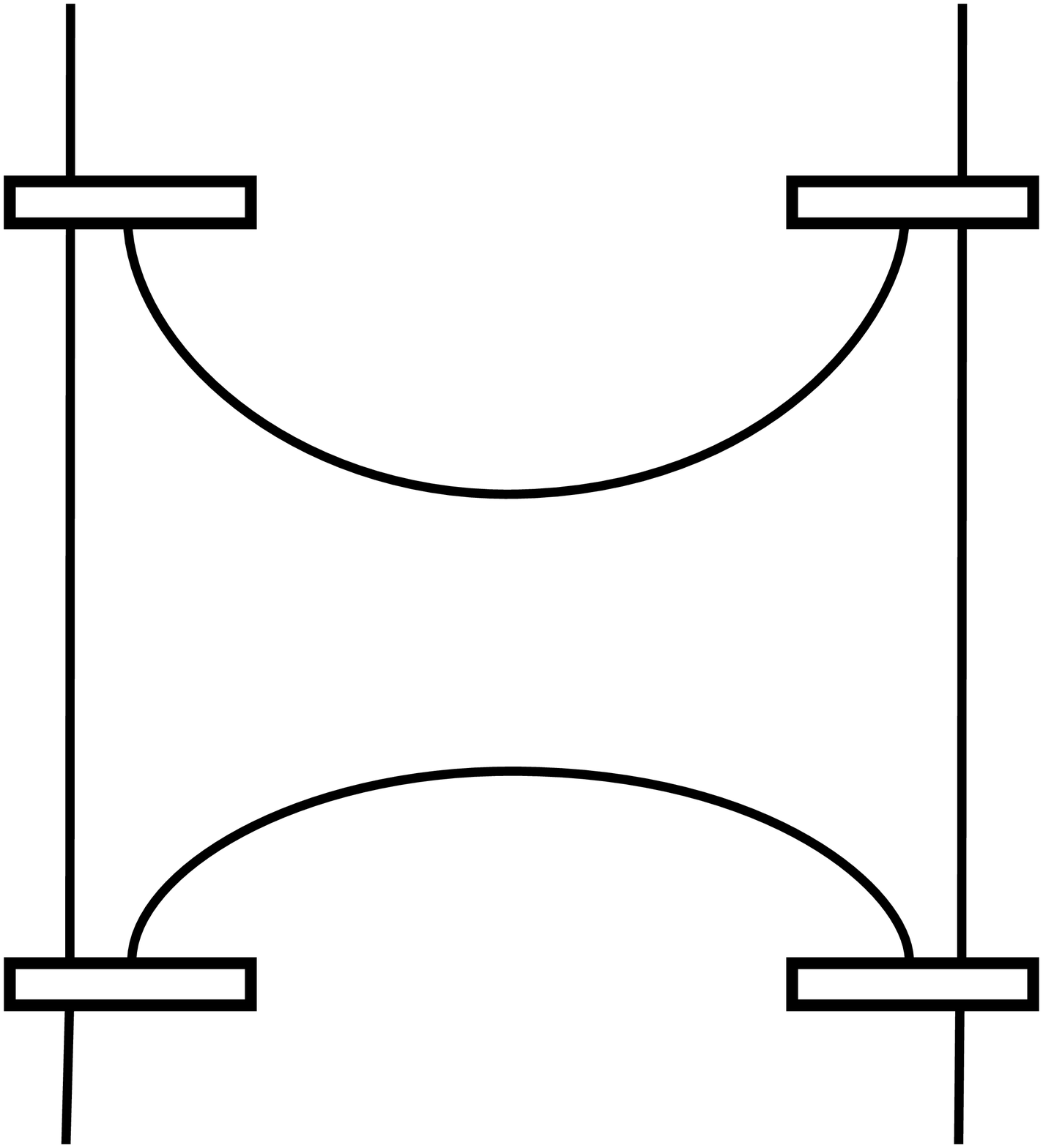}}
        \put(-68,+80){$m$}
          \put(-8,+80){$n$}
          \put(-68,-7){$m^{\prime}$}
          \put(-8,-7){$n^{\prime}$}
           \put(-37,+50){$i$}
         \put(-52,28){$k-l+i$}
   \end{minipage}.
  \end{eqnarray}
}
where 
\begin{equation*}
\left\lceil 
\begin{array}{cc}
m & n \\ 
k & l%
\end{array}%
\right\rceil_{i}:=(-A^2)^{i(i-l)}\frac{\displaystyle\prod_{j=0}^{l-i-1}\Delta
_{k-j-1}\prod_{s=0}^{i-1}\Delta _{n-s-1}\Delta _{m-s-1}}{%
\displaystyle\prod_{t=0}^{l-1}\Delta _{n+k-t-1}\Delta _{m+k-t-1}}{l \brack i}_{A^4}\prod_{j=0}^{l-i-1}\Delta_{m+n+k-i-j}.
\end{equation*}
\end{theorem}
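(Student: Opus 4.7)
The plan is to expand the left-hand side of (\ref{bubble expansion formula121}) in an orthogonal basis for the relative Kauffman bracket skein module in which it lives. Both sides represent skein elements with the same external boundary data of four clusters of strands colored $m$, $n$, $m'$, $n'$, so they lie in a common relative skein module extending $\mathscr{D}^{m,n}_{m',n'}$. The diagrams on the right, indexed by the internal color $i$, can be taken (via the tetrahedron identity (\ref{firsty}) applied at the two trivalent vertices) as scalar multiples of standard trivalent basis elements with a single middle edge of color $i$, and these form an orthogonal basis with respect to the bilinear pairing $\langle \cdot, \cdot \rangle$ obtained by closing two diagrams against each other in $\mathcal{S}(S^2)$.

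Writing the expansion as $\mathrm{LHS} = \sum_i c_i\, T_i$ where $T_i$ is the $i$-th diagram on the right, orthogonality gives $c_i = \langle T_i, \mathrm{LHS}\rangle / \langle T_i, T_i\rangle$, so the task reduces to evaluating two closed colored trivalent graphs in $\mathcal{S}(S^2)$. The denominator $\langle T_i, T_i\rangle$ closes into a graph built from two theta-pieces glued along the edges of color $i$ and $k-l+i$, and evaluates to a product of theta coefficients divided by the relevant $\Delta_j$'s. For the numerator, the closure produces a graph still containing the two horizontal bars of colors $k$ and $l$; applying (\ref{firsty}) successively at each of the four trivalent vertices adjacent to these bars replaces each of them by a tetrahedron coefficient over a theta coefficient, and the remaining diagram collapses to a theta graph that can be evaluated directly.

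Substituting the explicit Masbaum--Vogel formulas for the theta and tetrahedron coefficients into the ratio and simplifying should yield the stated form, with the quantum binomial ${l \brack i}_{A^4}$ and the product of $\Delta_j$'s emerging from cancellations of quantum factorials. The main obstacle is precisely this final combinatorial simplification: the tetrahedron coefficient is given by an alternating sum of $q$-factorials, and showing that the resulting ratio collapses into a clean product demands careful tracking of the interior colors at each vertex as functions of $m, n, m', n', k, l$ and the summation index $i$. In particular, extracting the exact sign $(-A^2)^{i(i-l)}$ and the precise index ranges of the $\Delta_{k-j-1}$, $\Delta_{n-s-1}$, $\Delta_{m-s-1}$, $\Delta_{n+k-t-1}$, $\Delta_{m+k-t-1}$, and $\Delta_{m+n+k-i-j}$ factors will require patient bookkeeping with $q$-Pochhammer identities, and this is the step where the greatest risk of sign or index errors lies.
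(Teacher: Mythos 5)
You should first be aware that this paper contains no proof of Theorem \ref{main111} at all: it is imported verbatim from \cite{Hajij1} (``We will need the following identity''), and in that source the formula is established by a recursive skein argument --- essentially applying Wenzl's recursion (\ref{recursive}) inside the bubble to reduce the parameters $(k,l)$ step by step and then verifying by induction that the closed-form coefficients satisfy the resulting recursion. That route is elementary in the sense that it never requires evaluating tetrahedron coefficients. Your proposal --- expand the bubble in the orthogonal trivalent basis of the relative skein module and extract each coefficient as a ratio $\langle T_i,\mathrm{LHS}\rangle/\langle T_i,T_i\rangle$ of closed evaluations in $\mathcal{S}(S^2)$ --- is a genuinely different and in principle legitimate strategy.

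However, as written it has a real gap: everything that makes the theorem nontrivial is deferred. Concretely, (i) the junctions of the bubble element (and of the diagrams on the right-hand side) are places where four bundles $m,m',k,l$ resp. $n,n',k,l$ meet, not trivalent vertices, so the closed diagram $\langle T_i,\mathrm{LHS}\rangle$ is not yet a colored trivalent graph to which (\ref{firsty}) applies; you must first fuse the parallel $k$- and $l$-bands (or the four-valent junctions) into trivalent channels, and this fusion introduces additional internal summations that your outline does not control or show to collapse. (ii) Even once the graph is trivalent, the Masbaum--Vogel tetrahedron coefficient is in general an alternating sum, not a product, so the assertion that the numerator ``collapses to a theta graph'' and that the ratio ``should yield'' the stated product --- with the precise sign $(-A^2)^{i(i-l)}$, the factor ${l \brack i}_{A^4}$, and the vanishing of all terms with $i>\min(m,n,l)$ --- is exactly the $q$-hypergeometric identity that needs to be proven, and nothing in the proposal establishes it. So what you have is a plausible plan of attack rather than a proof; to complete it you would need either the special product evaluations of the particular tetrahedra that occur, or you would do better to follow the inductive recursion of \cite{Hajij1}, which avoids these evaluations entirely.
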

We will denote the skein element on the right handside of (\ref{bubble expansion formula121}) by $\mathcal{B}^{m,n}_{m^{\prime},n^{\prime}}(k,l)$ and we will call it the \textit{bubble skein element}.
\subsection{The Tail of The Colored Jones Polynomial}
 We briefly review the basics of the head and the tail of the colored Jones polynomial. For more details see \cite{Hajij1, Hajij2}.
 
 Let $L$ be a framed link in $S^3$. Decorate every component of $L$, according to its framing, by the $n^{th}$ Jones-Wenzl idempotent and consider the evaluation of the decorated framed link as an element of $\mathcal{S}(S^3)$. Up to a power of $\pm A$, that depends on the framing of $L$, the value of this element is defined to be the $n^{th}$ (unreduced) colored Jones polynomial $\tilde{J}_{n,L}(A)$. Recovering the reduced Jones polynomial is a matter of changing a variable and dividing by $\Delta_n$. Namely,
 \begin{equation}
 \label{change of variable}
 J_{n+1,L}(q)=\frac{\tilde{J}_{n,L}(A)}{\Delta_n}\bigg|_{A=q^
 	{1/4}}
 \end{equation}
 
 \noindent
 If $P_1(q)$ and $P_2(q)$ are elements in $\mathbb{Z}[q^{-1}][[q]]$, we write $P_1(q)\doteq_n P_2(q)$ if their first $n$ coefficients agree up to a sign.  It was proven in \cite{CodyOliver} that the coefficients of the colored Jones polynomial of an alternating link $L$ stabilize in the following sense: For every $n\geq2$, we have $J_{n+1,L}(q)\doteq_n J_{n,L}(q)$. This motivated the authors of \cite{CodyOliver} to define the tail of the colored Jones polynomial of a link. More precisely, define the $q$-series series associated with the colored Jones polynomial of an alternating link $L$ whose $n^{th}$ coefficient is the $n^{th}$ coefficient of $J_{n,L}(q)$. Stated differently, the tail of the colored Jones polynomial of a link $L$ is defined to be a series
 $T_L(q)$, that satisfies  $T_L(q)\doteq_{n}J_{n,L}(q)$ for all $n \geq 1$. In the same way, the head of the colored Jones polynomial of a link $L$ is defined to be the tail of $J_{n,L}(q^{-1})$. 
 The head and the tail of the colored Jones polynomial of an alternating link $L$ can be recovered from a sequence of skein elements in $\mathcal{S}(S^2)$. The study of this sequence of skein elements is relatively easier than the study of the entire colored Jones polynomial. For more details see \cite{CodyOliver} and \cite{Hajij2}. We recall this fact here. Let $L$ be a link in $S^3$ and $D$ be an alternating knot diagram of $L$. Consider the all $B$-smoothings state of $D$, the state obtained by replacing each crossing by a $B$-smoothing. We record the places of this smoothing by a dashed line as can be seen in Figure \ref{allB} for an example. Write $S^{(n)}_B(D)$ for the all $B$-smoothing state and consider the skein element obtained from $S_B(D)$ by decorating each circle in $S_B(D)$ with the $n^{th}$ Jones-Wenzl idempotent and replacing each dashed line in $S_B(D)$ with the $(2n)^{th}$ Jones-Wenzl idempotent. See Figure \ref{allB}. 
 
 \begin{figure*}[htb]
 	\centering
 	{\includegraphics[scale=0.1]{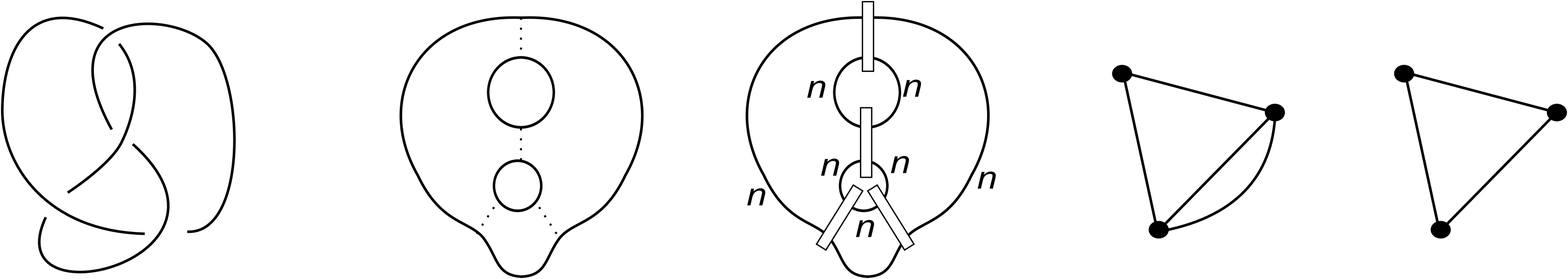}
 		\put(-380,-15){$D$ }
 		\put(-10,-15){$\mathbb{G}^{\prime}_B$  }
 		\put(-280,-15){$S_B(D)$ }
 		\put(-200,-15){$S^{(n)}_B(D)$ }
 		\put(-100,-15){$\mathbb{G}_B$ }
 		\caption{A link diagram $D$, its all-$B$ state $S_B(D)$, the skein element $S^{(n)}_B(D)$, the $B$-graph $\mathbb{G}_B(D)$, and the reduced all $B$-graph $\mathbb{G}^{\prime}_B(D)$.  }
 		\label{allB}}
 \end{figure*}
 The following theorem from \cite{CodyOliver} relates the tail of the colored Jones polynomial of an alternating link $D$ to the skein element $S_B^{(n)}(D)$.
 \begin{theorem}
 	\label{cody thm}
 	Let $L$ be an alternating link in $S^3$ and let $D$ be an alternating diagram of $L$. Then
 	\begin{equation*}
 	\tilde{J}_{n,L}(A)\doteq_{4(n+1)}S_B^{(n)}(D)
 	\end{equation*}  
 \end{theorem}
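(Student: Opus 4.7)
The plan is to compute $\tilde{J}_{n,L}(A)$ directly from its definition as the Kauffman bracket of the $n$-cabled, idempotent-decorated diagram $D^{(n)}$, and to isolate the contribution coming from the all-$B$ smoothing by expanding at each crossing of $D$. Using the fact that $f^{(n)}$ is idempotent, I would insert a copy of $f^{(n)}$ on every arc of $D^{(n)}$, so that locally at each crossing $c$ of $D$ we have two bundles of $n$ strands sandwiched between four copies of $f^{(n)}$. At each such crossing I would expand the local tangle as a sum
\[
X_{c}^{(n)} \;=\; c_{0}(A)\,T_{c,0} \;+\; \sum_{k=1}^{n} c_{k}(A)\,T_{c,k},
\]
where $T_{c,0}$ is the tangle obtained by $B$-smoothing all $n^{2}$ local crossings and inserting a $(2n)$-colored projector across the resulting dashed line, and $T_{c,k}$ with $k\ge 1$ are ``correction'' tangles in which $k$ strands are rerouted through the opposite smoothing. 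The key per-crossing estimate I would need is
\[
\min\deg_{A} c_{k}(A)\;-\;\min\deg_{A} c_{0}(A)\;\ge\;4k,
\]
which follows from the standard $A$-expansion of a single $n$-colored crossing together with the properties of the Jones--Wenzl idempotent recorded in \eqref{recursive} and \eqref{properties}.

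Next, substituting these local expansions into $\langle D^{(n)}\rangle$ and multiplying out yields a state sum indexed by functions $\vec{k}\colon\{\text{crossings of }D\}\to\{0,1,\dots,n\}$. The zero state produces precisely the skein element $S_{B}^{(n)}(D)$ of the theorem statement, since each component of $S_{B}(D)$ is decorated by $f^{(n)}$ and each original crossing site becomes a dashed line carrying $f^{(2n)}$. Hence
\[
\tilde{J}_{n,L}(A)\;=\;S_{B}^{(n)}(D)\;+\;\sum_{\vec{k}\ne 0}\Bigl(\prod_{c} c_{k_{c}}(A)\Bigr)\,\Sigma_{\vec k},
\]
with $\Sigma_{\vec k}\in\mathcal{S}(S^2)$ the evaluation of the corresponding state skein element.

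The third step is to bound from below the minimal $A$-degree of every non-zero state. Here the alternating hypothesis on $D$ enters essentially: an alternating diagram is $B$-adequate, meaning that no circle of $S_{B}(D)$ is glued to itself by a dashed arc. This adequacy guarantees that the leading $A$-monomial of each $\Sigma_{\vec k}$ can be computed without spurious cancellations arising from a projector closing on itself (which would insert a factor of $\Delta_{m}$ and lower the $A$-degree). Combining $B$-adequacy with the per-crossing estimate above, together with $A$-degree bounds for the Kauffman bracket evaluation of the resulting diagrams (obtained via the bubble expansion of Theorem~\ref{main111} and the explicit formulas for $\Delta_{n}$ and $\Theta(a,b,c)$), one obtains
\[
\min\deg_{A}\Bigl(\textstyle\prod_{c} c_{k_{c}}(A)\cdot\Sigma_{\vec k}\Bigr)\;-\;\min\deg_{A}\bigl(S_{B}^{(n)}(D)\bigr)\;\ge\;4(n+1)
\]
for every $\vec k\ne 0$, which immediately yields the claimed agreement $\tilde{J}_{n,L}(A)\doteq_{4(n+1)} S_{B}^{(n)}(D)$.

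The main obstacle is the last step: the per-crossing $A$-shift of $4k$ is easy to establish at a single isolated crossing, but when multiple corrections occur simultaneously the resulting skein diagrams must be simplified (closing bubbles, fusing projectors) before their $A$-degrees can be read off, and one must verify that none of these simplifications introduces a compensating negative shift in $A$-degree. The $B$-adequacy of alternating diagrams is exactly the geometric input that rules out such bad shifts: in a non-alternating diagram the all-$B$ state need not be adequate, a correction term could introduce a factor of the form $\Delta_{m}/\Delta_{m'}$ with $m<m'$, and the bound $4(n+1)$ would fail, explaining why the theorem is stated specifically for alternating $D$.
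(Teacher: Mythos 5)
First, a point of orientation: the paper does not prove Theorem \ref{cody thm} at all; it is imported verbatim from Armond--Dasbach \cite{CodyOliver} (see also Armond \cite{Armond}), so there is no internal proof to compare yours against. Judged on its own terms, your outline follows the same general strategy as the published proof: expand the idempotent-decorated cable crossing by crossing, identify the ``zero state'' with $S_B^{(n)}(D)$, and then argue that every other state is pushed up in minimal $A$-degree by at least $4(n+1)$, with $B$-adequacy of the alternating diagram as the geometric input. That skeleton is right.

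The genuine gap is the step you yourself flag as the main obstacle, and it is not a technicality one can wave at --- it is the entire content of the theorem. A per-crossing estimate of the form $\min\deg_A c_k - \min\deg_A c_0 \ge 4k$ does not by itself bound the states, because the quantity you must control is $\min\deg_A\bigl(\prod_c c_{k_c}\cdot \Sigma_{\vec k}\bigr)$, and the evaluations $\Sigma_{\vec k}$ in $\mathcal{S}(S^2)$ can have strictly smaller minimal degree than the all-$B$ evaluation: rerouting strands changes the circle structure of the state, and each change of a $\Delta_m$-factor (whose minimal degree is $-2m$) can eat into the $4k$ gained from the coefficients. Saying ``$B$-adequacy rules out such bad shifts'' names the hypothesis but does not supply the argument; the published proofs do real work here, analyzing the lowest-degree part of each state (via through-strands/crossingless-matching expansions of the idempotents, or an induction on the number of ``wrong'' smoothings) to show that merging or splitting circles costs at most what the coefficient shift provides, with adequacy guaranteeing no projector closes on itself at the critical stage. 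Your appeal to the bubble expansion of Theorem \ref{main111} and to the formulas for $\Delta_n$ and $\Theta(a,b,c)$ does not substitute for this, since those tools compute specific local configurations rather than giving a uniform degree bound over all $\vec k\neq 0$. In addition, your expansion at a single crossing is not well defined as stated: the ``correction tangles $T_{c,k}$ with $k$ strands rerouted'' must be organized through the fusion/recoupling basis (channels labelled by $f^{(2n-2j)}$, with coefficients coming from the twist eigenvalues), otherwise the coefficients $c_k(A)$ and hence the claimed degree estimate are not pinned down. As written, the proposal reproduces the setup of \cite{CodyOliver} but leaves its decisive estimate unproved.
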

 This theorem states basically that the study of the tail of the colored Jones polynomial of the alternating knot $D$ can be reduced to the study of the tail of the  sequence of skein elements $S_B^{(n)}(D)$. This theorem also implies that the tail of the colored Jones polynomial depends on the so called the \textit{reduced B-graph} of the diagram $D$. The $B$-graph of the diagram $D$, denoted $\mathbb{G}_{B}(D)$ is the graph whose vertices are the circles of $S_B(D)$ and whose edges are the dashed lines. The reduced $B$-graph of $D$, denoted by $\mathbb{G}^{\prime}_{B}(D)$, is obtained from $\mathbb{G}_{B}(D)$ by replacing parallel edges by a single
 edge. See the most right two drawings in Figure \ref{allB}.
 \begin{remark}
 	Since the colored Jones polynomial of a diagram $D$ depends only on its reduced $B$-graph, we will sometimes use the term \textit{the tail a graph $G$} to refer to the tail of colored Jones polynomial of an alternating knot diagram $D$ such that $\mathbb{G}^{\prime}_B(D)=G$. Conversely, Given a planar graph $G$, we can obtain an alternating knot diagram $D$ such that $\mathbb{G}^{\prime}_B(D)=G$ by replacing every edge in $G$ by a crossing as illustrated in Figure \ref{graph2knot}. For this reason, if $G$ is a planar graph then the tail of $G$ will be denoted by $T_G$. Furthermore, the notation $S_B^{(n)}(G)$ will refer to the skein element obtained from the reduced graph $G$ by replacing each vertex with a circle and each edge with the $2n^{th}$ Jones-Wenzl projector. 
 \end{remark}

 \begin{figure}[H]
 	\centering
 	{\includegraphics[scale=0.25]{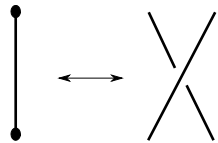}
 		\caption{Obtaining an alternating knot from a graph.}
 		\label{graph2knot}}
 \end{figure}
 \begin{remark}
 	\label{change remark}
 	In general the computations of the tail of the colored Jones polynomial is done for the reduced case. In order to use Theorem \ref{cody thm} one needs to do a change of variable and normalize by $\Delta$ as can be seen from the relation (\ref{change of variable}).
 \end{remark}
 The tail of the colored Jones polynomial has been computed for all knots in the knot table up to the knot $8_4$ by Armond and Dasbach in \cite{CodyOliver}. In \cite{Hajij2}, the second author gave a formula for $8_5$.
 
\section{Main Results}
\label{sec2.5}

In this section we list the main results of the paper. Let $a_1,\ldots,a_n$ be positive integers. Denote by $P(a_1,\ldots,a_n)$ the pretzel knot with $n$ crossing regions given in Figure \ref{pretzel family}.

\begin{figure}[H]
  \centering
   {\includegraphics[scale=0.25]{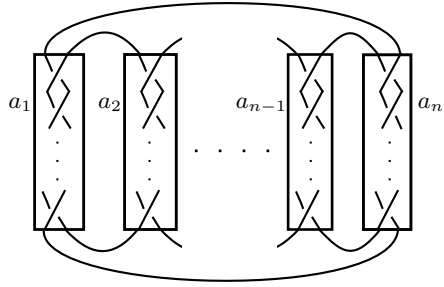}}
   \put(-153,+66){\footnotesize{$a_1$}}
   \put(-119,+66){\footnotesize{$a_2$}}
    \put(-67,+66){\footnotesize{$a_{n-1}$}}
    \put(2,+66){\footnotesize{$a_{n}$}}
   \caption{Pretzel knot $P(a_1,\ldots,a_n)$}
   \label{pretzel family}
\end{figure}

In the following theorem, we give a formula for the tail of the colored Jones polynomial of the pretzel knot $P(2k+1,2,2u+1)$ for $u,k \geq 1$.

\begin{theorem}
The tail of the pretzel knot $P(2k+1,2,2u+1)$ is given by
\begin{equation*}
T_{P(2k+1,2,2u+1)}(q)=(q;q)^2_\infty\sum\limits_{l_1=0}^{\infty}...\sum\limits_{l_k=0}^{\infty}\sum\limits_{p_1=0}^{\infty}...\sum\limits_{p_u=0}^{\infty}g(q;l_1,...,l_k)g(q;p_1,...,p_u)(q;q)_{i+j}
\end{equation*}
where 
\begin{equation*}
g(q;l_1,...,l_k)=\frac{q^{\sum\limits_{j=1}^{k}(i_j(i_j+1))}}{(q;q)^2_{l_{k}}\prod\limits_{j=1}^{k-1}(q;q)_{l_j}}
\end{equation*}
with $i_j=\sum\limits_{s=j}^{k}l_s$.
\end{theorem}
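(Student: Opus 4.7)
The plan is to follow the strategy used in \cite{Hajij1, Hajij2} for computing tails via bubble expansions of reduced $B$-graphs. First, I would identify the all-$B$ smoothing $S_B(D)$ of the standard alternating diagram $D$ of $P(2k+1,2,2u+1)$. Each of the three twist regions, with $2k+1$, $2$, and $2u+1$ crossings, $B$-smooths locally in a predictable way, and after collapsing parallel dashed lines, the reduced $B$-graph $\mathbb{G}'_B(D)$ becomes a small planar graph whose structure reflects the three twist regions: two ``chain'' subgraphs of length $k$ and $u$ respectively, joined through a common pair of vertices that comes from the middle $2$-twist region. By Theorem \ref{cody thm}, the tail is determined by the stable behavior of the skein element $S_B^{(n)}(D)$, which carries $2n$-colored Jones--Wenzl idempotents on the dashed edges and $n$-colored idempotents on the circles corresponding to vertices.

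Next, I would apply the bubble expansion formula (Theorem \ref{main111}) repeatedly to each bigon along the two odd chains. Each application introduces a new internal color $l_j$ (resp.\ $p_j$) that ranges over $\{0,\ldots,\min(m,n,l)\}$; stability of the tail lets us extend this range to $\infty$ in the limit. Performing the $k$ expansions on one side and the $u$ expansions on the other produces the iterated sums $\sum_{l_1,\ldots,l_k}\sum_{p_1,\ldots,p_u}$. The coefficients $\left\lceil\begin{smallmatrix}m&n\\k&l\end{smallmatrix}\right\rceil_i$ produced at each step involve ratios of $\Delta$'s and the $q$-binomial ${l\brack i}_{A^4}$; after the substitution $A=q^{1/4}$ and the normalization (\ref{change of variable}), and reindexing via $i_j=\sum_{s=j}^k l_s$, the product of these coefficients telescopes into exactly the local factor $g(q;l_1,\ldots,l_k)$, with the $(-A^2)^{i(i-l)}$ prefactors contributing the quadratic exponent $\sum_j i_j(i_j+1)$. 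The analogous telescoping on the other chain produces $g(q;p_1,\ldots,p_u)$.

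After the two chains have been fully expanded, the remaining diagram is a simple trivalent graph whose only nontrivial piece is the middle $2$-twist region, now carrying the accumulated colors $i=i_1$ and $j=j_1=\sum_{s=1}^{u}p_s$. A single further application of the bubble expansion together with identity (\ref{firsty}) and the explicit formula for $\Theta(a,b,c)$ evaluates this central piece. In the $n\to\infty$ limit, the quotient of theta and tetrahedron coefficients simplifies, under $A=q^{1/4}$, to a finite $q$-Pochhammer factor $(q;q)_{i+j}$. The overall prefactor $(q;q)_\infty^2$ comes from the two outer Jones--Wenzl circles that survive the reduction: each contributes a stable factor $(q;q)_\infty$ under the normalization prescribed by (\ref{change of variable}).

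The main obstacle is the combinatorial bookkeeping of the nested bubble expansions. Specifically, one must verify that the product of the $\Delta$-ratios appearing in Theorem \ref{main111} telescopes, after the change of variable, into precisely $(q;q)^2_{l_k}\prod_{j=1}^{k-1}(q;q)_{l_j}$, with no leftover stable factors; this is where the nonuniform denominator (an extra $(q;q)_{l_k}$ at the end of the chain) emerges. A secondary difficulty is ensuring that subleading powers of $q$ produced at intermediate steps, which are not visible in the final tail, do cancel against the framing corrections from (\ref{change of variable}), and that only the terms surviving in the stable limit contribute $(q;q)_{i+j}$ from the middle region.
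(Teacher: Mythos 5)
Your proposal follows essentially the same route as the paper's proof: reduce to the skein element $S_B^{(n)}$ via Theorem \ref{cody thm}, iterate the bubble expansion of Theorem \ref{main111} along the two chains coming from the odd twist regions (the paper's Lemma \ref{mainlemma}), evaluate the surviving central element through theta/tetrahedron data to produce the $(q;q)_{i+j}$ factor, and pass to the stable limit with the reindexing $i_j=\sum_{s=j}^{k}l_s$ (the paper's Lemma \ref{technical lemma} together with the Pochhammer-ratio estimates). The bookkeeping you defer is precisely what the paper carries out explicitly, so the plan is sound and matches the published argument.
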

This formula generalizes the one of the tail of colored Jones polynomial of the knot $8_5$ given in \cite{Hajij1}. Furthermore, we give a formula for the tail of the colored Jones polynomial of the pretzel knot $P(2,\ldots,2)$ with $k+1$ crossing regions. 

\begin{proposition}
\label{side1}
Let $k \geq 1 $ and let $P_k$ denotes $P(2,\ldots,2)$ with $k+1$ crossing regions. Then 
\begin{equation*}
T_{P_k}(q)=(q;q)_{\infty}^k \sum_{i=0}^{\infty}\frac{q^{i}}{(q;q)_{i}^k}.
\end{equation*}
\end{proposition}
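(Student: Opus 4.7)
The strategy is to apply Theorem~\ref{cody thm} together with the bubble expansion formula of Theorem~\ref{main111} to compute $T_{P_k}(q)$ directly from the all-$B$ skein element of a standard diagram of $P_k$.

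First, I identify the all-$B$ smoothing of a standard pretzel diagram of $P_k$: in each of the $k+1$ twist regions with two crossings, the $B$-smoothings produce a small inner bigon circle while the outer strands of the pretzel recombine into a common backbone. Reading off the dashed lines gives the reduced $B$-graph $\mathbb{G}'_B(P_k)$ and the associated skein element $S_B^{(n)}(P_k)\in\mathcal{S}(S^2)$. Structurally, this element is a union of bubble-type configurations, each involving the $n$-th and $2n$-th Jones-Wenzl idempotents, attached along a common outer projector.

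Next, at each inner bigon I apply the bubble expansion formula of Theorem~\ref{main111}. Each bubble is replaced by a sum over an auxiliary index $i_j$ with coefficients built from quantum binomial coefficients and $\Delta$-values, producing a priori a $(k+1)$-fold nested sum. Using the absorption and hook-annihilation properties of the Jones-Wenzl idempotent recorded in Equation~(\ref{recursive}) together with the admissibility constraints at the trivalent vertices, one verifies that every cross-term with distinct indices at different bubbles vanishes. This collapses the multi-sum into a single sum over one common index $i$.

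For fixed $i$, the surviving contribution from each bubble is a specific ratio built from the theta coefficient $\Theta(n,n,2i)$, tetrahedron coefficients, and $\Delta$'s. After the substitution $A=q^{1/4}$ and the normalization of Remark~\ref{change remark}, each bubble's contribution tends, as $n\to\infty$, to $(q;q)_\infty/(q;q)_i$. The product of $k$ such factors, one factor being absorbed into the Markov trace along the outer backbone, together with the outer $q$-Pochhammer symbols produced by the normalization, yields exactly $(q;q)_\infty^k\sum_{i\geq 0}q^i/(q;q)_i^k$.

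The principal technical difficulty is the combinatorial collapse of the $(k+1)$-fold bubble expansion into a single-index sum. This requires a careful propagation of the Jones-Wenzl hook-annihilation through the backbone to enforce equality of all $k+1$ bubble indices. Once this reduction is established, the remaining asymptotic step is a routine application of the standard $n\to\infty$ limits for $[n]$ and $\Delta_n$ together with the explicit formula for $\Theta(a,b,c)$.
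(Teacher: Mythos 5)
There is a genuine gap, and it is at the heart of your argument: the claimed collapse of the $(k+1)$-fold bubble-expansion sum to a single common index. If you expand each of the doubled-edge regions of $S_B^{(n)}(P_k)$ using Theorem \ref{main111}, the indices attached to the different twist regions do \emph{not} become equal, and no hook-annihilation or admissibility argument kills the cross-terms: after the first expansion the remaining regions are bubbles of the form $\mathcal{B}^{n-a,n-a}_{\ast,\ast}(n+a,n)$, whose expansions carry their own independent summation indices. Carrying this out to the end is exactly what the paper does in Proposition \ref{side2}, and the honest outcome is a genuinely $k$-fold nested sum, namely $\sum_{i_1,\ldots,i_k} q^{\sum_j i_j+i_j^2+\cdots}\big/\prod_j (q;q)_{i_j}(q;q)_{i_1+\cdots+i_j}$ (up to the $(q;q)_\infty$ normalization). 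Indeed, if the cross-terms vanished as you assert, the paper's Corollary --- the equality of this multi-sum with the single-sum $(q;q)_\infty\sum_i q^i/(q;q)_i^{k+1}$ --- would be a triviality visible at the skein level, whereas that equality is precisely the nontrivial $q$-series identity the paper is after.

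The single-index formula of the proposition is obtained in the paper by a different decomposition. Each bubble configuration is expanded not via Theorem \ref{main111} but in the one-dimensional pieces of the relative skein module $\mathscr{D}^{n,n}_{n,n}$ spanned by the horizontal elements with middle color $2i$ (the Masbaum--Vogel recoupling of Lemma \ref{main lemma 1}). In that basis each successive bubble acts \emph{diagonally}, multiplying the element of middle color $2i$ by the scalar $\theta(2n,2n,2i)/\theta(n,n,2i)$ (using that the relevant tetrahedral coefficients degenerate to $\theta(2n,2n,2i)$). A single common index therefore appears automatically, with no cancellation needed, and one gets $S_B^{(n)}(L_{k-1})=\sum_{i=0}^{n}\theta(2n,2n,2i)^{k}\theta(n,n,2i)^{-k}\Delta_{2i}$; the explicit formula for $\theta(n,n,2i)$, the normalization of Remark \ref{change remark}, and the $n\to\infty$ limits as in Theorem \ref{technical} then give $(q;q)_\infty^{k}\sum_i q^i/(q;q)_i^{k}$. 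So your asymptotic endgame is roughly right, but to repair the proof you must replace the bubble-expansion-plus-collapse step by the diagonal recoupling argument (or else prove the multi-sum identity independently, which is a nontrivial $q$-series statement rather than a skein-theoretic cancellation).
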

We use skein theoretic techniques  
to give another method to compute $T_{P_k}(q)$ and we obtain the following identity.
\begin{corollary}
For $k \geq 1 $ we have
\begin{equation*}
(q;q)_{\infty} \sum_{i=0}^{\infty}  \frac{q^{i}}{(q;q)_i^{k+1}} = \sum_{i_1=0}^{\infty}...\sum_{i_k=0}^{\infty}\frac{q^{\sum_{j=1}^k i_j+i_j^2+\sum_{s=2}^k \sum_{j=s}^k i_{s-1}i_j}}{\prod_{j=1}^k (q;q)_{i_j}  (q;q)_{\sum_{s=1}^j i_s} }
\end{equation*} 
\end{corollary}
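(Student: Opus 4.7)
The plan is to compute $T_{P_k}(q)$ a second time via an iterative skein-theoretic route that produces a nested sum, and then match this against Proposition~\ref{side1} to isolate the claimed identity. The pretzel knot $P_k = P(2,\ldots,2)$ with $k+1$ crossing regions is well adapted to the bubble expansion formula (Theorem~\ref{main111}): each crossing region has exactly two crossings, so after all-$B$ smoothing and decoration by $f^{(2n)}$ on the edges and $f^{(n)}$ on the circles, the skein element $S_B^{(n)}(P_k)$ takes the form of $k$ ``bubbles'' of type $\mathcal{B}^{m,n}_{m',n'}(k,l)$ stacked between two parallel long strands of color $n$.

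I would then apply Theorem~\ref{main111} to the bubbles one at a time, working from one end of the stack to the other. Each application introduces a new summation index $i_j$ together with the coefficient ${l \brack i_j}_{A^4}$ and the associated $\Delta$-ratios appearing in the formula. The idempotent property~(\ref{properties}) together with the fusion identity~(\ref{firsty}) allow the skein element to be restored, after each expansion, to a canonical shape on which the next bubble expansion can be applied. In the stable limit, after the change of variable $A = q^{1/4}$, the $(-A^2)^{i(i-l)}$ prefactor produces the diagonal quadratic exponent $q^{i_j^2}$, the quantum binomial together with the $\Delta$-ratios contribute the Pochhammer denominator $1/(q;q)_{i_j}$ and an overall power of $(q;q)_\infty$, while the way each newly expanded bubble ``sees'' the indices introduced in the earlier steps supplies both the linear terms $q^{i_j}$ and the cross terms $\sum_{s=2}^{k}\sum_{j=s}^{k}i_{s-1}i_j$. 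The closed colored trivalent graph that remains after all $k$ expansions collapses by standard theta-coefficient evaluations to exactly the denominators $(q;q)_{\sum_{s=1}^{j}i_s}$.

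Collecting everything, this route expresses $T_{P_k}(q)$ as an appropriate power of $(q;q)_\infty$ times the right-hand side of the corollary. Setting this equal to the closed form $T_{P_k}(q) = (q;q)_\infty^k \sum_{i}q^i/(q;q)_i^k$ from Proposition~\ref{side1} and dividing through by the common power of $(q;q)_\infty$ produces the stated identity.

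The principal obstacle is the careful bookkeeping: tracking the quadratic form through $k$ nested applications of Theorem~\ref{main111}, verifying that the mixed terms assemble into precisely the sum $\sum_{s=2}^{k}\sum_{j=s}^{k}i_{s-1}i_j$ rather than some other quadratic expression, and keeping an exact count of the powers of $(q;q)_\infty$ generated by the $\Delta$-ratios, theta coefficients, and the tail-taking limit, so that the cancellation against Proposition~\ref{side1} is clean. A secondary subtlety is making sure Theorem~\ref{main111} can be legitimately applied in the required order; this may need a few auxiliary skein moves based on~(\ref{properties}) to reshape the intermediate skein elements between successive expansions.
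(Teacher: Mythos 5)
Your proposal follows essentially the same route as the paper: the paper proves the identity by computing the tail of the graph $L_k$ twice, once via the Masbaum--Vogel spin-network evaluation (giving the single-sum side, Proposition~\ref{side1}) and once by iterating the bubble expansion formula of Theorem~\ref{main111} down the chain of bubbles (giving the nested-sum side, Proposition~\ref{side2}), then equating the two and cancelling the common power of $(q;q)_\infty$. Your identification of where the quadratic exponents, cross terms, and Pochhammer denominators arise matches the paper's bookkeeping in Proposition~\ref{side2}, so the approach is correct and essentially identical.
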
 
 This gives a natural generalization of the following well-known false theta function identity (The Lost Notebook and Other
Unpublished Papers; page 200 in \cite{Ramanujan}) : 
 
\begin{eqnarray*}
(q;q)^2_{\infty}
\sum_{i=0}^{\infty}\frac{q^{i}}{(q;q)_{i}^2}=(q;q)_{\infty}
\sum_{i=0}^{\infty}\frac{q^{i^2+i}}{(q;q)_{i}^2}.
\end{eqnarray*}

\section{Alternating Knots and Rogers-Ramanujan Type Identities }
\label{sec3}
In this section, we review the Rogers-Ramanujan type identities that were recovered in the literature using techniques related to the tail of the colored Jones polynomial of alternating links. Furthermore, we show the false theta function type identities that we recover in this paper from Pretzel knots.

The general two variable Ramanujan false theta function is given by (e.g. \cite{Andrews}):
\begin{equation}
\Psi(a,b)=\sum\limits_{i=0}^{\infty}a^{\frac{i(i+1)}{2}}b^{\frac{i(i-1)}{2}}-\sum\limits_{i=1}^{\infty}a^{\frac{i(i-1)}{2}}b^{\frac{i(i+1)}{2}}
\end{equation}

When $a=q^3$ and $b=q$, we obtain the following well-known identities:

\begin{eqnarray}
\Psi(q^3,q)=\sum_{k=0}^{\infty} (-1)^kq^{\frac{k^2+k}{2}}
\label{falseRam}
=(q;q)_{\infty}
\sum_{k=0}^{\infty}\frac{q^{k^2+k}}{(q;q)^2_{k}}
\label{falseRam2}=(q;q)^2_{\infty}
\sum_{k=0}^{\infty}\frac{q^{k}}{(q;q)^2_{k}}
\end{eqnarray}

In \cite{Hajij1}, the second author recovered the second identity in \ref{falseRam} using the tail of the $(2,4)$-torus link. Furthermore, the tail of the colored Jones polynomial of $(2,2k)$-torus links, where $k\geq 2$, to give a natural extension of the same identity \ref{falseRam}. For all $k\geq 2$, this identity is given by:

\begin{eqnarray}
\label{fock2}
\Psi(q^{2k-1},q)=(q;q)_{\infty}\sum\limits_{l_1=0}^{\infty}\sum\limits_{l_{2}=0}^{\infty}...\sum\limits_{l_{k-1}=0}^{\infty}\frac{q^{\sum\limits_{j=1}^{k-1}(i_j(i_j+1))}}{(q;q)^2_{l_{k-1}}\prod\limits_{j=1}^{k-2}(q;q)_{l_j}}
\end{eqnarray}
where $i_j=\sum\limits_{s=j}^{k-1}l_s$. On the other hand, a similar identity for the theta function, known as Roger-Ramanujan identity for the two-variable theta function, can be recovered from the tail of the colored Jones polynomial of $(2,2k+1)$-torus knots.  Recall that the general two variable Ramanujan theta function is defined by:
\begin{equation}
\label{oneone}
f(a,b)=\sum\limits_{i=0}^{\infty}a^{i(i+1)/2}b^{i(i-1)/2}+\sum\limits_{i=1}^{\infty}a^{i(i-1)/2}b^{i(i+1)/2}
\end{equation}

The function $f(a,b)$ specializes to: 
\begin{equation}
f(-q^{2k},-q)=\sum\limits_{i=0}^{\infty}(-1)^i q^{k(i^2+i)}q^{i(i-1)/2}+\sum\limits_{i=1}^{\infty}(-1)^i q^{k(i^2-i)}q^{i(i+1)/2}.
\end{equation}
  For $k \geq 1$ the Roger-Ramanujan identity for the theta function is given by:
\begin{eqnarray}
\label{and1}
f(-q^{2k},-q)=(q;q)_{\infty}\sum\limits_{l_1=0}^{\infty}\sum\limits_{l_{2}=0}^{\infty}...\sum\limits_{l_{k-1}=0}^{\infty}\frac{q^{\sum\limits_{j=1}^{k-1}(i_j(i_j+1))}}{\prod\limits_{j=1}^{k-1}(q;q)_{l_j}}
\end{eqnarray}
where $i_j=\sum\limits_{s=j}^{k-1}l_s$. The identities \ref{fock2} and \ref{and1} were recovered using a unified skein theoretic method in \cite{Hajij1}. Note that the identities (\ref{and1}) and (\ref{fock2}) are coming from cyclic graphs with odd and even number of vertices respectively. It is plausible to think that a natural family of knots, or graphs, gives rise to a natural family of $q$-series identities. In this paper we recover the third identity (\ref{falseRam2}) using the tail of the colored Jones polynomial. This $q$-series correspond to the graph given in the following Figure. 
\begin{figure}[H]
  \centering
 {\includegraphics[scale=0.15]{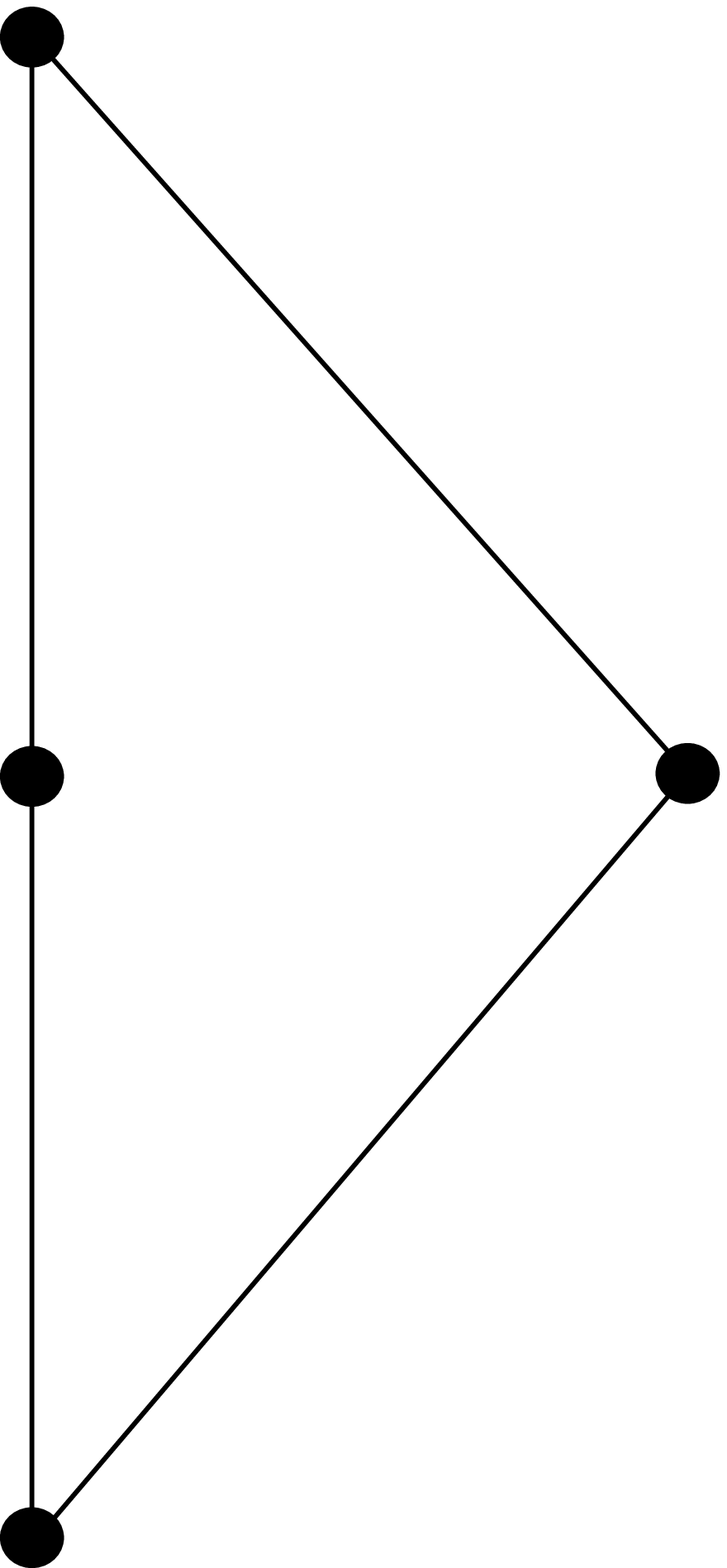}
  \label{pretzel_simple}}
\end{figure}

 Furthermore, we give a natural generalization of this identity using the tail of the graph $L_{k}$, where $k \geq 1$, given in Figure \ref{pretzel_2}. Note that the graph $L_{k-1}$ corresponds to the the pretzel knot $P_k$ in Proposition \ref{side1}.  

\begin{figure}[H]
  \centering
 {\includegraphics[scale=0.2]{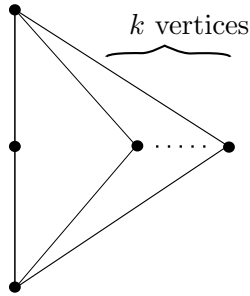}
 \put(-40,98){$k$ vertices}
  \caption{The graph $L_{k}$}
  \label{pretzel_2}}
\end{figure} 
We show that this generalization is given by: 
\begin{equation}
\label{new}
(q;q)_{\infty} \sum_{i=0}^{\infty}  \frac{q^{i}}{(q;q)_i^{k+1}} = \sum_{i_1=0}^{\infty}...\sum_{i_k=0}^{\infty}\frac{q^{\sum_{j=1}^k i_j+i_j^2+\sum_{s=2}^k \sum_{j=s}^{k} i_{s-1}i_j}}{\prod_{j=1}^k (q;q)_{i_j}  (q;q)_{\sum_{s=1}^j i_s} }
\end{equation}

\section{The Tail of the Colored Jones Polynomial of the Pretzel Knots $P(2k+1,2,2u+1)$}\label{sec4}
In \cite{Hajij1}, the second author computed the tail of the knot $8_5$. The tail of this knot is given by: 
\begin{equation}
T_{8_5}(q)=(q;q)^2_\infty\sum\limits_{i=0}^{\infty}\sum\limits_{j=0}^{\infty}\frac{ q^{(i +i^2+j+j^2)}(q;q)_{i+j}}{(q;q)^2_i(q;q)^2_j}.
\end{equation}
The series $T_{8_5}$ is similar to the following $q$-series:  
\begin{equation}
\label{Gamma}
T_{\Gamma}=(\Psi(q^3,q))^2=(q;q)^2_\infty\sum\limits_{i=0}^{\infty}\sum\limits_{j=0}^{\infty}\frac{ q^{(i +i^2+j+j^2)}}{(q;q)^2_i(q;q)^2_j}
\end{equation}
where $\Gamma$ is the graph shown on the right handside of Figure \ref{graph}. This similarity is not surprising since the graph associated to the knot $8_5$ is given in left handside of the Figure \ref{graph}. 
\begin{figure}[H]
  \centering
 {\includegraphics[scale=0.04]{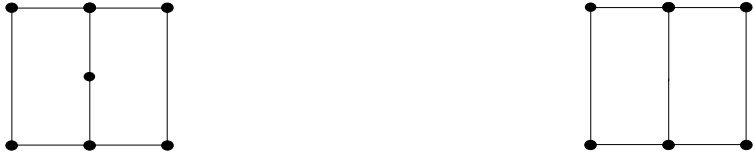}
  \caption{The reduced $B$-graph for $8_5$ on the left and the graph $\Gamma$ on the right. }
  \label{graph}}
\end{figure}
Motivated by this observation, in this section we will study the tail of the family of graphs given in Figure \ref{graph3} and show the relation between this $q$-series and the false theta function. Note that this graph corresponds to pretzel knots $(2k+1,2,2u+1)$ where $u,k \geq 1$.

\begin{figure}[H]
  \centering
 {\includegraphics[scale=0.04]{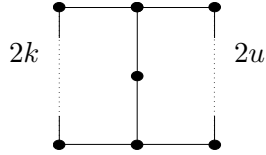}
  \put(-80,33){$2k$ }
  \put(5,33){$2u$ }
  \caption{The graph $\Phi_{k,u}$}
  \label{graph3}}
\end{figure}
For our tail computations, we will study the element  $\left(
   \begin{minipage}[h]{0.1\linewidth}
        \hspace{1pt}
        \scalebox{0.3}{\includegraphics{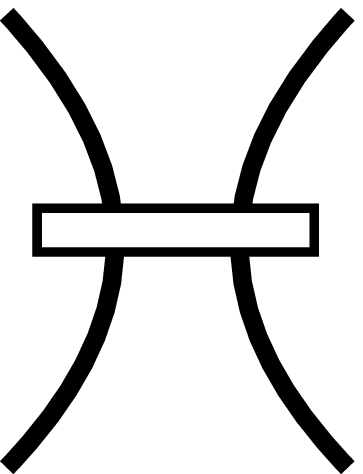}}
             \put(-35,+31){\footnotesize{$n$}}
             \put(-35,5){\footnotesize{$n$}}
             \put(-1,+31){\footnotesize{$n$}}
             \put(-1,5){\footnotesize{$n$}}
   \end{minipage} \hspace{-5pt} \right)^{\otimes t}$ for $t \geq 1$. Note that when $t=2$ we obtain the bubble element $\mathcal{B}^{n,n}_{n,n}(n,n)$.

\begin{lemma}
\label{mainlemma}
Let $n\geq 1$, then we have
\begin{enumerate}
\item For $k\geq 1$, we have
\small{
 \begin{eqnarray*}
 \left(
   \begin{minipage}[h]{0.1\linewidth}
        \hspace{1pt}
        \scalebox{0.3}{\includegraphics{halfbubble}}
             \put(-35,+28){\footnotesize{$n$}}
             \put(-1,+28){\footnotesize{$n$}}
   \end{minipage} \hspace{0pt} \right)^{\otimes (2k+1)}=\sum\limits_{i_1=0}^{n}\sum\limits_{i_2=0}^{i_1}...\sum\limits_{i_k=0}^{i_{k-1}}E_{n,i_1,...,i_k}  
   \begin{minipage}[h]{0.19\linewidth}
        \vspace{0pt}
        \scalebox{0.260}{\includegraphics{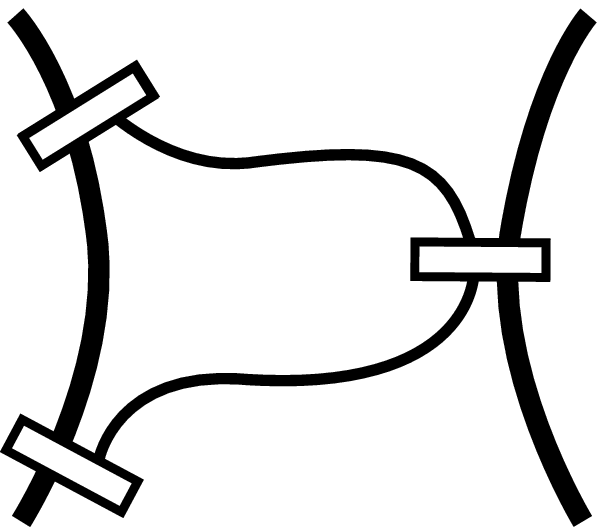}}
         \put(-50,35){$n$}
          \put(-30,35){$i_k$}
           \put(-30,3){$i_k$}
          \put(2,35){$n$}
           \end{minipage}
  \end{eqnarray*}
}
where 
\begin{equation}
E_{n,i_1,...,i_k}=
\left\lceil 
\begin{array}{cc}
n & n \\ 
n & n%
\end{array}%
\right\rceil _{i_1}
\frac{\Delta_{2n}}{\Delta_{n+i_1}}
\prod_{j=2}^k
\left\lceil 
\begin{array}{cc}
n & i_{j-1} \\ 
n & n%
\end{array}%
\right\rceil _{i_{j}}
\frac{\Delta_{2n}}{\Delta_{n+i_j}}
\end{equation}

\item For $k\geq 2$, we have  \small{
 \begin{eqnarray*}
  \left(
   \begin{minipage}[h]{0.1\linewidth}
        \hspace{1pt}
        \scalebox{0.3}{\includegraphics{halfbubble}}
             \put(-35,+28){\footnotesize{$n$}}
             \put(-1,+28){\footnotesize{$n$}}
   \end{minipage} \hspace{0pt} \right)^{\otimes (2k)}=\sum\limits_{i_1=0}^{n}\sum\limits_{i_2=0}^{i_1}...\sum\limits_{i_k=0}^{i_{k-1}}P_{n,i_1,...,i_k}\begin{minipage}[h]{0.19\linewidth}
        \vspace{0pt}
        \scalebox{0.260}{\includegraphics{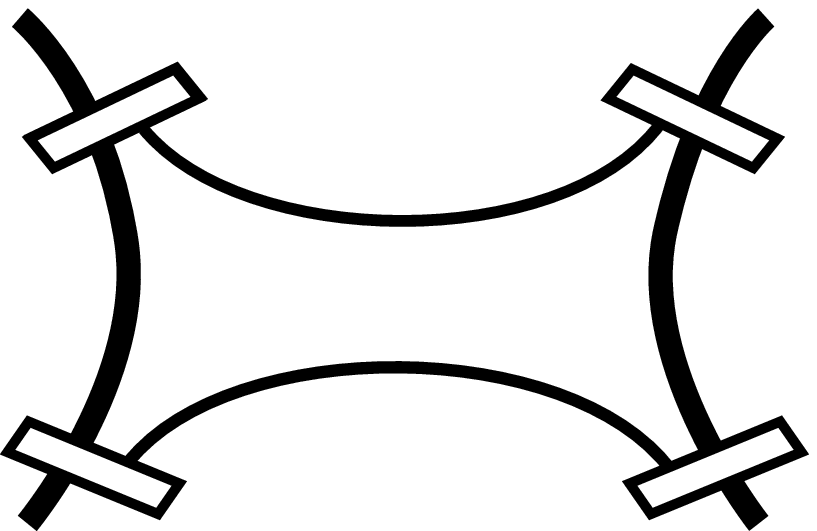}}
         \put(-57,38){$n$}
          \put(-37,33){$i_{k}$}
           \put(-37,3){$i_{k}$}
          \put(2,35){$n$}
           \end{minipage}.
  \end{eqnarray*}
} 

\end{enumerate}

where

\begin{equation}
P_{n,i_1,...,i_k}
=\left\lceil 
\begin{array}{cc}
n & n \\ 
n & n%
\end{array}%
\right\rceil _{i_1}
\frac{\Delta_{2n}}{\Delta_{n+i_1}}
\prod_{j=2}^{k-1}
\left\lceil 
\begin{array}{cc}
n & i_{j-1} \\ 
n & n%
\end{array}%
\right\rceil _{i_{j}}\
\frac{\Delta_{2n}}{\Delta_{n+i_j}}\left\lceil 
\begin{array}{cc}
n & i_{k-1} \\ 
n & n%
\end{array}%
\right\rceil _{i_{k}}  
\end{equation}

\end{lemma}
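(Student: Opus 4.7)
The plan is to establish both parts by a simultaneous induction on $k$, alternating between (1) and (2) as we attach one additional half-bubble at each step. The main tool is the bubble expansion formula (Theorem \ref{main111}) combined with the theta/tetrahedron identity (\ref{firsty}) for closing off a bubble.

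\textbf{Base case and alternation mechanism.} Two stacked half-bubbles form precisely the full bubble skein element $\mathcal{B}^{n,n}_{n,n}(n,n)$. One direct application of Theorem \ref{main111} to this configuration gives
\[
\left(\text{half-bubble}\right)^{\otimes 2} = \sum_{i_1=0}^{n} \left\lceil \begin{array}{cc} n & n \\ n & n \end{array} \right\rceil_{i_1} D_{i_1},
\]
where $D_{i_1}$ denotes the diagram on the right-hand side of the bubble expansion formula (an open bubble of color $i_1$ between two $n$-strands). Attaching one more half-bubble closes this open bubble into a theta configuration of colors $(n,n,2i_1)$, and identity (\ref{firsty}) contributes the scalar $\Delta_{2n}/\Delta_{n+i_1}$. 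This establishes (1) at $k=1$.

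\textbf{Inductive step.} Assume (1) holds at $k-1$. To obtain (2) at $k$, attach a single new half-bubble to the right-hand side of (1) at $k-1$. The existing final closed configuration offers a pair of $n$-strands together with an $i_{k-1}$-labelled internal color, and the new half-bubble combines these into a fresh bubble skein element $\mathcal{B}^{n,i_{k-1}}_{n,n}(n,n)$. Applying Theorem \ref{main111} once more introduces a new summation index $i_k$ whose upper bound is $\min(n, i_{k-1}, n) = i_{k-1}$, producing the nesting $i_k \leq i_{k-1}$ and the coefficient $\left\lceil \begin{array}{cc} n & i_{k-1} \\ n & n \end{array} \right\rceil_{i_k}$; this is the final factor appearing in $P_{n,i_1,\ldots,i_k}$. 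To pass from (2) at $k$ to (1) at $k$, attach one more half-bubble that closes the still-open bubble of color $i_k$, producing another theta configuration; by (\ref{firsty}) the extra factor is $\Delta_{2n}/\Delta_{n+i_k}$, transforming $P_{n,i_1,\ldots,i_k}$ into $E_{n,i_1,\ldots,i_k}$.

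\textbf{Main obstacle.} The chief difficulty is diagrammatic bookkeeping rather than any deep identity: I must verify that after each expansion the intermediate skein element really matches the diagrams drawn in the lemma (\texttt{gn\_bubbles\_4} in the odd case, \texttt{gn\_bubbles\_5} in the even case), that admissibility is preserved at every new trivalent vertex so no term silently vanishes, and that the closure step via (\ref{firsty}) simplifies cleanly to the scalar $\Delta_{2n}/\Delta_{n+i_j}$ at every stage rather than to a genuine tetrahedral evaluation. This clean simplification is what makes the nested-sum shape possible, and verifying it at each stage is where most of the routine calculation will concentrate.
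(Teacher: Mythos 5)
Your proposal is essentially the paper's own argument: the authors also write the tensor power of half-bubbles as a chain of bubbles and apply the bubble expansion formula (Theorem \ref{main111}) $k$ times (resp. $k-1$ times), which is exactly your alternating induction unrolled, with the nested ranges $i_j\le i_{j-1}$ coming from the upper limit $\min(n,i_{j-1},n)$ just as you say. The only imprecision is attributing the scalar $\Delta_{2n}/\Delta_{n+i_j}$ in the ``closing'' step to a theta configuration via (\ref{firsty}); in this setting it arises from the standard partial-trace/absorption property of the Jones--Wenzl projector (closing off $n-i_j$ strands of $f^{(2n)}$ gives $\frac{\Delta_{2n}}{\Delta_{n+i_j}}f^{(n+i_j)}$), although a degenerate tetrahedron-over-theta evaluation yields the same value, so your bookkeeping still closes.
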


\begin{proof}
(1) Note first that

\small{
 \begin{eqnarray*}
 \left(
   \begin{minipage}[h]{0.1\linewidth}
        \hspace{1pt}
        \scalebox{0.3}{\includegraphics{halfbubble}}
             \put(-35,+28){\footnotesize{$n$}}
             \put(-1,+28){\footnotesize{$n$}}
   \end{minipage} \hspace{0pt} \right)^{\otimes (2k+1)}=\hspace{15pt}
   \begin{minipage}[h]{0.28\linewidth}
         \vspace{0pt}
         \scalebox{0.265}{\includegraphics{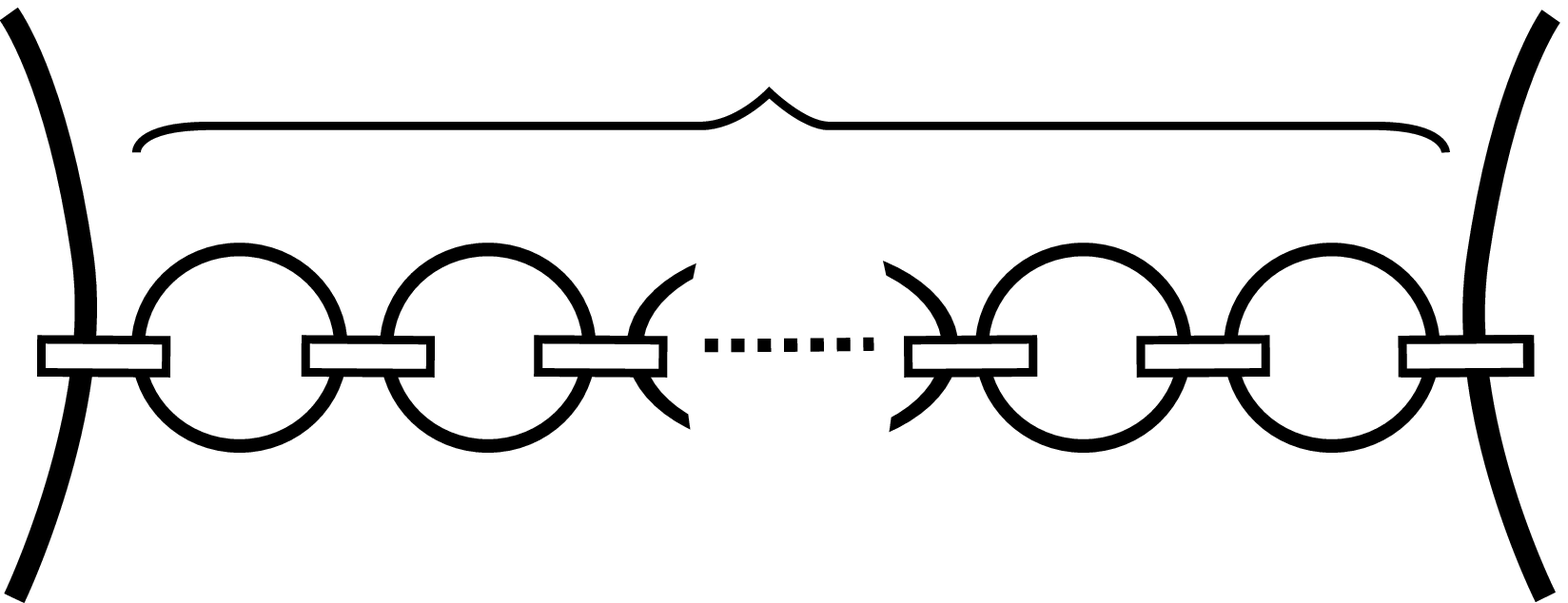}}
        \put(-75,+47){$2k$}
         \put(-86,+7){$n$}
         \put(-108,+7){$n$}
             \put(-23,+7){$n$}
           \put(-40,+7){$n$}
             \put(-86,+30){$n$}
         \put(-108,+30){$n$}
             \put(-23,+30){$n$}
           \put(-40,+30){$n$}
         \put(0,40){$n$}
         \put(-135,40){$n$} 
   \end{minipage}
\end{eqnarray*}}
   
We apply the bubble expansion formula $k$ times on the previous equation to obtain:
\small{
 \begin{eqnarray*}
 \left(
   \begin{minipage}[h]{0.1\linewidth}
        \hspace{1pt}
        \scalebox{0.3}{\includegraphics{halfbubble}}
             \put(-35,+28){\footnotesize{$n$}}
             \put(-1,+28){\footnotesize{$n$}}
   \end{minipage} \hspace{0pt} \right)^{\otimes (2k+1)}&=&\hspace{15pt}
   \begin{minipage}[h]{0.28\linewidth}
         \vspace{0pt}
         \scalebox{0.265}{\includegraphics{gn_bubbles}}
        \put(-75,+47){$2k$}
         \put(-86,+7){$n$}
         \put(-108,+7){$n$}
             \put(-23,+7){$n$}
           \put(-40,+7){$n$}
             \put(-86,+30){$n$}
         \put(-108,+30){$n$}
             \put(-23,+30){$n$}
           \put(-40,+30){$n$}
         \put(0,40){$n$}
         \put(-135,40){$n$} 
   \end{minipage}\\&=&\sum\limits_{i_1=0}^{n}\sum\limits_{i_2=0}^{i_1}...\sum\limits_{i_k=0}^{i_{k-1}}\left\lceil 
\begin{array}{cc}
n & n \\ 
n & n%
\end{array}%
\right\rceil _{i_1}
\frac{\Delta_{2n}}{\Delta_{n+i_1}}
\prod_{j=2}^k
\left\lceil 
\begin{array}{cc}
n & i_{j-1} \\ 
n & n%
\end{array}%
\right\rceil _{i_{j}}\\&\times&
\frac{\Delta_{2n}}{\Delta_{n+i_j}}  
   \begin{minipage}[h]{0.2\linewidth}
        \hspace{5pt}
        \scalebox{0.260}{\includegraphics{gn_bubbles_4}}
         \put(-50,35){$n$}
          \put(-30,35){$i_k$}
           \put(-30,3){$i_k$}
          \put(2,35){$n$}
           \end{minipage}
  \end{eqnarray*}
}
the result then follows.\\
(2)
We apply the bubble expansion formula $k-1$ times and we obtain:

\small{
 \begin{eqnarray*}
  \left(
   \begin{minipage}[h]{0.1\linewidth}
        \hspace{1pt}
        \scalebox{0.3}{\includegraphics{halfbubble}}
             \put(-35,+28){\footnotesize{$n$}}
             \put(-1,+28){\footnotesize{$n$}}
   \end{minipage} \hspace{0pt} \right)^{\otimes (2k)}&=&\hspace{15pt}
   \begin{minipage}[h]{0.28\linewidth}
         \vspace{0pt}
         \scalebox{0.265}{\includegraphics{gn_bubbles}}
        \put(-75,+47){$2k-1$}
         \put(-86,+7){$n$}
         \put(-108,+7){$n$}
             \put(-23,+7){$n$}
           \put(-40,+7){$n$}
             \put(-86,+30){$n$}
         \put(-108,+30){$n$}
             \put(-23,+30){$n$}
           \put(-40,+30){$n$}
         \put(0,40){$n$}
         \put(-135,40){$n$} 
   \end{minipage}\\&=&\sum\limits_{i_1=0}^{n}\sum\limits_{i_2=0}^{i_1}...\sum\limits_{i_k=0}^{i_{k-1}}\left\lceil 
\begin{array}{cc}
n & n \\ 
n & n%
\end{array}%
\right\rceil _{i_1}
\frac{\Delta_{2n}}{\Delta_{n+i_1}}
\prod_{j=2}^{k-1}
\left\lceil 
\begin{array}{cc}
n & i_{j-1} \\ 
n & n%
\end{array}%
\right\rceil _{i_{j}}\\&\times&
\frac{\Delta_{2n}}{\Delta_{n+i_j}}\left\lceil 
\begin{array}{cc}
n & i_{k-1} \\ 
n & n%
\end{array}%
\right\rceil _{i_{k}}  
   \begin{minipage}[h]{0.19\linewidth}
        \vspace{0pt}
        \scalebox{0.260}{\includegraphics{gn_bubbles_5}}
         \put(-57,38){$n$}
          \put(-37,33){$i_{k}$}
           \put(-37,3){$i_{k}$}
          \put(2,35){$n$}
           \end{minipage}.
  \end{eqnarray*}
}

\end{proof}

\begin{lemma}
\label{technical lemma}
\begin{enumerate}
\item Let $k\geq 1$. Then,
 \small{
 \begin{eqnarray*}
 \sum\limits_{i_1=0}^{n}\sum\limits_{i_2=0}^{i_1}...\sum\limits_{i_k=0}^{i_{k-1}}E_{n,i_1,...,i_k}\begin{minipage}[h]{0.19\linewidth}
        \vspace{0pt}
        \scalebox{0.260}{\includegraphics{gn_bubbles_4}}
         \put(-50,35){$n$}
          \put(-30,35){$i_k$}
           \put(-30,3){$i_k$}
          \put(2,35){$n$}
           \end{minipage}= \sum\limits_{i_k=0}^{n}\sum\limits_{i_{k-1}=i_k}^{n}...\sum\limits_{i_1=i_2}^{n}E_{n,i_1,...,i_k}\begin{minipage}[h]{0.19\linewidth}
        \vspace{0pt}
        \scalebox{0.260}{\includegraphics{gn_bubbles_4}}
        \put(-50,35){$n$}
          \put(-30,35){$i_k$}
           \put(-30,3){$i_k$}
          \put(2,35){$n$}
           \end{minipage}.
  \end{eqnarray*}
}
Moreover,
\begin{eqnarray*}
E_{n,i_1,..,i_k}&=&(-1)^{k n+\sum\limits_{j=1}^{k} i_j }q^{ k n/2+ \sum\limits_{j=1}^{k}(i_j(i_j/2+1))}\\ &\times&
 \frac{(q;q)^{4k+2}_{n}(q;q)_{3n-i_1+1}}{(q;q)^{k+1}_{2n} (q;q)_{2n+1}(q;q)_{n-i_1}(q;q)^2_{n-i_k}(q;q)^2_{i_k}}
\\ &\prod\limits_{j=2}^{k}&\frac{(q;q)_{i_{j-1}-i_j +2n+1}}{(q;q)_{i_{j-1}-i_j}(q;q)_{n+i_{j-1}}(q;q)^2_{n-i_{j-1}}(q;q)_{n+i_{j-1}+1}}\prod\limits_{j=1}^{k}\frac{\Delta_{2n}}{\Delta_{n+i_j}}
\end{eqnarray*}

\item For $k\geq1$, we have
 \small{
 \begin{eqnarray*}
 \sum\limits_{i_1=0}^{n}\sum\limits_{i_2=0}^{i_1}...\sum\limits_{i_k=0}^{i_{k-1}}P_{n,i_1,...,i_k}\begin{minipage}[h]{0.19\linewidth}
        \vspace{0pt}
        \scalebox{0.260}{\includegraphics{gn_bubbles_5}}
         \put(-57,38){$n$}
          \put(-37,33){$i_{k}$}
           \put(-37,3){$i_{k}$}
          \put(2,35){$n$}
           \end{minipage}= \sum\limits_{i_k=0}^{n}\sum\limits_{i_{k-1}=i_k}^{n}...\sum\limits_{i_1=i_2}^{n}P_{n,i_1,...,i_k}\begin{minipage}[h]{0.19\linewidth}
        \vspace{0pt}
        \scalebox{0.260}{\includegraphics{gn_bubbles_5}}
         \put(-57,38){$n$}
          \put(-37,33){$i_{k}$}
           \put(-37,3){$i_{k}$}
          \put(2,35){$n$}
           \end{minipage}.
  \end{eqnarray*}
}
Moreover,
\begin{eqnarray*}
P_{n,i_1,..,i_k}&=&(-1)^{k n+\sum\limits_{j=1}^{k} i_j }q^{ k n/2+ \sum\limits_{j=1}^{k}(i_j(i_j/2+1))}\\ &\times&
 \frac{(q;q)^{4k+2}_{n}(q;q)_{3n-i_1+1}}{(q;q)^{k+1}_{2n} (q;q)_{2n+1}(q;q)_{n-i_1}(q;q)^2_{n-i_k}(q;q)^2_{i_k}}
\\ &\prod\limits_{j=2}^{k}&\frac{(q;q)_{i_{j-1}-i_j +2n+1}}{(q;q)_{i_{j-1}-i_j}(q;q)_{n+i_{j-1}}(q;q)^2_{n-i_{j-1}}(q;q)_{n+i_{j-1}+1}}\prod\limits_{j=1}^{k-1}\frac{\Delta_{2n}}{\Delta_{n+i_j}}
\end{eqnarray*}
\end{enumerate} 
\end{lemma}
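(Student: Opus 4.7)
The first assertion in each part is an immediate application of Fubini's theorem: both sides sum the same scalar-valued function over the same set of tuples $\{(i_1,\ldots,i_k) : n \geq i_1 \geq \cdots \geq i_k \geq 0\}$, with the iteration ordered outward from $i_1$ on the left and outward from $i_k$ on the right. The skein-element factor depends only on $i_k$, so it does not affect the reindexing.

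For the explicit closed forms, the plan is to substitute the bubble coefficients from Theorem \ref{main111} into the definitions of $E_{n,i_1,\ldots,i_k}$ and $P_{n,i_1,\ldots,i_k}$ from Lemma \ref{mainlemma} and then pass from $A$ to $q$. The first bubble coefficient uses parameters $(m,n,k,l,i) = (n,n,n,n,i_1)$; for $j = 2,\ldots,k$ the $j$-th bubble coefficient uses $(n, i_{j-1}, n, n, i_j)$. Under the standard change of variable $A^{-4} = q$, each $\Delta_n$ becomes $(-1)^n q^{-n/2}(1-q^{n+1})/(1-q)$, consecutive products of $\Delta$'s telescope into ratios of $q$-Pochhammer symbols via the identity $\prod_{j=0}^{r-1}(1-q^{a+1+j}) = (q;q)_{a+r}/(q;q)_a$, and the quantum binomial ${n \brack i_j}_{A^4}$ becomes the ordinary $q$-binomial coefficient. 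The $(-A^2)^{i_j(i_j-n)}$ factors, together with the $(-1)^n$ and $q^{-n/2}$ pieces from each $\Delta_n$, combine to give the overall sign $(-1)^{kn + \sum_j i_j}$ and the $q$-exponent $kn/2 + \sum_j i_j(i_j/2 + 1)$.

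After this substitution the resulting expression splits naturally into three pieces: the $n$- and $i_1$-dependent contributions yield the global prefactor displayed in the statement; the contributions indexed by consecutive pairs $(i_{j-1}, i_j)$ for $j = 2, \ldots, k$ combine into the displayed product over $j$; and the factors $\Delta_{2n}/\Delta_{n+i_j}$ are carried through unconverted. Part 2 is completely parallel: $P_{n,i_1,\ldots,i_k}$ contains the same $k$ bubble coefficients as $E_{n,i_1,\ldots,i_k}$, and the only difference is the absence of the final factor $\Delta_{2n}/\Delta_{n+i_k}$, which is precisely what distinguishes the two closed forms.

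The main obstacle is the purely combinatorial bookkeeping. Each bubble coefficient is a quotient of five separate $\Delta$-products along with a quantum binomial and a sign; when multiplied across $j = 1, \ldots, k$ and combined with the $\Delta_{2n}/\Delta_{n+i_j}$ factors, one must carefully track the exact range of each $\Delta$-product, every source of $(-1)$ and half-integer power of $q$, and how adjacent products telescope into clean $q$-Pochhammer symbols. A systematic expansion, organized either by induction on $k$ or by peeling off one bubble coefficient at a time, reduces the verification to a routine but lengthy algebraic manipulation.
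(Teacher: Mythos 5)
Your proposal follows essentially the same route as the paper: the paper also treats the interchange of summations as a straightforward reindexing of the sum over tuples $n\geq i_1\geq \cdots \geq i_k\geq 0$, and it likewise obtains the closed forms by rewriting each bubble coefficient of Theorem \ref{main111} (with exactly the parameter specializations you list) as a ratio of $q$-Pochhammer symbols via a product formula for quantum integers, then multiplying across $j=1,\ldots,k$ and carrying the factors $\Delta_{2n}/\Delta_{n+i_j}$ along unconverted, with part (2) treated as the identical computation minus the final $\Delta_{2n}/\Delta_{n+i_k}$. The only difference is bookkeeping detail (the paper records the intermediate single-bubble formula explicitly), not substance.
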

\begin{proof}
\begin{enumerate}
\item
Using the fact that 

\begin{equation*}
\prod\limits_{i=0}^{j}[n-i]=q^{(2 + 3 j + j^2 - 2 n - 2 j n)/4} (1 - q)^{-1 - j}\frac{(q;q)_n}{(q;q)_{n-j-1}}
\end{equation*}

one obtains :

\begin{equation*}
\label{eq}
\left\lceil 
\begin{array}{cc}
n & a \\ 
n & n%
\end{array}%
\right\rceil _{b}=(-1)^{n+b}q^{b/2+b^2-n/2} 
\frac{(q; q)^2_a(q;q)^4_{n}(q;q)_{1+a-b+2n}}{(q;q)_{a-b}(q ;q)^2_{b}(q ;q)_{2n}(q ;q)_{a+n}(q ;q)_{1+a+n}(q ;q)^2_{-b+n}}.
\end{equation*}
This implies,

\begin{eqnarray*}
\left\lceil 
\begin{array}{cc}
n & n \\ 
n & n%
\end{array}%
\right\rceil _{i_1}
\prod\limits_{j=2}^{k}
\left\lceil 
\begin{array}{cc}
n & i_{j-1} \\ 
n & n%
\end{array}%
\right\rceil _{i_j}
&=&(-1)^{k n+\sum\limits_{j=1}^{k} i_j }q^{ k n/2+ \sum\limits_{j=1}^{k}(i_j(i_j/2+1))}\\ &\times&
 \frac{(q;q)^{4k+2}_{n}(q;q)_{3n-i_1+1}}{(q;q)^{k+1}_{2n} (q;q)_{2n+1}(q;q)_{n-i_1}(q;q)^2_{n-i_k}(q;q)^2_{i_k}}
\\ &\prod\limits_{j=2}^{k}&\frac{(q;q)_{i_{j-1}-i_j +2n+1}}{(q;q)_{i_{j-1}-i_j}(q;q)_{n+i_{j-1}}(q;q)^2_{n-i_{j-1}}(q;q)_{n+i_{j-1}+1}}  
\end{eqnarray*}
On the other hand, one has
\begin{equation*}
\sum\limits_{i_1=0}^{n}\sum\limits_{i_2=0}^{i_1}...\sum\limits_{i_k=0}^{i_{k-1}}F(i_1,...,i_k)=\sum\limits_{i_k=0}^{n}\sum\limits_{i_{k-1}=i_k}^{n}...\sum\limits_{i_1=i_2}^{n}F(i_1,...,i_k)
\end{equation*}
The result then follows.
\item The proof is similar to $(1)$.
\end{enumerate}
\end{proof}

\begin{theorem}
\label{technical}
The tail of the graph $\Phi_{k,u}$ is given by
\begin{equation*}
T_{\Phi_{k,u}}(q)=(q;q)^2_\infty\sum\limits_{l_1=0}^{\infty}...\sum\limits_{l_k=0}^{\infty}\sum\limits_{p_1=0}^{\infty}...\sum\limits_{p_u=0}^{\infty}g(q;l_1,...,l_k)g(q;p_1,...,p_u)(q;q)_{i+j}
\end{equation*}
where 
\begin{equation*}
g(q;l_1,...,l_k)=\frac{q^{\sum\limits_{j=1}^{k}(i_j(i_j+1))}}{(q;q)^2_{l_{k}}\prod\limits_{j=1}^{k-1}(q;q)_{l_j}}
\end{equation*}
with $i_j=\sum\limits_{s=j}^{k}l_s$.
\end{theorem}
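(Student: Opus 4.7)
The plan is to compute $T_{\Phi_{k,u}}(q)$ by invoking Theorem \ref{cody thm}, which reduces the problem to analyzing, in the limit $n\to\infty$, the skein element $S_B^{(n)}(\Phi_{k,u})\in\mathcal{S}(S^2)$. The key structural observation is that $\Phi_{k,u}$ decomposes as two ``stacks'' of half-bubbles of lengths $2k$ and $2u$ respectively, coming from the $(2k+1)$- and $(2u+1)$-twist regions of the pretzel, joined through a central trivalent configuration produced by the middle $2$-twist.

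First I would apply Lemma \ref{mainlemma}(2) separately to each of the two stacks. This expresses $S_B^{(n)}(\Phi_{k,u})$ as a double nested sum of the form
\begin{equation*}
\sum_{i_1\geq\cdots\geq i_k\geq 0}\;\sum_{j_1\geq\cdots\geq j_u\geq 0} P_{n,i_1,\ldots,i_k}\; P_{n,j_1,\ldots,j_u}\;\Gamma_n(i_k,j_u),
\end{equation*}
where $\Gamma_n(i_k,j_u)$ is the ``core'' colored trivalent skein element remaining after both stacks have been reduced to their terminal bubbles of colors $i_k$ and $j_u$. Next I would evaluate $\Gamma_n(i_k,j_u)$ in $\mathcal{S}(S^2)$ by combining the tetrahedral identity \eqref{firsty} with the explicit formulas for the theta and tetrahedron coefficients; the resulting closed form contains, among the cancellations, a single surviving factor that becomes $(q;q)_{i_k+j_u}$ after the $n\to\infty$ limit is taken. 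This is precisely the mixing factor $(q;q)_{i+j}$ appearing in the statement.

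Then I would apply Lemma \ref{technical lemma}(2) to reverse the order of summation on each nested sum, obtaining the explicit product formula for $P_{n,i_1,\ldots,i_k}$ displayed there. Performing the substitution $l_j=i_{j-1}-i_j$ for $j\geq 2$ together with $l_k=i_k$ (and symmetrically $p_j$ for the second chain) rewrites the nested sums $\sum_{i_k=0}^n\sum_{i_{k-1}=i_k}^n\cdots$ as independent sums over $l_1,\ldots,l_k\geq 0$ of $\min(n,\,\cdot)$-truncated indices. With this substitution, the quadratic exponent of $q$ in $P_{n,\cdot}$ becomes $\sum_{j=1}^k i_j(i_j+1)$ with $i_j=\sum_{s=j}^k l_s$, matching the exponent in $g(q;l_1,\ldots,l_k)$. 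A termwise $n\to\infty$ passage to the limit then converts the ratios of quantum integers into $q$-Pochhammer symbols: all factors of the form $(q;q)_{2n+\cdot}/(q;q)_{2n}$ tend to $1$, the $(q;q)_{n}^{4k+2}/(q;q)_{n-\cdot}$ factors collapse to pure $(q;q)_\infty$-type prefactors, and what remains is exactly $g(q;l_1,\ldots,l_k)$. The two $(q;q)_\infty$ prefactors then combine with the $(q;q)_\infty^{-1}$ that we picked up from normalizing by $\Delta_n$ (Remark \ref{change remark}) to produce the overall $(q;q)_\infty^2$ of the statement.

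The main obstacle will be the bookkeeping for $\Gamma_n(i_k,j_u)$: one must carefully identify which theta/tetrahedron factors cancel against the trailing $\Delta_{2n}/\Delta_{n+i_j}$ ratios in $P_{n,\cdot}$ and $P_{n,\cdot}$, and verify that the residual quantum binomial coefficient at the center simplifies, in the $n\to\infty$ limit, to $(q;q)_{i_k+j_u}$. A secondary technical point is to justify the interchange of the limit with the infinite nested sums; this follows from the stability established in Theorem \ref{cody thm} together with the fact that, for each fixed $(l_1,\ldots,l_k,p_1,\ldots,p_u)$, the summand stabilizes once $n$ exceeds $\max(i_1,j_1)$.
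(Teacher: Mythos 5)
Your proposal follows essentially the same route as the paper's own proof: reduce to $S_B^{(n)}(\Phi_{k,u})$ via Theorem \ref{cody thm}, expand the two bubble stacks with Lemma \ref{mainlemma}, evaluate the remaining central element explicitly, reverse the nested sums with Lemma \ref{technical lemma}, reindex by $l_j=i_{j-1}-i_j$, $l_k=i_k$, and pass to the limit termwise. The only cosmetic differences are that the paper evaluates the core $\Gamma_{n,i_k,j_u}$ via Lemmas 6.1 and 6.6 of \cite{Hajij1} together with the Masbaum--Vogel theta formula rather than directly through identity (\ref{firsty}), and your attribution of a factor $(q;q)_\infty^{-1}$ to the $\Delta_n$-normalization is imprecise (one has $1/\Delta_n\doteq_n$ a unit times $1-q$, and the $(q;q)^2_\infty$ prefactor instead survives as $(q;q)_n^2$ from the cancellations), but neither point changes the argument.
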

\begin{proof}
Using Theorem \ref{cody thm}, we have

\begin{equation}
T_{\Phi_{k,u}}(q)\doteq_n \frac{ S_B^{(n)}(\Phi_{k,u})}{\Delta_{n}}\bigg|_{A=q^
{1/4}}
\end{equation} 
where $S_B^{(n)}(\Phi_{k,u})$ is the skein element given in Figure \ref{mainskein}.

\begin{figure}[h]
  \centering
 {\includegraphics[scale=0.1]{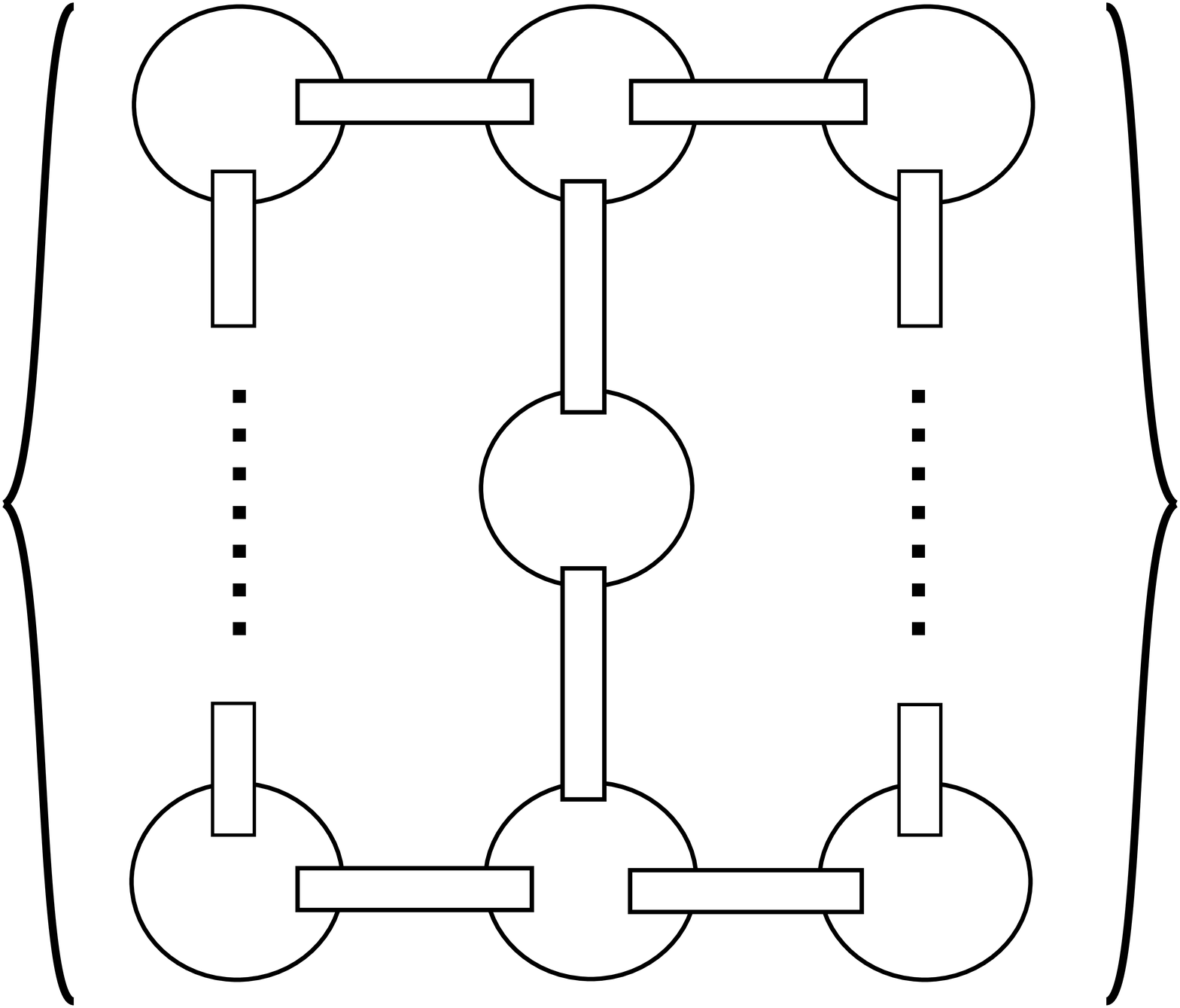}
 \put(-75,43){$n$ }
 \put(-70,73){$n$ }
 \put(-65,95){$n$ }
  \put(-25,95){$n$ }
  \put(-25,-5){$n$}
  \put(-60,-5){$n$ }
  \put(-90,-5){$n$}
  \put(-90,95){$n$}
  \put(-81,73){$n$}
  \put(-47,73){$n$}
  \put(-37,73){$n$}
  \put(-49,19){$n$}
  \put(-67,19){$n$}
  \put(-81,19){$n$}
  \put(-47,37){$n$}
  \put(-37,19){$n$}
  \put(-125,43){$2k$}
  \put(5,43){$2u$ }
  \caption{The skein element $S_B^{(n)}(\Phi_{k,u})$.}
  \label{mainskein}}
\end{figure}

Using Lemma \ref{mainlemma}, we can write
   {\footnotesize
\begin{eqnarray}
\label{kharaa}
  T_{\Phi_{k,u}}(q)
   \doteq_n \frac{1}{\Delta_n} \sum\limits_{i_1=0}^{n}\sum\limits_{i_2=0}^{i_1}...\sum\limits_{i_k=0}^{i_{k-1}} \sum\limits_{j_1=0}^{n}\sum\limits_{j_2=0}^{j_1}...\sum\limits_{j_u=0}^{j_{u-1}} P_{n,i_1,...,i_{k}} P_{n,j_1,...,j_{u}}\frac{\Delta_{2n}}{\Delta{n+i_k}}\frac{\Delta_{2n}}{\Delta_{n+j_u}}  
  \begin{minipage}[h]{0.19\linewidth}
        \hspace{5pt}
        \scalebox{0.1}{\includegraphics{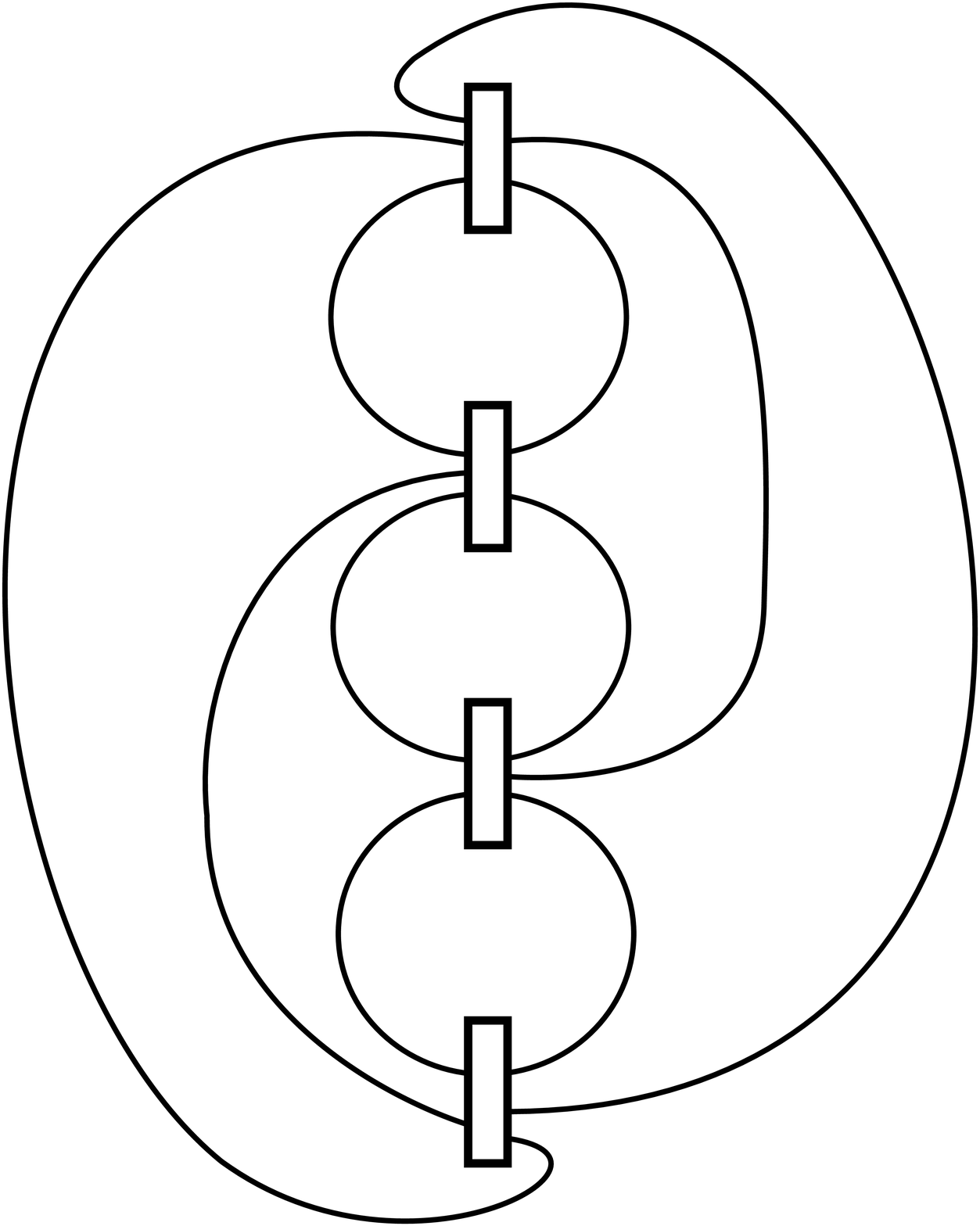}}
         \put(-98,41){$i_k$ }
 		\put(-75,76){$i_k$ }
 		\put(-92,89){$n-i_k$ }
 		\put(-48,89){$n$ }
 		\put(0,41){$j_u$ }
 		\put(-21,76){$j_u$}
 		\put(-35,60){$n$ }
 		\put(-65,55){$n$ }
 		\put(-65,27){$n$ }
 		\put(-35,36){$n-j_u$ }
        \end{minipage}
        \end{eqnarray}}
Denote the element on the right handside of (\ref{kharaa}) by $\Gamma_{n,i_k,j_u}$. Now, Lemma 6.6 in \cite{Hajij1} implies:

\begin{eqnarray}
\label{gammafirst}
  \Gamma_{n,i_k,j_u}
  = \left\lceil 
\begin{array}{cc}
i_k & n \\ 
n & n-j_u%
\end{array}%
\right\rceil _{0}\left\lceil 
\begin{array}{cc}
j_u & n \\ 
n & n-i_k%
\end{array}%
\right\rceil _{0}\left\lceil 
\begin{array}{cc}
j_u & i_k \\ 
n & n%
\end{array}%
\right\rceil _{0} \Delta_{i_k+j_u}
       \end{eqnarray}
Here,       
\begin{equation}
\label{mn1}
\left\lceil 
\begin{array}{cc}
j_u & n \\ 
n & n-i_k%
\end{array}%
\right\rceil _{0}=(-1)^{n-i_k} q^{(i_k - n)/2}  
 \frac{(q; q)_{i_k+j_u}(q;q)_{n}(q;q)_{n+i_k} (q ;q)_{2 n +j_u+ 1}}{ (q;q)_{i_k}(q;q)_{2n}(q;q)_{j_u+n}(q;q)_{n+j_u+i_k+1}}.
\end{equation}
Moreover Lemma 6.1 in \cite{Hajij1} gives,
 \begin{eqnarray*}
\left\lceil 
\begin{array}{cc}
j_u & i_k \\ 
n & n%
\end{array}%
\right\rceil _{0}\Delta_{i_k+j_u} =
\hspace{1 mm}  
    \begin{minipage}[h]{0.09\linewidth}
        \vspace{0pt}
        \scalebox{0.1}{\includegraphics{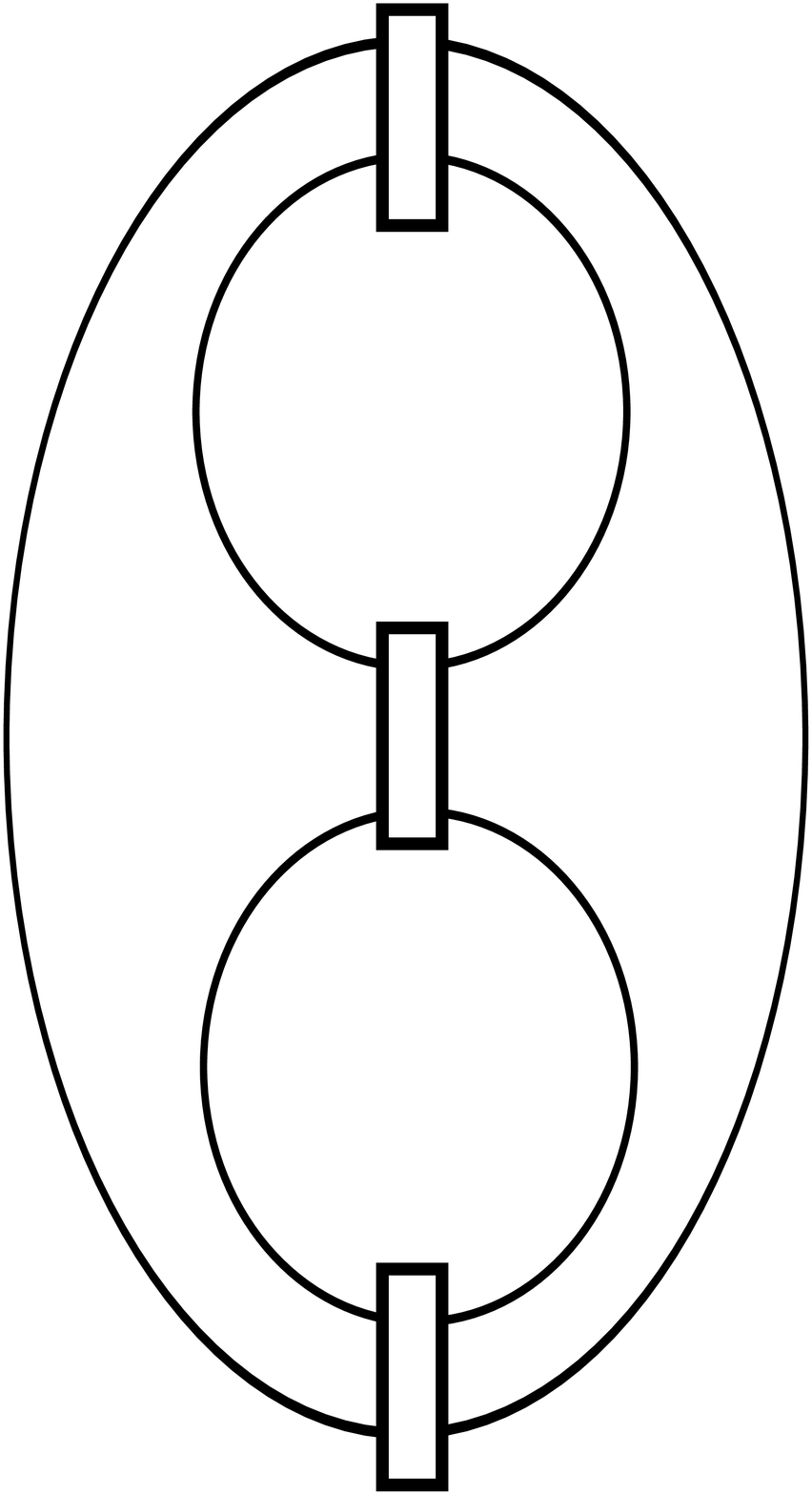}}
             \put(-24,+19){\footnotesize{$i_k$}}
             \put(-24,+69){\footnotesize{$j_u$}}
        \put(-60,+26){\footnotesize{$n$}}
   \end{minipage}
   \end{eqnarray*}

A formula for the skein element on the right hand of the previous equation can be found in \cite{MV}. Using this allows us to obtain:

\begin{equation}
\label{thisthat}
\left\lceil 
\begin{array}{cc}
j_u & i_k \\ 
n & n%
\end{array}%
\right\rceil _{0}\Delta_{i+j}=(-1)^{i_k+j_u+n} q^{-(i_k+j_u+n)/2}  
 \frac{(q;q)_{n}(q;q)_{j_u}(q;q)_{i_k} (q ;q)_{n +j_u+i_k+1}}{ (1-q)(q;q)_{i_k+n}(q;q)_{j_u+n}(q;q)_{j_u+i_k}}.
\end{equation}

Using (\ref{thisthat}) and (\ref{mn1}) in (\ref{gammafirst}) we obtain :

\begin{eqnarray}
\label{mygamma1}
  \Gamma_{n,i_k,j_u}
  = (-1)^{n} q^{-3n/2}
 \frac{(q; q)_{i_k + j_u} (q; q)_n^3 (q; 
  q)_{1 + i_k + 2 n} (q;q)_{1 + j_u + 2 n} }{
(1-q)(q; q)_{2 n}^2 (q;
   q)_{i_k + n}(q; q)_{j_u + n} (q;q)_{ 
  1 + i_k + j_u + n}}
\end{eqnarray}
One the other hand, Lemma \ref{technical lemma} implies 
{\tiny
\begin{equation}
\label{importnat}
\sum\limits_{i_1=0}^{n}...\sum\limits_{i_k=0}^{i_{k-1}} \sum\limits_{j_1=0}^{n}...\sum\limits_{j_u=0}^{j_{u-1}} P_{n,i_{k}} P_{n,j_{u}}\frac{\Delta^2_{2n}}{\Delta_{n+i_k}\Delta_{n+j_u}}\Gamma_{n,i_k,j_u}=\sum\limits_{i_k=0}^n...\sum\limits_{i_1=i_2}^n \sum\limits_{j_u=0}^n...\sum\limits_{j_{1}=j_2}^n P_{n,i_{k}} P_{n,j_{u}}\frac{\Delta^2_{2n}}{\Delta_{n+i_k}\Delta_{n+j_u}}\Gamma_{n,i_k,j_u}  
\end{equation}}

Now

\begin{align}
\label{0}
  \frac{(q;q)_n}{(q;q)_{2n}}&=\frac{\displaystyle\prod_{i=0}^{n-1}(1-q^{i+1})}{\displaystyle\prod_{i=0}^{2n-1}(1-q^{i+1})} \nonumber \\
  &=\frac{1}{\displaystyle\prod_{i=n}^{2n-1}(1-q^{i+1})}\nonumber \\
  &=\displaystyle\prod_{i=0}^{n-1}\frac{1}{(1-q^{i+n+1})}\doteq_n1.
\end{align}

Moreover,
\begin{align}
\label{1}
\frac{(q;q)_{3n-i+1}}{(q;q)_{2n+1}}
&=1-q^{2n+2}+O(2n+3)=_{n}1.
\end{align}
and
\begin{align}
\label{2}
 \frac{(q;q)_{2n+i+1}}{(q;q)_{n+i}}&=\frac{\displaystyle\prod_{k=0}^{3n+i}(1-q^{k+1})}{\displaystyle\prod_{i=0}^{n+i-1}(1-q^{k+1})}\nonumber\\&=\displaystyle\prod_{i=n+i}^{3n+i}(1-q^{k+1})\doteq_n1 .
\end{align}
Hence, using Lemma \ref{technical lemma}, the equation (\ref{mygamma1}) and the facts (\ref{0}), (\ref{1}) and (\ref{2}) in \ref{importnat} yield the equation:

\begin{equation*}
\Phi_{k,u}(q)\doteq_n (q;q)^2_n \sum\limits_{i_k=0}^n...\sum\limits_{i_1=i_2}^n \sum\limits_{j_u=0}^n...\sum\limits_{j_{1}=j_2}^n \frac{q^{\sum\limits_{p=1}^{k}(i_p(i_p+1))}}{(q;q)^2_{i_k}\prod\limits_{p=2}^{k}(q;q)_{i_{p-1}-i_p}}
\frac{q^{\sum\limits_{l=1}^{u}(j_l(j_l+1))}}{(q;q)^2_{j_u}\prod\limits_{l=2}^{u}(q;q)_{j_{l-1}-j_l}}(q;q)_{i_k+j_u}  
\end{equation*}
Now set $s_{p}=i_{p}-i_{p+1}$ for $p=1,...,k-1$ and $s_k=i_k$, we obtain $i_p=\sum\limits_{m=p}^{k}s_m$. Similarly, set $h_{l}=j_{l}-j_{l+1}$ for $l=1,...,u-1$ and $h_u=j_u$, we obtain $j_l=\sum\limits_{r=l}^{u}h_d$. Changing the indexes in the previous equation yield the result.
       
\end{proof}

\section{A Family of Pretzel Knots and Rogers-Ramanunjan Type Identities}\label{sec5}
In this section, the tail of the graph  $L_k$, for $k\geq 1$, given in Figure \ref{pretzel_2} below is computed in two methods. Note that this graph correponds to the pretzel knot $P_{k+1}$ defined in section \ref{sec3}.

\begin{figure}[H]
  \centering
 {\includegraphics[scale=0.2]{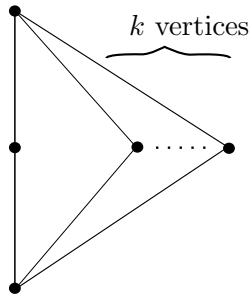}
 \put(-40,98){$k$ vertices}
  \caption{The graph $L_{k}$}
  \label{pretzel_2}}
\end{figure}

The first method utilizes the algorithm given by Masbaum and Vogel in \cite{MV} to compute the evaluation of a quantum spin network in $\mathcal{S}(S^2)$. The second method uses the bubble skein element (\ref{bubble expansion formula121}). Each method give rise to one of side of the $q$-series identities given in (\ref{new}) generalizing the false theta identity given in (\ref{falseRam}). We start first by computing the tail of the graph in Figure \ref{pretzel_2} using the techniques given in \cite{MV}.

\begin{lemma}
\label{main lemma 1}
Let $k,n\geq 1$. Then,

 \begin{eqnarray*}  
  \left( \hspace{10pt}
   \begin{minipage}[h]{0.11\linewidth}
        \vspace{0pt}
        \scalebox{0.22}{\includegraphics{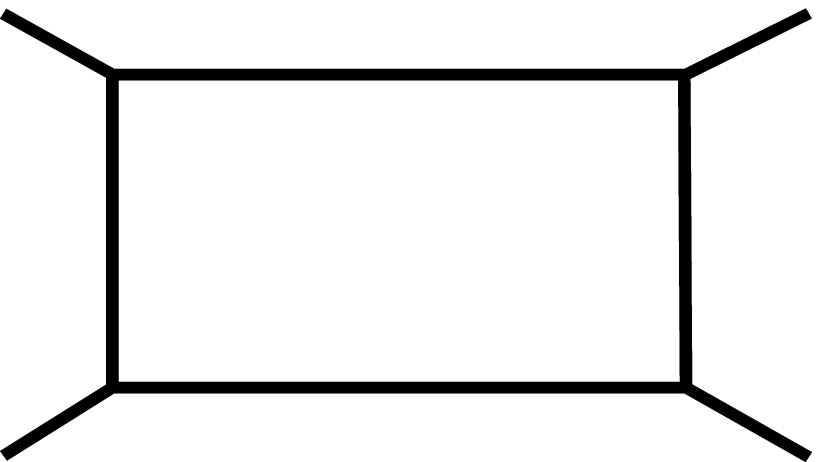}}
             \put(-29,+28){\footnotesize{$2n$}}
             \put(-29,-5){\footnotesize{$2n$}}
        \put(-60,+26){\footnotesize{$n$}}
        \put(-60,-6){\footnotesize{$n$}}
        \put(1,-6){\footnotesize{$n$}}
        \put(1,+26){\footnotesize{$n$}}
   \end{minipage} \hspace{10pt} \right)^{\otimes k}  &=& \sum\limits_{i=0}^{n} C_{n,i}
\hspace{1 mm}  
    \begin{minipage}[h]{0.09\linewidth}
        \vspace{0pt}
        \scalebox{0.36}{\includegraphics{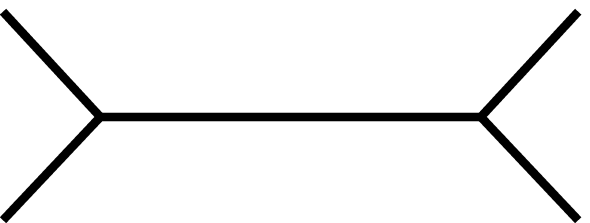}}
             \put(-29,+17){\footnotesize{$2i$}}
        \put(-60,+26){\footnotesize{$n$}}
        \put(-60,-6){\footnotesize{$n$}}
        \put(1,-6){\footnotesize{$n$}}
        \put(1,+26){\footnotesize{$n$}}
   \end{minipage}
   \end{eqnarray*}
where

\begin{equation}
C_{n,i}=\frac{\theta (2n,2n,2i)^{k}  }{\theta (n,n,2i)^{k+1}}\Delta_{2i}
\end{equation}
\end{lemma}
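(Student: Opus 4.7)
The plan is to view the bubble element as living in the relative skein module $\mathscr{D}^{n,n}_{n,n}$ of the disk with four marked points (each carrying an $f^{(n)}$), expand a single bubble in the horizontal ``spin'' basis $\{\mathcal{T}_{2i}\}_{0\le i\le n}$, and then use the multiplicative behavior of this basis under vertical composition to handle the $k$-fold tensor power.

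First, consider the case $k=1$. The element on the left has two $(n,n,2n)$ trivalent couplings joined by a $2n$-colored arc, so it is expressible as
\begin{equation*}
\left(\text{single bubble}\right) \;=\; \sum_{i=0}^{n} c_{n,i}\,\mathcal{T}_{2i}.
\end{equation*}
To extract $c_{n,i}$ we pair against $\mathcal{T}_{2i}$ using the bilinear closure form on $\mathscr{D}^{n,n}_{n,n}$. The self-pairing is computed by stacking $\mathcal{T}_{2i}$ with itself and closing in $S^2$: a bubble of two parallel $f^{(n)}$-strands between two $(n,n,2i)$-vertices collapses via the standard bubble identity to $\theta(n,n,2i)/\Delta_{2i}$ times a single $2i$-edge, and the remaining closure is a $\theta(n,n,2i)$-graph, yielding $\langle \mathcal{T}_{2i},\mathcal{T}_{2i}\rangle = \theta(n,n,2i)^2/\Delta_{2i}$. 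The cross-pairing with the bubble simplifies, after applying the identity \eqref{firsty} to the two $(n,n,2n)$ vertices, to a $(2n,2n,2i)$ theta-evaluation, so $\langle B,\mathcal{T}_{2i}\rangle = \theta(2n,2n,2i)$. Hence
\begin{equation*}
c_{n,i}\;=\;\frac{\langle B,\mathcal{T}_{2i}\rangle}{\langle \mathcal{T}_{2i},\mathcal{T}_{2i}\rangle}\;=\;\frac{\theta(2n,2n,2i)\,\Delta_{2i}}{\theta(n,n,2i)^2}.
\end{equation*}

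Next, the key structural fact for iterating is that the horizontal basis is orthogonal with respect to vertical composition:
\begin{equation*}
\mathcal{T}_{2i}\cdot\mathcal{T}_{2j}\;=\;\delta_{ij}\,\frac{\theta(n,n,2i)}{\Delta_{2i}}\,\mathcal{T}_{2i}.
\end{equation*}
This follows because the composite produces, in its middle region, a bubble of two parallel $f^{(n)}$-strands capped by vertices $(n,n,2i)$ and $(n,n,2j)$; admissibility of the Jones-Wenzl idempotents forces $i=j$, and the bubble identity collapses the diagram to $\theta(n,n,2i)/\Delta_{2i}$ times $\mathcal{T}_{2i}$.

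Iterating, one obtains
\begin{equation*}
\left(\text{bubble}\right)^{\otimes k}\;=\;\sum_{i=0}^{n} c_{n,i}^{\,k}\left(\frac{\theta(n,n,2i)}{\Delta_{2i}}\right)^{\!k-1}\!\mathcal{T}_{2i}\;=\;\sum_{i=0}^{n}\frac{\theta(2n,2n,2i)^k\,\Delta_{2i}}{\theta(n,n,2i)^{k+1}}\,\mathcal{T}_{2i},
\end{equation*}
which is the desired formula with $C_{n,i}=\theta(2n,2n,2i)^k/\theta(n,n,2i)^{k+1}\cdot\Delta_{2i}$. The main obstacle is the first step, namely the careful identification of the numerator $\langle B,\mathcal{T}_{2i}\rangle$ with $\theta(2n,2n,2i)$: this requires tracking the $(n,n,2n)$ trivalent insertions through the Masbaum--Vogel evaluation and using \eqref{firsty} to eliminate the resulting triangle, after which the remaining steps are purely algebraic.
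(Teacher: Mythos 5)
Your proposal is correct and takes essentially the same route as the paper: expand one bubble unit in the horizontal basis $\{\mathcal{T}_{2i}\}$ and iterate using the fact that attaching a further bubble to $\mathcal{T}_{2i}$ just multiplies it by $\theta(2n,2n,2i)/\theta(n,n,2i)$ (the paper's $P_{n,i}$, which your $c_{n,i}\cdot\theta(n,n,2i)/\Delta_{2i}$ reproduces exactly). The only difference is bookkeeping: the paper writes the one-bubble coefficient as a product of two tetrahedral coefficients over $\theta(2n,2n,2i)\,\theta(n,n,2i)^2$ and then invokes the degenerate identity $Tet=\theta(2n,2n,2i)$, whereas you extract the same coefficient $B_{n,i}=\theta(2n,2n,2i)\Delta_{2i}/\theta(n,n,2i)^{2}$ from the orthogonal bilinear pairing, and both computations combine to the stated $C_{n,i}$.
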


\begin{proof}
Note that
 \begin{eqnarray*}
   \begin{minipage}[h]{0.11\linewidth}
        \vspace{0pt}
        \scalebox{0.22}{\includegraphics{bubble_as_spin}}
             \put(-29,+28){\footnotesize{$2n$}}
             \put(-29,-5){\footnotesize{$2n$}}
        \put(-60,+26){\footnotesize{$n$}}
        \put(-60,-6){\footnotesize{$n$}}
        \put(1,-6){\footnotesize{$n$}}
        \put(1,+26){\footnotesize{$n$}}
   \end{minipage}
   &=& \sum\limits_{i=0}^{n} B_{n,i} 
\hspace{1 mm}  
    \begin{minipage}[h]{0.09\linewidth}
        \vspace{0pt}
        \scalebox{0.36}{\includegraphics{horizantal_basis}}
             \put(-29,+17){\footnotesize{$2i$}}
        \put(-60,+26){\footnotesize{$n$}}
        \put(-60,-6){\footnotesize{$n$}}
        \put(1,-6){\footnotesize{$n$}}
        \put(1,+26){\footnotesize{$n$}}
   \end{minipage}
   \end{eqnarray*}
where

\begin{eqnarray}
B_{n,i}=\frac{ Tet\left[ 
\begin{array}{ccc}
2i & n & n \\ 
n & 2n & 2n%
\end{array}%
\right] Tet\left[ 
\begin{array}{ccc}
2i & 2n & 2n \\ 
n & n & n%
\end{array}%
\right] }{\theta (2n,2n,2i)(\theta (n,n,2i))^2}\Delta _{2i}
\end{eqnarray}
However,

\begin{eqnarray}
\label{22222}
Tet\left[ 
\begin{array}{ccc}
2i & n & n \\ 
n & 2n & 2n%
\end{array}
\right] =Tet\left[ 
\begin{array}{ccc}
2i & 2n & 2n \\ 
n & n & n%
\end{array}%
\right]=\theta(2n,2n,2i).
\end{eqnarray}

Hence
\begin{equation*}
B_{n,i}=\frac{\theta(2n,2n,2i)}{\theta(n,n,2i)^2}\Delta_{2i}
\end{equation*}
Moreover,
\begin{eqnarray*}
    \begin{minipage}[h]{0.14\linewidth}
        \vspace{0pt}
        \scalebox{0.36}{\includegraphics{horizantal_basis}}
             \put(-29,+17){\footnotesize{$2i$}}
        \put(-60,+26){\footnotesize{$n$}}
        \put(-60,-6){\footnotesize{$n$}}
        \put(1,-6){\footnotesize{$n$}}
        \put(1,+26){\footnotesize{$n$}}
   \end{minipage}\otimes \hspace{5pt} \left( \hspace{10pt}
   \begin{minipage}[h]{0.11\linewidth}
        \vspace{0pt}
        \scalebox{0.22}{\includegraphics{bubble_as_spin}}
             \put(-29,+28){\footnotesize{$2n$}}
             \put(-29,-5){\footnotesize{$2n$}}
        \put(-60,+26){\footnotesize{$n$}}
        \put(-60,-6){\footnotesize{$n$}}
        \put(1,-6){\footnotesize{$n$}}
        \put(1,+26){\footnotesize{$n$}}
   \end{minipage} \hspace{10pt} \right)^{\otimes k}
   &=& (P_{n,i})^k
\hspace{1 mm}  
    \begin{minipage}[h]{0.09\linewidth}
        \vspace{0pt}
        \scalebox{0.36}{\includegraphics{horizantal_basis}}
             \put(-29,+17){\footnotesize{$2i$}}
        \put(-60,+26){\footnotesize{$n$}}
        \put(-60,-6){\footnotesize{$n$}}
        \put(1,-6){\footnotesize{$n$}}
        \put(1,+26){\footnotesize{$n$}}
   \end{minipage}
   \end{eqnarray*}
where 
\begin{eqnarray*}
P_{n,i}=\frac{ Tet\left[ 
\begin{array}{ccc}
2i & n & n \\ 
n & 2n & 2n%
\end{array}%
\right] Tet\left[ 
\begin{array}{ccc}
2i & 2n & 2n \\ 
n & n & n%
\end{array}%
\right]  }{\theta (2n,2n,2i)\theta (n,n,2i)}
\end{eqnarray*}

However, equation (\ref{22222}) implies:

\begin{eqnarray*}
P_{n,i}=\frac{ \theta(2n,2n,2i) }{\theta (n,n,2i)}
\end{eqnarray*}
Thus,

\begin{eqnarray*}
    \begin{minipage}[h]{0.34\linewidth}
        \vspace{0pt}
        \scalebox{0.36}{\includegraphics{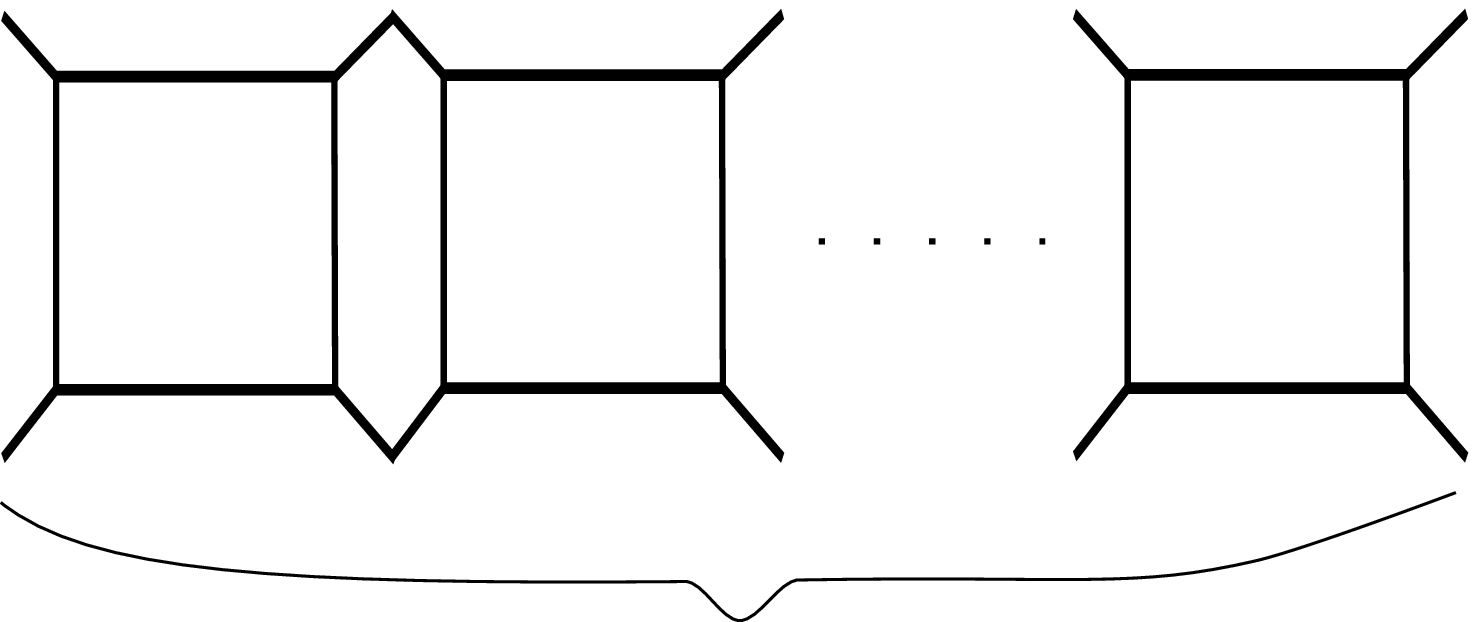}}
         \put(-29,+12){\footnotesize{$2n$}}
        \put(-29,+60){\footnotesize{$2n$}}
         \put(-40,64){\footnotesize{$n$}}
        \put(-60,-6){\footnotesize{$k$ copies}}
        \put(-5,+35){\footnotesize{$n$}}
        \put(-35,+35){\footnotesize{$n$}}
        \put(-45,+23){\footnotesize{$n$}}
         \put(-139,+12){\footnotesize{$2n$}}
        \put(-139,+60){\footnotesize{$2n$}}
         \put(-99,+60){\footnotesize{$2n$}}
        \put(-99,+12){\footnotesize{$2n$}}
        \put(-155,+29){\footnotesize{$n$}}
   \end{minipage}
   &=&\sum\limits_{i=0}^{n} B_{n,i}
\hspace{1 mm}  
   \begin{minipage}[h]{0.14\linewidth}
        \vspace{0pt}
        \scalebox{0.36}{\includegraphics{horizantal_basis}}
             \put(-29,+17){\footnotesize{$2i$}}
        \put(-60,+26){\footnotesize{$n$}}
        \put(-60,-6){\footnotesize{$n$}}
        \put(1,-6){\footnotesize{$n$}}
        \put(1,+26){\footnotesize{$n$}}
   \end{minipage}\otimes \hspace{5pt} \left( \hspace{10pt}
   \begin{minipage}[h]{0.11\linewidth}
        \vspace{0pt}
        \scalebox{0.22}{\includegraphics{bubble_as_spin}}
             \put(-29,+28){\footnotesize{$2n$}}
             \put(-29,-5){\footnotesize{$2n$}}
        \put(-60,+26){\footnotesize{$n$}}
        \put(-60,-6){\footnotesize{$n$}}
        \put(1,-6){\footnotesize{$n$}}
        \put(1,+26){\footnotesize{$n$}}
   \end{minipage} \hspace{10pt} \right)^{\otimes k-1}\\
   &=&\sum\limits_{i=0}^{n} B_{n,i} (P_{n,i})^{k-1}\hspace{1 mm}  
   \begin{minipage}[h]{0.14\linewidth}
        \vspace{0pt}
        \scalebox{0.36}{\includegraphics{horizantal_basis}}
             \put(-29,+17){\footnotesize{$2i$}}
        \put(-60,+26){\footnotesize{$n$}}
        \put(-60,-6){\footnotesize{$n$}}
        \put(1,-6){\footnotesize{$n$}}
        \put(1,+26){\footnotesize{$n$}}
   \end{minipage}.
   \end{eqnarray*}

The result follows.
\end{proof}

\begin{proposition}
\label{side1}
For $k\geq 1$ we have
\begin{equation*}
T_{L_k}(q)=(q;q)_{\infty}^{k+1} \sum_{i=0}^{\infty}\frac{q^{i}}{(q;q)_{i}^{k+1}}.
\end{equation*}

\begin{proof}
By theorem \ref{cody thm} we know that the tail of the graph $L_k$ is determined by the skein element $S^{(n)}_B(L_{k})$. This element is equivalent to the quantum spin network in Figure \ref{pretzel_quantum}. Note that there are $k+1$ copies of the box graph labeled by $2n$ and $n$ in Figure \ref{pretzel_quantum}.  

\begin{figure}[H]
  \centering
 {\includegraphics[scale=0.35]{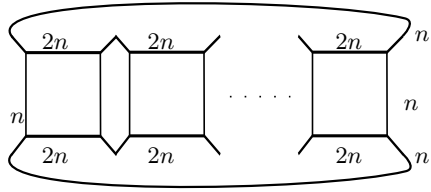}
 \put(-29,+53){\footnotesize{$2n$}}
        \put(-29,10){\footnotesize{$2n$}}
        \put(-100,10){\footnotesize{$2n$}}
        \put(-140,10){\footnotesize{$2n$}}
        \put(-100,+53){\footnotesize{$2n$}}
        \put(-140,53){\footnotesize{$2n$}}
        \put(-152,+25){\footnotesize{$n$}}
        \put(-3,30){\footnotesize{$n$}}
        \put(1,10){\footnotesize{$n$}}
        \put(1,+56){\footnotesize{$n$}}
  \caption{The quantum spin network corresponding to graph $L_k.$}
  \label{pretzel_quantum}}
\end{figure}
\end{proof}
By Lemma \ref{main lemma 1} we have
\begin{equation}
\label{graph evaluation}
S^{(n)}_B(L_{k}) = \sum_{i=0}^n \frac{\theta(2n,2n,2i)^{k+1}}{\theta(n,n,2i)^{k+1}}\Delta_{2i}
\end{equation}
However,
\begin{equation}
\label{special theta}
\theta(n,n,2i)=\frac{(-1)^{i+n} q^{- \frac{1}{2}(n+i)}
  (q, q)_i^2 (q, 
  q)_{-i +  n} (q, q)_{1 + i + n}}{(1 - q) (q, q)_{2 i} (q;q)_{ n}^2}.
\end{equation}
Putting (\ref{special theta}) in (\ref{graph evaluation}) and using Theorem \ref{cody thm} we obtain
\begin{equation}
T_{L_k} (q) \doteq_n S^{(n)}_B(L_{k})= \frac{1}{\Delta_n} \sum_{i=0}^n\left( \frac{(-1)^{-n}q^{n/2}(q;q)_{2n}^2(q;q)_{n-i}(q;q)_{n+i+1}}{(q;q)_{n}^2(q;q)_{2n-i}(q;q)_{2n+i+1}}\right)^{k+1}\Delta_{2i}
\end{equation}
Similar techniques to the ones used in Theorem \ref{technical} imply:
\begin{equation}
T_{L_{k}}(q) = (q;q)_{\infty}^{k+1} \sum_{i=0}^{\infty}  \frac{q^{i}}{(q;q)_i^{k+1}}
\end{equation}
\end{proposition}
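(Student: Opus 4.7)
The plan is to reduce the tail computation to a skein evaluation in $\mathcal{S}(S^2)$ and then extract the stable $n\to\infty$ limit, following the same general template used in Theorem \ref{technical}. First, I apply Theorem \ref{cody thm} to replace $T_{L_k}(q)$ by the normalized skein element $S^{(n)}_B(L_k)/\Delta_n$ (under $A=q^{1/4}$) up to the order $4(n+1)$. The next step is to reinterpret $S^{(n)}_B(L_k)$ as a quantum spin network: each vertex of $L_k$ contributes a circle colored $n$, and each of the $k+1$ edges contributes a projector $f^{(2n)}$, producing a network consisting of $k+1$ identical ``bubble'' subnetworks arranged in parallel between two common $n$-strands.

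Second, I would apply Lemma \ref{main lemma 1}, which expands a tensor product of $k$ such bubble boxes in the orthogonal basis $\{\mathcal{T}_{2i}\}$ of $\mathscr{D}^{n,n}_{n,n}$. Closing off the remaining pair of $n$-strands with one more bubble converts the $\mathcal{T}_{2i}$ basis element into a $\theta$-coefficient. After cancellation of common factors this gives the compact formula
\begin{equation*}
S^{(n)}_B(L_k) \;=\; \sum_{i=0}^{n}\frac{\theta(2n,2n,2i)^{k+1}}{\theta(n,n,2i)^{k+1}}\,\Delta_{2i},
\end{equation*}
which is the skein-level input to the whole calculation.

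Third, I substitute the Masbaum--Vogel formula for $\theta(n,n,2i)$, which is a ratio of $q$-Pochhammer symbols. The resulting summand is a single explicit rational function of $q$, raised to the power $k+1$, times $\Delta_{2i}$ and divided by $\Delta_n$. At this point the argument is purely a $q$-series stabilization.

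Finally, I pass to the limit $n\to\infty$ using exactly the kind of stability identities already exploited in the proof of Theorem \ref{technical}, namely $(q;q)_{2n}/(q;q)_{n}\doteq_n 1$, $(q;q)_{2n\pm j+1}/(q;q)_{n\pm j}\doteq_n 1$ for fixed $j$, together with $(q;q)_{n+j+1}\to(q;q)_\infty$ and $\Delta_{2i}/(1-q)\to 1$ for fixed $i$. The main obstacle will be bookkeeping: the $(k+1)$-st power multiplies the number of Pochhammer factors to track, and I need to organize them so that exactly $(q;q)_\infty^{k+1}$ survives as the global prefactor and $q^i/(q;q)_i^{k+1}$ survives in each summand. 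I expect the factor $(q;q)_{2n}^{2(k+1)}/(q;q)_n^{2(k+1)}$ coming from the numerator of $\theta(2n,2n,2i)^{k+1}/\theta(n,n,2i)^{k+1}$, together with $1/\Delta_n$, to be the source of $(q;q)_\infty^{k+1}$, while the surviving $q^{(k+1)(n/2-(n+i)/2)}=q^{-(k+1)i/2}$ combines with the overall sign and the residual $(q;q)_{n+i+1}^{k+1}/(q;q)_{n-i}^{k+1}$-style factors to produce the clean expression $q^i/(q;q)_i^{k+1}$. Once these cancellations are verified, the stated identity follows directly.
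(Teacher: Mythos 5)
Your outline reproduces the paper's proof step for step up to the skein-level formula: Theorem \ref{cody thm} reduces $T_{L_k}$ to $S^{(n)}_B(L_k)/\Delta_n$, Lemma \ref{main lemma 1} expands the $k$ parallel bubbles in the basis elements $\mathcal{T}_{2i}$, closing with the remaining bubble turns $\mathcal{T}_{2i}$ into $\theta(2n,2n,2i)$, and one arrives at $S^{(n)}_B(L_k)=\sum_{i=0}^{n}\theta(2n,2n,2i)^{k+1}\theta(n,n,2i)^{-(k+1)}\Delta_{2i}$, which is exactly (\ref{graph evaluation}). Up to that point there is nothing to object to.

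The gap is in the final stabilization, and it is a real one: the cancellations you predict cannot occur the way you describe them. After the common factors $(q;q)_i^2$ and $(q;q)_{2i}$ cancel, the ratio is $\theta(2n,2n,2i)/\theta(n,n,2i)=(-1)^{n}q^{-n/2}\,\frac{(q;q)_{2n-i}(q;q)_{2n+i+1}(q;q)_n^2}{(q;q)_{n-i}(q;q)_{n+i+1}(q;q)_{2n}^2}$, so for fixed $i$ every surviving Pochhammer ratio is $\doteq_n 1$ up to corrections of order at least $q^{\,n-i+1}$; no factor $1/(q;q)_i^{k+1}$ can emerge from fixed-$i$ asymptotics. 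Moreover the $q$-power of the ratio is $q^{-n/2}$, independent of $i$, so the $q^{-(k+1)i/2}$ you invoke is not there, and $\Delta_{2i}$ is not asymptotically $(1-q)^{-1}$ but equals $q^{-i}(1-q^{2i+1})/(1-q)$ at $A=q^{1/4}$. That $q^{-i}$ is the missing idea: it makes the minimal-degree contributions come from $i$ close to $n$, so one must reverse the summation index, $i\mapsto n-i$. After that substitution the factor $\bigl((q;q)_n/(q;q)_{n-i}\bigr)^{k+1}$ becomes $(q;q)_n^{k+1}/(q;q)_i^{k+1}$, which is simultaneously the source of the global prefactor $(q;q)_\infty^{k+1}$ and of the denominators $(q;q)_i^{k+1}$, while the relative weight $q^{-(n-i)}$ coming from $\Delta_{2(n-i)}$ normalizes to the $q^{i}$ in each summand; all remaining ratios are then genuinely $\doteq_n 1$. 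Note that in Theorem \ref{technical} the coefficients already carry positive powers $q^{\sum_j i_j(i_j/2+1)}$ and explicit factors $(q;q)_{i_k}^{-2}$, so there the stable terms sit at small indices and no reversal is needed; appealing to ``the same techniques'' here without the reindexing observation is exactly the point your argument (and, admittedly, the paper's terse final sentence) leaves unsupplied. Be careful as well that the paper's intermediate display writes the reciprocal ratio $\theta(n,n,2i)/\theta(2n,2n,2i)$; the Pochhammer part is harmless either way, but the sign and $q$-power bookkeeping needed for the reindexed sum depends on getting this straight.
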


\begin{proposition}
\label{side2}
For $k\geq 1$ we have
\begin{equation}
T_{L_k}= (q;q)_{\infty}^{k}\sum_{i_1=0}^{\infty}...\sum_{i_k=0}^{\infty}\frac{q^{\sum_{j=1}^k i_j+i_j^2+\sum_{s=2}^k \sum_{j=s}^k i_{s-1}i_j}}{\prod_{j=1}^k (q;q)_{i_j}  (q;q)_{\sum_{s=1}^j i_s} }
\end{equation}

\end{proposition}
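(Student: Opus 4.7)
The plan is to compute $T_{L_k}(q)$ by a second route that replaces the Masbaum--Vogel spin-network reduction used for Proposition~\ref{side1} with an iterated application of the bubble expansion formula, Theorem~\ref{main111}. The starting point is again Theorem~\ref{cody thm}: the tail of $L_k$ is captured by $S^{(n)}_B(L_k)/\Delta_n$ modulo $q^{n+1}$, and $S^{(n)}_B(L_k)$ is a closure in $\mathcal{S}(S^{2})$ of a cyclic concatenation of $k+1$ elementary bubbles $\mathcal{B}^{n,n}_{n,n}(n,n)$ with all outer strands coloured $f^{(n)}$ and inner strands coloured $f^{(2n)}$.

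First I would isolate one bubble from the cycle and apply Theorem~\ref{main111} to rewrite it as $\sum_{i_{1}=0}^{n}\left\lceil \begin{smallmatrix} n & n \\ n & n \end{smallmatrix}\right\rceil_{i_{1}}$ times a planar diagram whose inner edge is now coloured $i_{1}$ and whose remaining $k$ bubbles are untouched. Applying Theorem~\ref{main111} to the next bubble produces the coefficient $\left\lceil \begin{smallmatrix} n & i_{1} \\ n & n \end{smallmatrix}\right\rceil_{i_{2}}$ and replaces it by an edge coloured $i_{2}$. Iterating $k$ times yields nested sums over $i_{1},\ldots,i_{k}$ whose product of coefficients factors exactly in the form already analysed in Lemma~\ref{technical lemma}, multiplied by a final theta-type trivalent graph. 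That remaining graph evaluates in $\mathcal{S}(S^{2})$ in terms of $\Delta_{i_{k}}$ and powers of $\theta(n,n,i_{k})$, for which the explicit formula \eqref{special theta} is available.

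Next I would substitute the closed form for $\left\lceil \cdot \right\rceil$ displayed in the proof of Lemma~\ref{technical lemma}, divide by $\Delta_{n}$, and pass to the $n\to\infty$ limit in the $\doteq_{n}$ sense exactly as in the proof of Theorem~\ref{technical}. The three stabilisation identities used there, namely $(q;q)_{n}/(q;q)_{2n}\doteq_{n}1$, $(q;q)_{3n-i+1}/(q;q)_{2n+1}\doteq_{n}1$ and $(q;q)_{2n+i+1}/(q;q)_{n+i}\doteq_{n}1$, collapse every $n$-dependent Pochhammer quotient to $1$ modulo $q^{n+1}$, leaving a global prefactor $(q;q)_{\infty}^{k}$ together with $q$-powers and factors of $(q;q)_{i_{j}}$ and $(q;q)_{i_{j-1}-i_{j}}$ in the summand. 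A final reindexing $m_{j}=i_{j-1}-i_{j}$, equivalently $i_{j}=\sum_{s=j}^{k}m_{s}$ (after relabelling $m_{s}\mapsto i_{s}$), converts $\prod_{j}(q;q)_{i_{j-1}-i_{j}}$ into $\prod_{j}(q;q)_{i_{j}}$ and the partial sums $\sum_{s\le j}i_{s}$ into the second family of Pochhammer factors in the stated identity.

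The main obstacle is the exponent bookkeeping. Each $\left\lceil \cdot \right\rceil_{i_{j}}$ contributes a $q$-monomial of the form $q^{\alpha\, i_{j}^{2} + \beta\, i_{j} + \gamma\, i_{j-1}i_{j}}$ and the theta evaluation contributes further terms, so producing the precise exponent $\sum_{j=1}^{k}(i_{j}+i_{j}^{2})+\sum_{s=2}^{k}\sum_{j=s}^{k}i_{s-1}i_{j}$ after the change of variables requires care with the sign factors $(-1)^{\cdot}$ and with the additive pieces absorbed into the monomial after cancelling against $\Delta_{n}$. The structural steps, iterated bubble expansion followed by theta evaluation and then tail stabilisation, follow the template of the proofs of Theorem~\ref{technical} and Proposition~\ref{side1}, so this exponent reconciliation is the only genuinely new piece of work.
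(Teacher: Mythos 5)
There is a genuine structural gap: you have transplanted the wrong recursion. The coefficients $\left\lceil\begin{smallmatrix} n & i_{j-1}\\ n & n\end{smallmatrix}\right\rceil_{i_j}$, and with them Lemma \ref{technical lemma}, belong to the \emph{series} chain of bubbles that arises for the graph $\Phi_{k,u}$ (the pretzel knots $P(2k+1,2,2u+1)$), where collapsing one bubble produces an $i_{j-1}$-coloured edge that becomes a single leg of the next bubble. The skein element $S^{(n)}_B(L_k)$ has a different topology: the $k$ middle circles all attach to the same pair of hub strands, so after one application of Theorem \ref{main111} (and the isotopy/idempotent absorption step) the \emph{next} bubble has its four legs coloured $n-i_1,\,n-i_1,\,n+i_1,\,n$, not $n,\,i_1,\,n,\,n$. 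Iterating therefore produces the coefficients $\left\lceil\begin{smallmatrix} n-\sum_{s\le j} i_s & n-\sum_{s\le j} i_s\\ n+\sum_{s\le j} i_s & n\end{smallmatrix}\right\rceil_{i_{j+1}}$ with nested summation ranges $0\le i_{j+1}\le n-\sum_{s\le j} i_s$, as in the paper's equation \eqref{I hate you}, and these require the separate closed form \eqref{bubble special} rather than the formula analysed in Lemma \ref{technical lemma}. This is not mere bookkeeping: the partial-sum Pochhammer factors $(q;q)_{\sum_{s=1}^{j} i_s}$ in the stated identity come precisely from the $(q;q)_{a+i}$ and $(q;q)_{n-a-i}$ factors of \eqref{bubble special} with $a=\sum_{s\le j} i_s$. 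Your recursion, if carried out, would instead reproduce a $\Phi$-type series with denominators $(q;q)^2_{i_k}\prod_j(q;q)_{i_{j-1}-i_j}$, which after your reindexing $m_j=i_{j-1}-i_j$ has the shape of Theorem \ref{technical}'s $g$-function and cannot match the right-hand side of Proposition \ref{side2}.

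A second, related confusion: the "final theta-type graph evaluating in terms of $\Delta_{i_k}$ and powers of $\theta(n,n,i_k)$" belongs to the other proof, namely the Masbaum--Vogel route of Proposition \ref{side1} (where $\theta(2n,2n,2i)$ and $\theta(n,n,2i)$ appear via Lemma \ref{main lemma 1}). In the bubble-expansion route the fully collapsed diagram is a closed planar network that evaluates simply to $(\Delta_{2n})^2/\Delta_{n+\sum_{s=1}^{k-1} i_s}$; no theta coefficients enter. The stabilisation step you describe (dividing by $\Delta_n$ and using the $\doteq_n$ identities as in Theorem \ref{technical}) is indeed how the paper finishes, but it must be applied to the correct coefficients; as written, your argument would not produce the exponent $\sum_{j}(i_j+i_j^2)+\sum_{s=2}^{k}\sum_{j=s}^{k} i_{s-1}i_j$ nor the factors $(q;q)_{\sum_{s=1}^{j} i_s}$, so the proposal needs to be redone from the point where the second bubble is expanded.
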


\begin{proof}
We apply the bubble skein formula to obtain:
\begin{eqnarray*}
    \begin{minipage}[h]{0.25\linewidth}
        \vspace{0pt}
        \scalebox{0.07}{\includegraphics{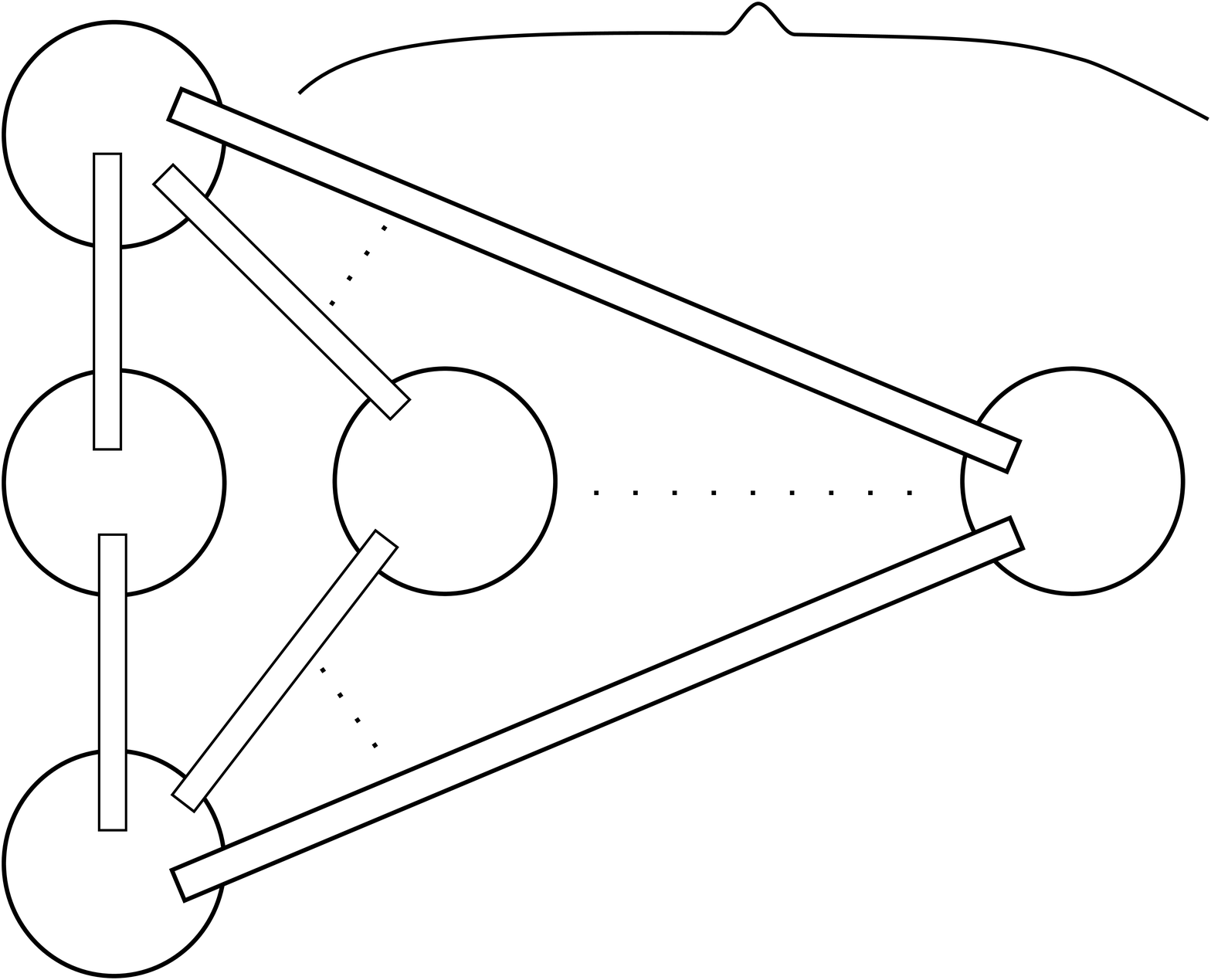}}
         \put(-30,64){\footnotesize{$n$}}
         \put(-132,59){\footnotesize{$n$}}
         \put(-130,21){\footnotesize{$n$}}
         \put(-70,62){\footnotesize{$n$}}
          \put(-100,44){\footnotesize{$n$}}
          \put(-130,88){\footnotesize{$n$}}
        \put(-60,102){\footnotesize{$k$ copies}}
   \end{minipage}
   &=&\sum\limits_{i=0}^{n}  \left\lceil 
\begin{array}{cc}
n & n \\ 
n & n%
\end{array}%
\right\rceil _{i_1}
\hspace{1 mm}  
   \begin{minipage}[h]{0.14\linewidth}
        \vspace{0pt}
        \scalebox{0.07}{\includegraphics{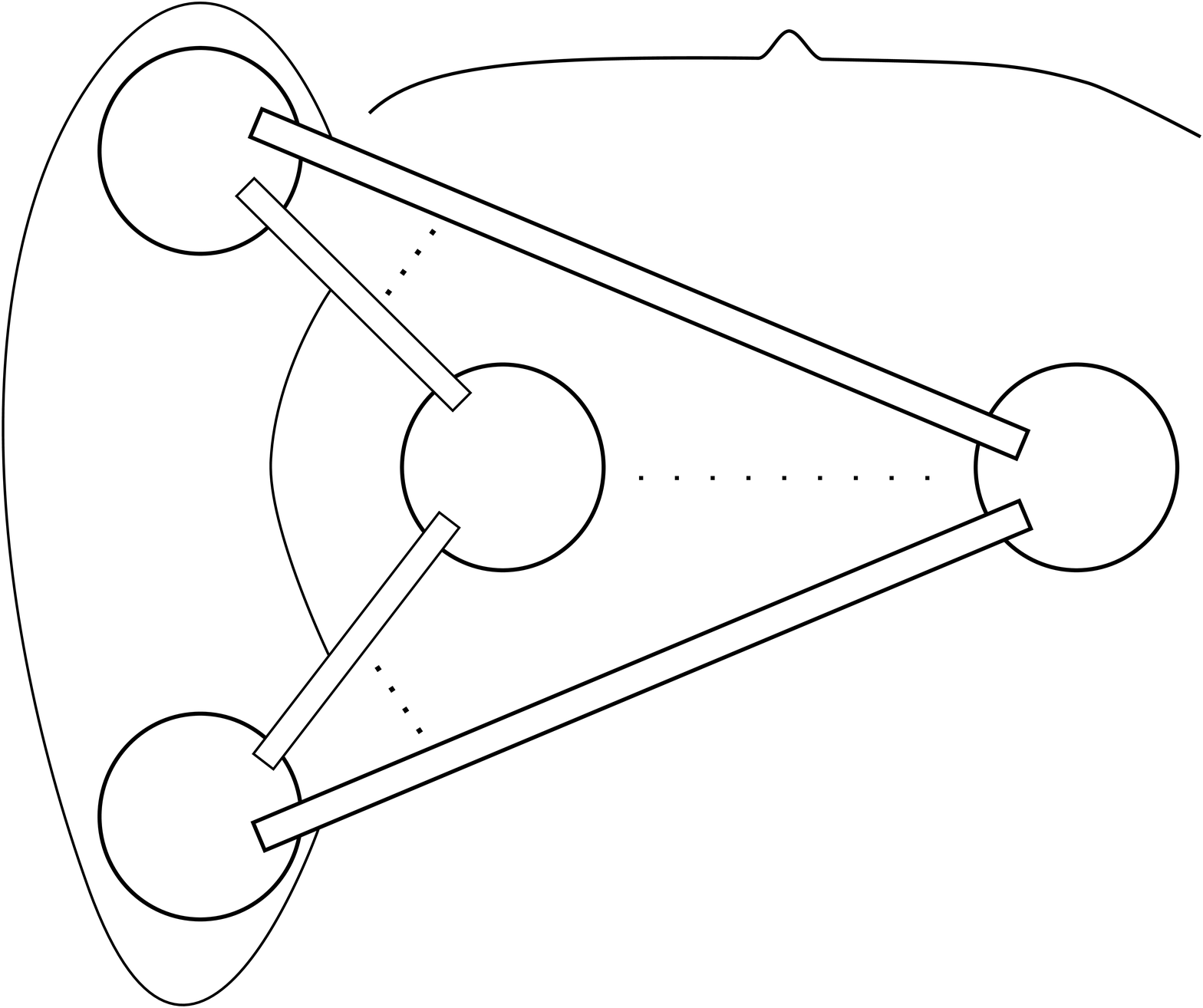}}
         \put(-17,40){\footnotesize{$n$}}
         \put(-132,59){\footnotesize{$i$}}
         \put(-130,33){\footnotesize{$n-i$}}
         \put(-65,65){\footnotesize{$n$}}
          \put(-102,55){\footnotesize{$i$}}
          \put(-130,78){\footnotesize{$n-i$}}
        \put(-60,115){\footnotesize{$k-1$ copies}}
   \end{minipage}\\&=&\sum\limits_{i=0}^{n}  \left\lceil 
\begin{array}{cc}
n & n \\ 
n & n%
\end{array}%
\right\rceil _{i_1}
\hspace{1 mm}  
  \begin{minipage}[h]{0.41\linewidth}
        \hspace{5.9pt}
        \scalebox{0.07}{\includegraphics{PRETZEL_SKEIN_1}}
         \put(-30,64){\footnotesize{$n+i$}}
         \put(-136,64){\footnotesize{$n+i$}}
         \put(-136,24){\footnotesize{$n-i$}}
         \put(-70,62){\footnotesize{$n$}}
          \put(-100,44){\footnotesize{$n$}}
          \put(-138,98){\footnotesize{$n-i$}}
        \put(-60,102){\footnotesize{$k-2$ copies}}
   \end{minipage}
   \end{eqnarray*}
The skein element in the last equation is obtained from the skein element in the first equation by isotopy of the strands and the properties of the Jones-Wenzl idempotent.
Similarly, we apply the bubble skein relation $(k-1)$ times on the skein element showing on the right handside of the previous equation to obtain

\begin{align}
\label{I hate you}
    \begin{minipage}[h]{0.28\linewidth}
        \vspace{0pt}
        \scalebox{0.07}{\includegraphics{PRETZEL_SKEIN_1}}
         \put(-30,64){\footnotesize{$n$}}
         \put(-132,59){\footnotesize{$n$}}
         \put(-130,21){\footnotesize{$n$}}
         \put(-70,62){\footnotesize{$n$}}
          \put(-100,44){\footnotesize{$n$}}
          \put(-130,88){\footnotesize{$n$}}
        \put(-60,102){\footnotesize{$k$ copies}}
   \end{minipage}
   &=\sum\limits_{i_1=0}^{n}\sum\limits_{i_2=0}^{n-i_1}...\sum\limits_{i_{k}=0}^{n-\sum_{l=1}^{k-1}i_l}  \left\lceil 
\begin{array}{cc}
n & n \\ 
n & n%
\end{array}%
\right\rceil _{i_1}\nonumber\\&\times\nonumber \prod_{j=1}^{k-1}\left\lceil 
\begin{array}{cc}
n-\sum_{s=1}^j i_s & n-\sum_{s=1}^j i_s \\ 
n+\sum_{s=1}^j i_s & n%
\end{array}%
\right\rceil _{i_{j+1}}
\hspace{1 mm}
   \begin{minipage}[h]{0.14\linewidth}
        \vspace{0pt}
        \scalebox{0.07}{\includegraphics{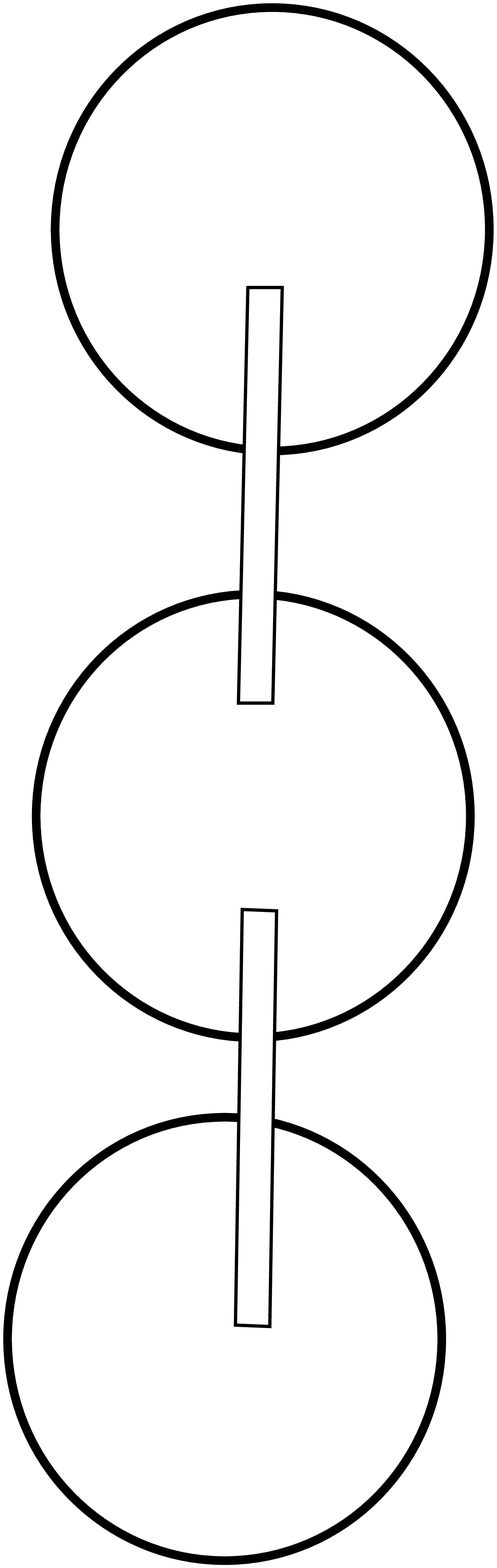}}
    \put(-30,86){\footnotesize{$n-\sum_{s=1}^{k-1} i_s$}}
     \put(-30,-16){\footnotesize{$n-\sum_{s=1}^{k-1} i_s$}}
     \put(-2,46){\footnotesize{$n+\sum_{s=1}^{k-1} i_s$}}    
   \end{minipage}\\&=\sum\limits_{i_1=0}^{n}\sum\limits_{i_2=0}^{n-i_1}...\sum\limits_{i_{k}=0}^{n-\sum_{l=1}^{k-1}i_l}  \left\lceil 
\begin{array}{cc}
n & n \\ 
n & n%
\end{array}%
\right\rceil _{i_1}\nonumber\\&\times \prod_{j=1}^{k-1}\left\lceil 
\begin{array}{cc}
n-\sum_{s=1}^j i_s & n-\sum_{s=1}^j i_s \\ 
n+\sum_{s=1}^j i_s & n%
\end{array}%
\right\rceil _{i_{j+1}}\frac{(\Delta_{2n})^2}{\Delta_{n+\sum_{s=1}^{k-1} i_s}}
   \end{align}

However,

\begin{equation}
\label{bubble special}
\left\lceil 
\begin{array}{cc}
n-a & n-a\\ 
n+a & n%
\end{array}%
\right\rceil _{i}=  \frac{(-1)^{i + n} q^{i/2 + a i + i^2 - n/2} (q, q)_{n}^3 (q, q)_{-a + n}^2 (q, q)_{
   a + n} (q, q)_{1 - a - i + 3 n}}{
(q, q)_i (q, q)_{a + i} (q, q)_{ 
  2 n}^2 (q, q)_{-i + n} (q, 
  q)_{-a - i + n}^2 (q, q)_{1 - a + 2 n}}
\end{equation}
Using equation (\ref{bubble special}) in \ref{I hate you} and using similar techniques to ones we used in Theorem \ref{technical} we obtain the result.
\end{proof}
Propositions \ref{side1} and \ref{side2} imply immediately the following
\begin{corollary}
For $k \geq 1$ we have
\begin{equation*}
(q;q)_{\infty} \sum_{i=0}^{\infty}  \frac{q^{i}}{(q;q)_i^{k+1}} = \sum_{i_1=0}^{\infty}...\sum_{i_k=0}^{\infty}\frac{q^{\sum_{j=1}^k i_j+i_j^2+\sum_{s=2}^k \sum_{j=s}^k i_{s-1}i_j}}{\prod_{j=1}^k (q;q)_{i_j}  (q;q)_{\sum_{s=1}^j i_s} }
\end{equation*}
\end{corollary}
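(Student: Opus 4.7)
The plan is to derive the claimed $q$-series identity as an immediate consequence of the two distinct evaluations of the tail $T_{L_k}(q)$ produced in Propositions \ref{side1} and \ref{side2}. These two propositions compute the same invariant, the tail of the colored Jones polynomial associated to the planar graph $L_k$, by two genuinely different skein-theoretic routes (the Masbaum--Vogel quantum spin network algorithm on one side, and iterated application of the bubble expansion formula \ref{bubble expansion formula121} on the other), so their outputs must agree as power series in $\mathbb{Z}[q^{-1}][[q]]$.

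Concretely, I would first invoke Proposition \ref{side1} to write
\begin{equation*}
T_{L_k}(q) \;=\; (q;q)_\infty^{k+1} \sum_{i=0}^\infty \frac{q^i}{(q;q)_i^{k+1}},
\end{equation*}
and then invoke Proposition \ref{side2} to write
\begin{equation*}
T_{L_k}(q) \;=\; (q;q)_\infty^{k} \sum_{i_1=0}^\infty \cdots \sum_{i_k=0}^\infty \frac{q^{\sum_{j=1}^k i_j + i_j^2 + \sum_{s=2}^k \sum_{j=s}^k i_{s-1} i_j}}{\prod_{j=1}^k (q;q)_{i_j}\,(q;q)_{\sum_{s=1}^j i_s}}.
\end{equation*}
Equating the two right-hand sides and dividing through by the common nonzero factor $(q;q)_\infty^{k}$ yields exactly the claimed identity.

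There is essentially no obstacle here beyond this bookkeeping: the content of the corollary is entirely contained in the agreement of Propositions \ref{side1} and \ref{side2}, and the only thing to verify is that the factor $(q;q)_\infty^k$ is common to both expressions and that it is a unit in the ring of formal power series where the identity is being asserted (which it is, since $(q;q)_\infty$ has constant term $1$). Thus the proof reduces to a one-line cancellation, and the real work has already been done in the two preceding propositions.
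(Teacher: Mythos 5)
Your proposal matches the paper's own argument: the corollary is stated there as an immediate consequence of Propositions \ref{side1} and \ref{side2}, obtained exactly by equating the two evaluations of $T_{L_k}(q)$ and cancelling the common factor $(q;q)_\infty^{k}$, which is invertible in $\mathbb{Z}[q^{-1}][[q]]$ since its constant term is $1$. Nothing further is needed; your one-line cancellation is precisely the intended proof.
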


\end{document}